\documentclass[12pt]{amsart}
\usepackage{amsmath, amssymb}

\numberwithin{equation}{section}
\theoremstyle{plain}
\newtheorem{theorem}{Theorem}[section]
\newtheorem{theoremintro}{Theorem}

\newtheorem{lemma}[theorem]{Lemma}
\newtheorem{prop}[theorem]{Proposition}
\newtheorem{corollary}[theorem]{Corollary}

\theoremstyle{definition}
\newtheorem{definition}[theorem]{Definition}
\newtheorem{remark}[theorem]{Remark}

\usepackage{amsmath}
\usepackage{amsfonts}
\usepackage{amssymb}
\usepackage{amscd}
\usepackage{color}
\usepackage{xcolor}
\usepackage{hyperref}
\usepackage{mathrsfs}
\usepackage{eucal}
\usepackage{upgreek}
\usepackage[makeroom]{cancel}
\usepackage[normalem]{ulem}
\usepackage{array}
\usepackage{verbatim}
\usepackage{mathtools}
\usepackage{stmaryrd}

\usepackage[inline]{enumitem}

\usepackage{xy}
\xyoption{all}
\usepackage{tikz-cd}

\newcommand{\nc}{\newcommand}

\newcommand{\Z}{\mathbb Z}
\newcommand{\Q}{\mathbb Q}

\renewcommand{\L}{\mathbb L}
\renewcommand{\P}{\mathbb P}

\newcommand{\kk}{\Bbbk}

\newcommand{\xra}{\xrightarrow}
\renewcommand{\le}{\leqslant}
\renewcommand{\ge}{\geqslant}

\newcommand{\bul}{\bullet}

\newcommand{\modd}{\mathrm{mod{-}}}
\newcommand{\moddo}{\mathrm{mod}_0{-}}

\DeclareMathOperator{\Hom}{\textup{Hom}}

\DeclareMathOperator{\Ext}{\textup{Ext}}

\DeclareMathOperator{\Pic}{\mathrm{Pic}}
\DeclareMathOperator{\End}{\mathrm{End}}

\DeclareMathOperator{\Spec}{\mathrm{Spec}}
\DeclareMathOperator{\har}{\mathrm{char}} 
\DeclareMathOperator{\Ob}{\mathrm{Ob}}

\DeclareMathOperator{\im}{\mathrm{im}}
\DeclareMathOperator{\Aut}{\mathrm{Aut}}
\DeclareMathOperator{\rank}{\mathrm{rank}}

\DeclareMathOperator{\coh}{\mathrm{coh}}

\DeclareMathOperator{\Db}{D^{\mathrm b}}

\DeclareMathOperator{\id}{\mathrm{id}}

\DeclareMathOperator{\Gal}{\mathrm{Gal}}

\DeclareMathOperator{\Iso}{\mathrm{Iso}}
\DeclareMathOperator{\GIso}{\mathrm{G{-}Iso}}
\DeclareMathOperator{\ev}{\mathrm{ev}}
\DeclareMathOperator{\tors}{\mathrm{tors}}
\DeclareMathOperator{\dP4}{\mathrm{dP4}}
\DeclareMathOperator{\GdP4}{\mathrm{G{-}dP4}}
\DeclareMathOperator{\TrCat}{\mathrm{TrCat}}
\DeclareMathOperator{\GTrCat}{\mathrm{G{-}TrCat}}
\DeclareMathOperator{\PGL}{\mathrm{PGL}}
\DeclareMathOperator{\numerical}{\mathrm{num}}

\nc{\cA}{{\mathcal{A}}}
\nc{\cB}{{\mathcal{B}}}
\nc{\cC}{{\mathcal{C}}}
\nc{\cD}{{\mathcal{D}}}
\nc{\cE}{{\mathcal{E}}}
\nc{\cF}{{\mathcal{F}}}
\nc{\cG}{{\mathcal{G}}}
\nc{\cH}{{\mathcal{H}}}
\nc{\cI}{{\mathcal{I}}}
\nc{\cJ}{{\mathcal{J}}}
\nc{\cK}{{\mathcal{K}}}
\nc{\cL}{{\mathcal{L}}}
\nc{\cM}{{\mathcal{M}}}
\nc{\cN}{{\mathcal{N}}}
\nc{\cO}{{\mathcal{O}}}
\nc{\cP}{{\mathcal{P}}}
\nc{\cQ}{{\mathcal{Q}}}
\nc{\cR}{{\mathcal{R}}}
\nc{\cS}{{\mathcal{S}}}
\nc{\cT}{{\mathcal{T}}}
\nc{\cU}{{\mathcal{U}}}
\nc{\cV}{{\mathcal{V}}}
\nc{\cW}{{\mathcal{W}}}
\nc{\cX}{{\mathcal{X}}}
\nc{\cY}{{\mathcal{Y}}}
\nc{\cZ}{{\mathcal{Z}}}

\nc{\rc}{{\mathrm{c}}}
\nc{\rd}{{\mathrm{d}}}
\nc{\rf}{{\mathrm{f}}}
\nc{\rh}{{\mathrm{h}}}
\nc{\rrm}{{\mathrm{m}}}
\nc{\rs}{{\mathrm{s}}}
\nc{\rch}{{\mathrm{ch}}}
\nc{\rtd}{{\mathrm{td}}}

\nc{\rA}{{\mathrm{A}}}
\nc{\rB}{{\mathrm{B}}}
\nc{\rC}{{\mathrm{C}}}
\nc{\rD}{{\mathrm{D}}}
\nc{\rE}{{\mathrm{E}}}
\nc{\rF}{{\mathrm{F}}}
\nc{\rG}{{\mathrm{G}}}
\nc{\rH}{{\mathrm{H}}}
\nc{\rI}{{\mathrm{I}}}
\nc{\rJ}{{\mathrm{J}}}
\nc{\rK}{{\mathrm{K}}}
\nc{\rL}{{\mathrm{L}}}
\nc{\rM}{{\mathrm{M}}}
\nc{\rN}{{\mathrm{N}}}
\nc{\rO}{{\mathrm{O}}}
\nc{\rP}{{\mathrm{P}}}
\nc{\rQ}{{\mathrm{Q}}}
\nc{\rR}{{\mathrm{R}}}
\nc{\rS}{{\mathrm{S}}}
\nc{\rT}{{\mathrm{T}}}
\nc{\rU}{{\mathrm{U}}}
\nc{\rV}{{\mathrm{V}}}
\nc{\rW}{{\mathrm{W}}}
\nc{\rX}{{\mathrm{X}}}
\nc{\rY}{{\mathrm{Y}}}
\nc{\rZ}{{\mathrm{Z}}}

\nc{\bA}{{\mathbf{A}}}
\nc{\bB}{{\mathbf{B}}}
\nc{\bC}{{\mathbf{C}}}
\nc{\bD}{{\mathbf{D}}}
\nc{\bE}{{\mathbf{E}}}
\nc{\bF}{{\mathbf{F}}}
\nc{\bG}{{\mathbf{G}}}
\nc{\bH}{{\mathbf{H}}}
\nc{\bI}{{\mathbf{I}}}
\nc{\bJ}{{\mathbf{J}}}
\nc{\bK}{{\mathbf{K}}}
\nc{\bL}{{\mathbf{L}}}
\nc{\bM}{{\mathbf{M}}}
\nc{\bN}{{\mathbf{N}}}
\nc{\bO}{{\mathbf{O}}}
\nc{\bP}{{\mathbf{P}}}
\nc{\bQ}{{\mathbf{Q}}}
\nc{\bR}{{\mathbf{R}}}
\nc{\bS}{{\mathbf{S}}}
\nc{\bT}{{\mathbf{T}}}
\nc{\bU}{{\mathbf{U}}}
\nc{\bV}{{\mathbf{V}}}
\nc{\bW}{{\mathbf{W}}}
\nc{\bX}{{\mathbf{X}}}
\nc{\bY}{{\mathbf{Y}}}
\nc{\bZ}{{\mathbf{Z}}}

\nc{\ba}{{\mathbf{a}}}
\nc{\bb}{{\mathbf{b}}}
\nc{\bc}{{\mathbf{c}}}
\nc{\bd}{{\mathbf{d}}}
\nc{\be}{{\mathbf{e}}}
\nc{\bg}{{\mathbf{g}}}
\nc{\bh}{{\mathbf{h}}}
\nc{\bi}{{\mathbf{i}}}
\nc{\bj}{{\mathbf{j}}}
\nc{\bk}{{\mathbf{k}}}
\nc{\bl}{{\mathbf{l}}}
\nc{\bm}{{\mathbf{m}}}
\nc{\bn}{{\mathbf{n}}}
\nc{\bo}{{\mathbf{o}}}
\nc{\bp}{{\mathbf{p}}}
\nc{\bq}{{\mathbf{q}}}
\nc{\br}{{\mathbf{r}}}
\nc{\bs}{{\mathbf{s}}}
\nc{\bt}{{\mathbf{t}}}
\nc{\bu}{{\mathbf{u}}}
\nc{\bv}{{\mathbf{v}}}
\nc{\bw}{{\mathbf{w}}}
\nc{\bx}{{\mathbf{x}}}
\nc{\by}{{\mathbf{y}}}
\nc{\bz}{{\mathbf{z}}}

\nc{\fA}{{\mathfrak{A}}}
\nc{\fB}{{\mathfrak{B}}}
\nc{\fC}{{\mathfrak{C}}}
\nc{\fD}{{\mathfrak{D}}}
\nc{\fE}{{\mathfrak{E}}}
\nc{\fF}{{\mathfrak{F}}}
\nc{\fG}{{\mathfrak{G}}}
\nc{\fH}{{\mathfrak{H}}}
\nc{\fI}{{\mathfrak{I}}}
\nc{\fJ}{{\mathfrak{J}}}
\nc{\fK}{{\mathfrak{K}}}
\nc{\fL}{{\mathfrak{L}}}
\nc{\fM}{{\mathfrak{M}}}
\nc{\fN}{{\mathfrak{N}}}
\nc{\fO}{{\mathfrak{O}}}
\nc{\fP}{{\mathfrak{P}}}
\nc{\fQ}{{\mathfrak{Q}}}
\nc{\fR}{{\mathfrak{R}}}
\nc{\fS}{{\mathfrak{S}}}
\nc{\fT}{{\mathfrak{T}}}
\nc{\fU}{{\mathfrak{U}}}
\nc{\fV}{{\mathfrak{V}}}
\nc{\fW}{{\mathfrak{W}}}
\nc{\fX}{{\mathfrak{X}}}
\nc{\fY}{{\mathfrak{Y}}}
\nc{\fZ}{{\mathfrak{Z}}}

\nc{\fa}{{\mathfrak{a}}}
\nc{\fb}{{\mathfrak{b}}}
\nc{\fc}{{\mathfrak{c}}}
\nc{\fd}{{\mathfrak{d}}}
\nc{\fe}{{\mathfrak{e}}}
\nc{\ff}{{\mathfrak{f}}}
\nc{\fg}{{\mathfrak{g}}}
\nc{\fh}{{\mathfrak{h}}}
\nc{\fj}{{\mathfrak{j}}}
\nc{\fk}{{\mathfrak{k}}}
\nc{\fl}{{\mathfrak{l}}}
\nc{\fm}{{\mathfrak{m}}}
\nc{\fn}{{\mathfrak{n}}}
\nc{\fo}{{\mathfrak{o}}}
\nc{\fp}{{\mathfrak{p}}}
\nc{\fq}{{\mathfrak{q}}}
\nc{\fr}{{\mathfrak{r}}}
\nc{\fs}{{\mathfrak{s}}}
\nc{\ft}{{\mathfrak{t}}}
\nc{\fu}{{\mathfrak{u}}}
\nc{\fv}{{\mathfrak{v}}}
\nc{\fw}{{\mathfrak{w}}}
\nc{\fx}{{\mathfrak{x}}}
\nc{\fy}{{\mathfrak{y}}}
\nc{\fz}{{\mathfrak{z}}}

\nc{\sA}{{\mathsf{A}}}
\nc{\sB}{{\mathsf{B}}}
\nc{\sC}{{\mathsf{C}}}
\nc{\sD}{{\mathsf{D}}}
\nc{\sE}{{\mathsf{E}}}
\nc{\sF}{{\mathsf{F}}}
\nc{\sG}{{\mathsf{G}}}
\nc{\sH}{{\mathsf{H}}}
\nc{\sI}{{\mathsf{I}}}
\nc{\sJ}{{\mathsf{J}}}
\nc{\sK}{{\mathsf{K}}}
\nc{\sL}{{\mathsf{L}}}
\nc{\sM}{{\mathsf{M}}}
\nc{\sN}{{\mathsf{N}}}
\nc{\sO}{{\mathsf{O}}}
\nc{\sP}{{\mathsf{P}}}
\nc{\sQ}{{\mathsf{Q}}}
\nc{\sR}{{\mathsf{R}}}
\nc{\sS}{{\mathsf{S}}}
\nc{\sT}{{\mathsf{T}}}
\nc{\sU}{{\mathsf{U}}}
\nc{\sV}{{\mathsf{V}}}
\nc{\sW}{{\mathsf{W}}}
\nc{\sX}{{\mathsf{X}}}
\nc{\sY}{{\mathsf{Y}}}
\nc{\sZ}{{\mathsf{Z}}}

\nc{\sa}{{\mathsf{a}}}
\nc{\sd}{{\mathsf{d}}}
\nc{\se}{{\mathsf{e}}}
\nc{\sg}{{\mathsf{g}}}
\nc{\sh}{{\mathsf{h}}}
\nc{\si}{{\mathsf{i}}}
\nc{\sj}{{\mathsf{j}}}
\nc{\sk}{{\mathsf{k}}}
\nc{\sm}{{\mathsf{m}}}
\nc{\sn}{{\mathsf{n}}}
\nc{\so}{{\mathsf{o}}}
\nc{\sq}{{\mathsf{q}}}
\nc{\sr}{{\mathsf{r}}}
\nc{\st}{{\mathsf{t}}}
\nc{\su}{{\mathsf{u}}}
\nc{\sv}{{\mathsf{v}}}
\nc{\sw}{{\mathsf{w}}}
\nc{\sx}{{\mathsf{x}}}
\nc{\sy}{{\mathsf{y}}}
\nc{\sz}{{\mathsf{z}}}

\nc{\oA}{{\overline{A}}}
\nc{\oB}{{\overline{B}}}
\nc{\oC}{{\overline{C}}}
\nc{\oD}{{\overline{D}}}
\nc{\oE}{{\overline{E}}}
\nc{\oF}{{\overline{F}}}
\nc{\oG}{{\overline{G}}}
\nc{\oH}{{\overline{H}}}
\nc{\oI}{{\overline{I}}}
\nc{\oJ}{{\overline{J}}}
\nc{\oK}{{\overline{K}}}
\nc{\oL}{{\overline{L}}}
\nc{\oM}{{\overline{M}}}
\nc{\oN}{{\overline{N}}}
\nc{\oO}{{\overline{O}}}
\nc{\oP}{{\overline{P}}}
\nc{\oQ}{{\overline{Q}}}
\nc{\oR}{{\overline{R}}}
\nc{\oS}{{\overline{S}}}
\nc{\oT}{{\overline{T}}}
\nc{\oU}{{\overline{U}}}
\nc{\oV}{{\overline{V}}}
\nc{\oW}{{\overline{W}}}
\nc{\oX}{{\overline{X}}}
\nc{\oY}{{\overline{Y}}}
\nc{\oZ}{{\overline{Z}}}

\nc{\oa}{{\overline{a}}}
\nc{\ob}{{\overline{b}}}
\nc{\oc}{{\overline{c}}}
\nc{\od}{{\overline{d}}}
\nc{\of}{{\overline{f}}}
\nc{\og}{{\overline{g}}}
\nc{\oh}{{\overline{h}}}
\nc{\oi}{{\overline{i}}}
\nc{\oj}{{\overline{j}}}
\nc{\ok}{{\overline{k}}}
\nc{\ol}{{\overline{l}}}
\nc{\om}{{\overline{m}}}
\nc{\on}{{\overline{n}}}
\nc{\oo}{{\overline{o}}}
\nc{\op}{{\overline{p}}}
\nc{\oq}{{\overline{q}}}
\nc{\os}{{\overline{s}}}
\nc{\ot}{{\overline{t}}}
\nc{\ou}{{\overline{u}}}
\nc{\ov}{{\overline{v}}}
\nc{\ow}{{\overline{w}}}
\nc{\ox}{{\overline{x}}}
\nc{\oy}{{\overline{y}}}
\nc{\oz}{{\overline{z}}}

\nc{\tA}{{\tilde{A}}}
\nc{\tB}{{\tilde{B}}}
\nc{\tC}{{\tilde{C}}}
\nc{\tD}{{\tilde{D}}}
\nc{\tE}{{\tilde{E}}}
\nc{\tF}{{\tilde{F}}}
\nc{\tG}{{\tilde{G}}}
\nc{\tH}{{\tilde{H}}}
\nc{\tI}{{\tilde{I}}}
\nc{\tJ}{{\tilde{J}}}
\nc{\tK}{{\tilde{K}}}
\nc{\tL}{{\tilde{L}}}
\nc{\tM}{{\tilde{M}}}
\nc{\tN}{{\tilde{N}}}
\nc{\tO}{{\tilde{O}}}
\nc{\tP}{{\tilde{P}}}
\nc{\tQ}{{\tilde{Q}}}
\nc{\tR}{{\tilde{R}}}
\nc{\tS}{{\tilde{S}}}
\nc{\tT}{{\tilde{T}}}
\nc{\tU}{{\tilde{U}}}
\nc{\tV}{{\tilde{V}}}
\nc{\tW}{{\tilde{W}}}
\nc{\tX}{{\tilde{X}}}
\nc{\tY}{{\tilde{Y}}}
\nc{\tZ}{{\tilde{Z}}}

\nc{\tfD}{{\tilde{\fD}}}
\nc{\tcA}{{\tilde{\cA}}}
\nc{\tcB}{{\tilde{\cB}}}
\nc{\tcC}{{\tilde{\cC}}}
\nc{\tcD}{{\tilde{\cD}}}
\nc{\tcE}{{\tilde{\cE}}}
\nc{\tcF}{{\tilde{\cF}}}
\nc{\tcM}{{\tilde{\cM}}}
\nc{\tcP}{{\tilde{\cP}}}
\nc{\tcT}{{\tilde{\cT}}}

\nc{\ta}{{\tilde{a}}}
\nc{\tb}{{\tilde{b}}}
\nc{\tc}{{\tilde{c}}}
\nc{\td}{{\tilde{d}}}
\nc{\te}{{\tilde{e}}}
\nc{\tf}{{\tilde{f}}}
\nc{\tg}{{\tilde{g}}}
\nc{\ti}{{\tilde{\imath}}}
\nc{\tj}{{\tilde{j}}}
\nc{\tk}{{\tilde{k}}}
\nc{\tl}{{\tilde{l}}}
\nc{\tm}{{\tilde{m}}}
\nc{\tn}{{\tilde{n}}}
\nc{\tp}{{\tilde{p}}}
\nc{\tq}{{\tilde{q}}}
\nc{\tr}{{\tilde{r}}}
\nc{\ts}{{\tilde{s}}}
\nc{\tu}{{\tilde{u}}}
\nc{\tv}{{\tilde{v}}}
\nc{\tw}{{\tilde{w}}}
\nc{\tx}{{\tilde{x}}}
\nc{\ty}{{\tilde{y}}}
\nc{\tz}{{\tilde{z}}}

\nc{\hA}{{\hat{A}}}
\nc{\hB}{{\hat{B}}}
\nc{\hC}{{\hat{C}}}
\nc{\hD}{{\hat{D}}}
\nc{\hE}{{\hat{E}}}
\nc{\hF}{{\hat{F}}}
\nc{\hG}{{\hat{G}}}
\nc{\hH}{{\hat{H}}}
\nc{\hI}{{\hat{I}}}
\nc{\hJ}{{\hat{J}}}
\nc{\hK}{{\hat{K}}}
\nc{\hL}{{\hat{L}}}
\nc{\hM}{{\hat{M}}}
\nc{\hN}{{\hat{N}}}
\nc{\hO}{{\hat{O}}}
\nc{\hP}{{\hat{P}}}
\nc{\hQ}{{\hat{Q}}}
\nc{\hR}{{\hat{R}}}
\nc{\hS}{{\hat{S}}}
\nc{\hT}{{\hat{T}}}
\nc{\hU}{{\hat{U}}}
\nc{\hV}{{\hat{V}}}
\nc{\hW}{{\hat{W}}}
\nc{\hX}{{\widehat{X}}}
\nc{\hY}{{\hat{Y}}}
\nc{\hZ}{{\hat{Z}}}

\nc{\ha}{{\hat{a}}}
\nc{\hb}{{\hat{b}}}
\nc{\hc}{{\hat{c}}}
\nc{\hd}{{\hat{d}}}
\nc{\he}{{\hat{e}}}
\nc{\hg}{{\hat{g}}}
\nc{\hh}{{\hat{h}}}
\nc{\hi}{{\hat{i}}}
\nc{\hj}{{\hat{j}}}
\nc{\hk}{{\hat{k}}}
\nc{\hl}{{\hat{l}}}
\nc{\hm}{{\hat{m}}}
\nc{\hn}{{\hat{n}}}
\nc{\ho}{{\hat{o}}}
\nc{\hp}{{\hat{p}}}
\nc{\hq}{{\hat{q}}}
\nc{\hr}{{\hat{r}}}
\nc{\hs}{{\hat{s}}}
\nc{\hu}{{\hat{u}}}
\nc{\hv}{{\hat{v}}}
\nc{\hw}{{\hat{w}}}
\nc{\hx}{{\hat{x}}}
\nc{\hy}{{\hat{y}}}
\nc{\hz}{{\hat{z}}}

\nc{\hcC}{{\widehat{\cC}}}
\nc{\hcT}{{\widehat{\cT}}}

\author{Alexey Elagin}
\address{School of Mathematical and Physical Sciences, University of Sheffield, S3 7RH, Sheffield, UK}
\email{alexey.elagin@gmail.com}

\title{A categorical Torelli theorem for quartic del Pezzo surfaces}

\begin{document}

\begin{abstract}
We solve categorical Torelli problem for quartic del Pezzo surfaces. That is, we prove that a del Pezzo surface of degree $4$ can be canonically reconstructed from its Kuznetsov component, which is the orthogonal subcategory to the structure sheaf in the derived category of the surface. Our methods work in equivariant setting and over arbitrary perfect fields. Using recent theory of atomic semi-orthogonal decompositions~\cite{ElaginSchneiderShinder},  we conclude that two minimal quartic del Pezzo surfaces are birational if and only if they are isomorphic. We also verify that the Kuznetsov component of a minimal quartic del Pezzo surface is semi-orthogonally indecomposable, confirming a conjecture by Auel and Bernardara~\cite{AuelBernardara}.
\end{abstract}

\maketitle

\tableofcontents

\section{Introduction}

The bounded derived category of coherent sheaves $\Db(X)$ on an algebraic variety $X$ is a fundamental invariant of $X$, whose systematic study as such dates back to the 1980s. 
The category $\Db(X)$ encodes a substantial amount of geometric information about $X$. In particular, by~\cite{BondalOrlov2001_Reconstruction}, a smooth projective variety $X$ can be reconstructed from $\Db(X)$ provided that the canonical divisor $K_X$ is either ample or anti-ample. Nevertheless, in general, non-isomorphic varieties may have equivalent derived categories; see, for example,~\cite{Mukai1981_Duality}.

In this note we address the problem of reconstructing a variety $X$ not from the entire derived category $\Db(X)$, but from a certain admissible subcategory $\bA_X \subset \Db(X)$, commonly referred to in the literature as the \emph{residual component} or the \emph{Kuznetsov component}. Although there is no uniform definition of this component, it is known to exist for many Fano varieties and is believed to be the “essential part’’ of the derived category.

The following question is known as the \emph{categorical Torelli problem}: 
\begin{quote}
Given two varieties $X$ and $Y$ such that the categories $\bA_X$ and $\bA_Y$ are equivalent, is it true that $X$ and $Y$ are isomorphic?    
\end{quote}
Significant progress on this problem has been achieved in recent years. In particular, an affirmative answer has been obtained for many Fano hypersurfaces~\cite{LinRennemoZhang2024,LinZhang2025_Serre-algebra-matrix-factorisation,Pirozhkov2024_CategoricalTorelli}, for certain Fano threefolds~\cite{JacovskisLiuZhang2022, JacovskisLinLiuZhang2024_CategoricallTorelliGushelMukai}, and for Enriques surfaces~\cite{LiNuerStellariZhao2021_RefinedDerivedTorelli,LiStellariZhao2022_RefinedDerivedTorelli}. We refer to~\cite{PertusiStellari2023_CategoricalTorelliTheorems} for a survey of these and some other related results.

In this note we solve the categorical Torelli problem for del Pezzo surfaces of degree $4$. Note that for del Pezzo surfaces of degree $\ge 5$ there is no common understanding of what the Kuznetsov component is; moreover, these surfaces have no moduli, so the categorical Torelli problem in this case is not particularly meaningful. Del Pezzo surfaces of degree $3$ are cubic hypersurfaces in $\P^3$, and, over algebraically closed fields, the categorical Torelli problem admits an affirmative solution by the general results mentioned above, see~\cite[Cor.\ 1.6]{LinZhang2025_Serre-algebra-matrix-factorisation}. We will address the categorical Torelli problem for del Pezzo surfaces of degrees $2$ and $1$ in a subsequent work.

By definition, the Kuznetsov component of a quartic del Pezzo surface $X$ is the right orthogonal subcategory $\bA_X = \cO_X^\perp$ to the structure sheaf in the bounded derived category of coherent sheaves on $X$. 

Assume first that the base field is algebraically closed. In this case, there is a description of $\bA_X$ due to Bondal--Orlov and Kuznetsov~\cite{BondalOrlov2002_ICM, Kuznetsov2008_Derived-categories-of-quadric-fibrations}: the category $\bA_X$ is equivalent to the bounded derived category of coherent sheaves on a weighted projective line $\P_X$ canonically associated with $X$. This weighted projective line is given by the pencil of quadrics in $\P^4$ containing the anticanonical image of $X$, it carries five weighted points of weight $2$, corresponding to the degenerate quadrics in the pencil.

The facts that the category $\bA_X \cong \Db(\coh \P_X)$ determines the weighted projective line $\P_X$ up to isomorphism, and that $\P_X$ in turn determines $X$ up to isomorphism, are likely known to experts and/or can be obtained by standard technique. What appears to be new in our approach is that the reconstruction of $X$ from $\bA_X$ can be carried out in a \emph{canonical} manner: to any equivalence of components $\bA_X \xra{\sim} \bA_Y$ we can naturally associate an isomorphism of surfaces $X \xra{\sim} Y$.

To make precise what we mean by “naturally associate” such an isomorphism, we invoke the notion of a \emph{heavily separable} functor \cite{ArdizzoniMenini}. A functor $\Phi \colon \bC \to \bD$ is called heavily separable (Definition~\ref{def_hseparable}) if, for all objects $X,Y \in \bC$, there exist maps
\[
\Hom(\Phi(X),\Phi(Y)) \longrightarrow \Hom(X,Y)
\]
that are left inverses to the maps
\[
\Hom(X,Y) \longrightarrow \Hom(\Phi(X),\Phi(Y))
\]
induced by $\Phi$, and that are compatible with composition.

Our first main result is the following theorem (which remains of independent interest even in the case where the group $G$ is trivial). We emphasize that we do not impose any $\kk$-linearity assumptions on the group actions, isomorphisms, or equivalences under consideration.
\begin{theoremintro}[Theorem \ref{th_mainG}]
\label{theorem_introA}
Let $\kk$ be an algebraically closed field of $\har\ne 2$ and $G$ be a group. Then the natural functor from the  category of $G$-del Pezzo surfaces of degree $4$ over $\kk$ and $G$-isomorphisms to the category of triangulated $\kk$-linear categories with a $G$-action and $G$-equivalences, sending a surface $X$ to its Kuznetsov component $\bA_X$, is heavily separable. In particular, if for two such $G$-surfaces $X,Y$ the categories $\bA_X,\bA_Y$ are $G$-equivalent, then $X,Y$ are $G$-isomorphic.  
\end{theoremintro}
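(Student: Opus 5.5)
The plan is to build, for any equivalence $\Phi\colon \bA_X \xra{\sim} \bA_Y$ of Kuznetsov components, a canonical isomorphism of surfaces $r(\Phi)\colon X\xra{\sim} Y$. We will make $r$ compatible with composition and with the functor $\bA$, and then equip everything with $G$-actions. We will reconstruct $X$ from $\bA_X$ intrinsically, by identifying inside $\bA_X$ a canonical collection of objects, up to shift, together with the Hom-spaces between them. These data will recover the pencil of quadrics and hence $X$.

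Concretely, we first reconstruct $\P_X$. Using the equivalence $\bA_X\cong \Db(\coh\P_X)$ for an orbifold line with five points of weight $2$, we characterise purely categorically, in terms of Hom and Serre-functor data only, the objects corresponding to skyscraper sheaves at non-orbifold points and to the simple torsion sheaves at the orbifold points. The Serre functor $S$ of $\bA_X$ is intrinsic and $S[-1]$ corresponds to twisting by the dualizing sheaf. A point object $P$ is then characterised by $S(P)\cong P[1]$, $\Hom(P,P)=\kk$ and $\Ext^1(P,P)=\kk$. The orbifold simple objects $E$ are characterised by being exceptional with $S^2(E)\cong E[2]$. These characterisations are Serre-invariant and therefore respected by any equivalence, $\kk$-linear or not. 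Since an equivalence carries the shift orbits of such objects bijectively onto each other, it gives a bijection of the sets of points, and we will upgrade this to an isomorphism of curves $\P_X\to\P_Y$ preserving the five marked points. To do this we recover the coordinate ring as a ring: the line bundles are characterised up to shift as the objects $L$ that are orthogonal-spherical to point objects, meaning $\Hom^\bullet(L,P)$ is one-dimensional for every point object $P$. We fix $\cL$ corresponding to $\cO(\text{degree } 1)$, intrinsically up to the finite ambiguity given by the Picard group action. Then $\bigoplus_n \Hom(\cL, S^{-n}[n]\cL)$, or an analogous graded ring built from Serre powers, gives the homogeneous coordinate ring of $\P_X$, equivalently of its coarse $\P^1$ with the divisor of five points.

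Second, we reconstruct $X$ from $\P_X$. The five marked points are the degenerate quadrics $\det(sQ_1+tQ_2)=0$. The double cover of $\P_X$ branched at the five points together with $\infty$ is not quite canonical, so instead we use the standard normal form: over $\bar\kk$ with $\har\ne2$, the pencil can be diagonalised as $\sum x_i^2$ and $\sum a_i x_i^2$, and $X$ is determined by the five points $a_i\in\P^1$. Better, we will do this canonically: $X$ is the intersection of quadrics in $\P(V)$, where $V=\bigoplus_{i} \ell_i$ and $\ell_i$ is the line associated to the $i$-th orbifold point. Categorically this should be $V^\vee \cong \bigoplus_i \Hom(\cO,E_i)$ for suitable normalised objects. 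Each functorially constructed isomorphism must be shown to be independent of choices, for example of $\cL$ within its Picard orbit and of the scalars of the weights $\pm1$ on the $\ell_i$. This holds because the resulting ambiguities lie in $(\Z/2)^4$, which acts trivially on the pencil-level data and hence on $X$ only through automorphisms that we quotient out canonically.

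Finally, functoriality: $r(\Phi\circ\Psi)=r(\Phi)r(\Psi)$, and $r(\bA_f)=f$ for an isomorphism $f$, which holds because $\bA_f$ acts on points by $f$. Since $r$ is constructed from intrinsic data, the $G$-case follows: a $G$-equivalence produces $G$-compatible isomorphisms by the same canonicity. I expect the main obstacle to be the canonicity issue in the second step. The isomorphism $X\to Y$ must be pinned down uniquely, not merely shown to exist, even though $\Aut(X)\supset(\Z/2)^4$ acts trivially on $\P_X$ and hence invisibly on the data recovered above. To handle this I would try to recover $X$ directly as a moduli space of objects in $\bA_X$: namely, the projections to $\bA_X$ of the skyscrapers $\cO_x$, characterised as objects with a fixed numerical class and $\Hom=\kk$, whose moduli space is then identified with $X$. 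This uniquely determines the map on points, hence the isomorphism.
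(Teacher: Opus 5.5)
Your first step is fine and matches the paper. After a shift, an equivalence restricts to the hearts (Lemma~\ref{lemma_AutDbCohP}). $\P_X$ with its five marked points is then recovered from $\tau$-orbits of simple objects (Lemma~\ref{lemma_new}). Incidentally, the coordinate ring should be $\bigoplus_n\Hom(\cL,\tau^{2n}\cL)$, using $2\bar\omega=\oc$, rather than $\bigoplus_n\Hom(\cL,S^{-n}\cL[n])$. Together with Lemma~\ref{lemma_diagonal} this gives injectivity on isomorphism classes.

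The gap is the second step, which carries all the content. The isomorphisms $X\xra{\sim}Y$ over a fixed $\P_X\xra{\sim}\P_Y$ form a torsor under $\Aut^0(X)\cong(\Z/2\Z)^4$, which acts faithfully on $X$. So nothing can be ``quotiented out canonically'': heavy separability requires choosing one element of this torsor for each equivalence, compatibly with composition, and $\P_X$ does not see that choice.

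Your moduli-space repair fails as stated, because a numerical class is not an invariant of $\bA_X$. Take $\Phi=S_{\bA_X}$ and the projections $I_x[1]$ of the skyscrapers $\cO_x$. Since $\chi(I_x,I_x)=-1$, while $\chi(v,v)\ge 0$ for classes of torsion sheaves on $\P_X$, the object $I_x[1]$ is a shifted sheaf of non-zero rank on $\P_X$. Then $\tau$, which is the twist by $\bar\omega$ of degree $\tfrac12$, changes its degree, so $[S_{\bA_X}(I_x[1])]\ne\pm[I_x[1]]$. Hence you would have to show that every equivalence, after composing with a suitable power of the Serre functor and a shift, carries the point family of $X$ onto that of $Y$. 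That is a statement about the whole group $\Aut(\bA_X)$, which contains the infinite twist group $\L$, and your proposal does not address it.

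That group-theoretic statement is the missing idea. The paper reduces, via Lemma~\ref{lemma_hseparable}, to injectivity on isomorphism classes plus a left-inverse homomorphism for $\Aut(X)\to\Aut(\bA_X)$ (Lemma~\ref{lemma_main}). It works as follows:
\begin{itemize}
\item Using $\Aut(\bA_X)\cong\Aut(\cP_X)\times\Z$, it lets $\Aut(\cP_X)$ act on $\P_X$ and on the ten $\tau$-periodic simple objects, which are exactly the $\cO_X(-h)$, $h\in H_X$ (Lemma~\ref{lemma_HH}). This gives $\beta$ into the $B_5$-type group $\Aut(H_X,Q_X)\times_{\Aut(Q_X)}\Aut(\P_X)$.
\item The map $\alpha$ identifies $\Aut(X)$ with the $D_5$-type group $\Aut^{\ev}(H_X,Q_X)\times_{\Aut(Q_X)}\Aut(\P_X)$ (Lemma~\ref{lemma_AutX}, via $\Aut^0(X)\cong\Aut^{\ev}(H_X/Q_X)$ and Lemma~\ref{lemma_diagonal}).
\item $B_5=D_5\times\langle c\rangle$, where $c$ is the central element swapping each pair (Lemma~\ref{lemma_BDsplit}). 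This $c$ is exactly the action of the Serre functor, since $S_{\bA_X}\cO(-h)\cong\cO(-h')[1]$.
\end{itemize}
Combining these facts one even obtains $\Aut(\bA_X)=\Aut(X)\times\langle S_{\bA_X}\rangle\times\langle[1]\rangle$. So each equivalence $\bA_X\xra{\sim}\bA_Y$ is uniquely of the form $f_\bA\circ S_{\bA_X}^a\circ[b]$, and one sets $\Psi(\Phi)=f$. This is precisely the normalisation your moduli argument needs, and once it is known the algebraicity of the induced map on points is automatic.

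Your passage from the trivial-group case to the $G$-equivariant case via compatibility with composition is correct and is what the paper does.
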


In contrast to most results on the categorical Torelli problem, we do not restrict to algebraically closed fields.
Using Theorem~\ref{theorem_introA} and  Galois descent, we deduce that the reconstruction of $X$ from $\bA_X$ is possible for del Pezzo surfaces of degree $4$ over an arbitrary perfect field.

\begin{theoremintro}[Theorem \ref{theorem_main-perfect}]
\label{theorem_introB}
Let $\kk$ be a perfect field of $\har\ne 2$. Let $X,Y$ be two del Pezzo surfaces of degree $4$ over $\kk$. If there is a $\kk$-linear equivalence $\bA_X\xra{\sim}\bA_Y$ of Fourier--Mukai type, then the surfaces $X,Y$ are isomorphic over $\kk$.  
\end{theoremintro}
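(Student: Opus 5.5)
We reduce Theorem~\ref{theorem_introB} to Theorem~\ref{theorem_introA} by Galois descent, with $G=\Gal(\bar\kk/\kk)$. Since $\kk$ is perfect, $\bar\kk/\kk$ is Galois. Put $\bar X=X\times_\kk\bar\kk$ and $\bar Y=Y\times_\kk\bar\kk$. These are del Pezzo surfaces of degree $4$ over $\bar\kk$, and $G$ acts on them by semilinear automorphisms coming from the second factor. This makes $\bar X,\bar Y$ into $G$-del Pezzo surfaces, with an action that is not $\bar\kk$-linear. This is exactly why Theorem~\ref{theorem_introA} allows non-linear actions.

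The induced actions on $\Db(\bar X)$ and $\Db(\bar Y)$ are by pullback along the $g\in G$. They preserve $\cO$, so they restrict to $G$-actions on $\bA_{\bar X}$ and $\bA_{\bar Y}$. Here one also has to check that base change is compatible with orthogonals, i.e. that $\bA_{\bar X}$ is the orthogonal of $\cO_{\bar X}$ in $\Db(\bar X)$.

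Next, let $\Phi\colon\bA_X\to\bA_Y$ be the given equivalence of Fourier--Mukai type. It is the restriction of a functor $\Phi_K$ with kernel $K\in\Db(X\times_\kk Y)$, which we may replace by the composite with the projection onto $\bA_Y$. Base-changing the kernel gives $\bar K\in\Db(\bar X\times_{\bar\kk}\bar Y)$, and $\Phi_{\bar K}$ restricts to a functor $\bar\Phi\colon\bA_{\bar X}\to\bA_{\bar Y}$. Because $\bar K$ is pulled back from $\kk$, it carries canonical isomorphisms $(g\times g)^*\bar K\cong\bar K$ satisfying the cocycle condition. By base change for Fourier--Mukai transforms, these give $\bar\Phi$ the structure of a $G$-equivariant functor.

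We must also check that $\bar\Phi$ is an equivalence. Let $\Psi$ be a quasi-inverse of $\Phi$; it is the restriction of a Fourier--Mukai functor, for instance via adjoint kernels, since adjoints of FM functors are FM. The unit and counit are then morphisms of kernels, and base change preserves their being isomorphisms on $\bA$. The cleanest way to argue this is to work with the kernels of the identity functors of $\bA_X$ and $\bA_Y$ (the projector kernels) and to use that the base change of kernels is faithful on isomorphisms. This yields a $G$-equivalence $\bA_{\bar X}\simeq\bA_{\bar Y}$.

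Theorem~\ref{theorem_introA}, applied to $G=\Gal(\bar\kk/\kk)$ and $\bar\kk$ of characteristic $\ne 2$, now gives a $G$-isomorphism $f\colon\bar X\to\bar Y$, i.e. one commuting with the Galois actions. Here $f$ is a morphism of schemes and need not be $\bar\kk$-linear. Two points remain:
\begin{enumerate}
\item \emph{Linearity of $f$.} We argue that $f$ is $\bar\kk$-linear because it is constructed from the $\bar\kk$-linear equivalence $\bar\Phi$; for this we need to know that the canonical reconstruction in Theorem~\ref{th_mainG} respects the structure map to $\Spec\bar\kk$. Alternatively, $f$ covers some automorphism $\sigma$ of $\bar\kk$ commuting with $G$, and $\sigma$ is recovered functorially from the equivalence, so it is the identity.
\item \emph{Descent.} Galois descent for morphisms of quasi-projective varieties over the perfect field $\kk$ then shows that the $G$-equivariant $\bar\kk$-isomorphism $f$ descends to an isomorphism $X\xra{\sim}Y$ over $\kk$.
\end{enumerate}

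The main obstacle is making the equivariance of $\bar\Phi$ precise in the framework of $G$-equivalences used in Theorem~\ref{th_mainG}. This requires coherent $2$-isomorphisms $g^*\circ\bar\Phi\cong\bar\Phi\circ g^*$, which we obtain from the canonical descent datum on $\bar K$ and compatibility of Fourier--Mukai transforms with flat base change along semilinear automorphisms. Continuity of the Galois action does not matter here, since Theorem~\ref{theorem_introA} holds for an arbitrary abstract group $G$.
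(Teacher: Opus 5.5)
Your proposal is correct and follows the paper's proof: you base-change the Fourier--Mukai kernel to get a $\bar\kk$-linear $G$-equivariant equivalence $\bA_{\oX}\xra{\sim}\bA_{\oY}$, apply Theorem~\ref{th_mainG}, and descend (the paper just cites \cite[Section 6]{Kuznetsov_BaseChange} for the base-changed kernel still being an equivalence, which your adjoint/projector-kernel sketch reproves). Your linearity concern in point (1) is settled by the $\kk$-linear part of Theorem~\ref{th_mainG}, i.e.\ heavy separability on $\kk$-linear $G$-isomorphisms and $\kk$-linear $G$-equivalences, so the alternative argument via an automorphism $\sigma$ of $\bar\kk$ is not needed.
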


In particular, our results apply to the case when $X$ is a \emph{minimal} del Pezzo surface over a perfect field $\kk$: that is, when $\Pic X$ has rank $1$. The subcategory $\bA_X$ in this case was studied in~\cite{AuelBernardara} as the \emph{Griffiths--Kuznetsov component} of $X$. In a recent work~\cite{ElaginSchneiderShinder}, the subcategory $\bA_X$ was called the \emph{atom} of $X$ and was shown to be a birational invariant of minimal del Pezzo  surfaces of degree~$4$. Moreover, it was shown that any surface birational to $X$ has a copy of $\bA_X$ as an admissible subcategory in its derived category. Combining this with Theorem~\ref{theorem_introB}, we get the following observation:
\begin{theoremintro}[Theorem \ref{th_birational-perfect}, see also~{\cite[Th. 5.5]{ElaginSchneiderShinder}}]
\label{theorem_introC}
Let $\kk$ be a perfect field of $\har \ne 2$, and let $X,Y$ be minimal del Pezzo surfaces of degree $4$ over $\kk$. 
Then the following conditions are equivalent:
\begin{enumerate}[label=(\alph*)]
    \item the surfaces $X$ and $Y$ are  isomorphic over $\kk$;
    \item the surfaces $X$ and $Y$ are  birational over $\kk$;
    \item the categories $\bA_X$ and $\bA_Y$ are $\kk$-linearly equivalent via an equivalence of Fourier--Mukai type.
\end{enumerate}
\end{theoremintro}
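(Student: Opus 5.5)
The plan is to prove the cycle of implications (a)$\Rightarrow$(c)$\Rightarrow$(a), and separately (a)$\Leftrightarrow$(b), using Theorem~\ref{theorem_introB} together with the atomic theory of~\cite{ElaginSchneiderShinder}.

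\textbf{(a)$\Rightarrow$(b) and (a)$\Rightarrow$(c).} If $f\colon X\to Y$ is an isomorphism over $\kk$, then $f$ is in particular birational. The pushforward $f_*\colon \Db(X)\to\Db(Y)$ is a $\kk$-linear Fourier--Mukai equivalence, with kernel the structure sheaf of the graph. It sends $\cO_X$ to $\cO_Y$, so it restricts to an equivalence $\bA_X=\cO_X^\perp\xra{\sim}\cO_Y^\perp=\bA_Y$. This restriction is again of Fourier--Mukai type, since it is the composite of the embedding, $f_*$, and the projection functor, each of which is of Fourier--Mukai type.

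\textbf{(c)$\Rightarrow$(a).} This is exactly Theorem~\ref{theorem_introB}, which applies to all quartic del Pezzo surfaces over a perfect field of characteristic $\ne2$.

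\textbf{(b)$\Rightarrow$(c).} This is the heart of the argument, and I expect it to be the main obstacle. The intended source is~\cite{ElaginSchneiderShinder}. Because $X$ is minimal, $\bA_X$ is an atom, i.e.\ a birational invariant. Moreover, for any smooth projective surface $Z$ birational to $X$, $\Db(Z)$ contains an admissible subcategory equivalent to $\bA_X$. The steps are as follows.

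\begin{enumerate}
\item Apply the result of~\cite{ElaginSchneiderShinder} to $Z=Y$. This gives an admissible subcategory of $\Db(Y)$ that is $\kk$-linearly equivalent to $\bA_X$ via a Fourier--Mukai type equivalence.
\item Use the fact, also from~\cite[Th.~5.5]{ElaginSchneiderShinder}, that for a minimal quartic del Pezzo surface $Y$ the atoms occurring in $\Db(Y)$ are $\langle\cO_Y\rangle$ (a copy of $\Db(\kk)$) and $\bA_Y$.
\item Exclude $\bA_X\simeq\Db(\kk)$. Over $\bar\kk$ one has $\bA_X\otimes\bar\kk\simeq\Db(\P_X)$, whose Grothendieck group has rank $\ge 2$ (it is $\rank K_0(X_{\bar\kk})-1=6$). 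Hence $\bA_X$ is not equivalent to $\Db(\kk)$.
\item Conclude that $\bA_X\simeq\bA_Y$ via a Fourier--Mukai type equivalence, which is (c).
\end{enumerate}

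The delicate point is that the atomic uniqueness statement is needed in a Fourier--Mukai, $\kk$-linear form. I would take this from~\cite{ElaginSchneiderShinder} directly. The equivalences there arise from compositions of mutation and blow-up/blow-down functors along a factorization of the birational map into Sarkisov links. All such functors are of Fourier--Mukai type, so their composites are too.
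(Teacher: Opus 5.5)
Your proposal is correct and follows the paper's proof. The paper argues (a)$\Rightarrow$(b) trivially; (b)$\Rightarrow$(c) from the birational invariance of atoms in~\cite{ElaginSchneiderShinder} (Prop.~5.1 with Def.~4.21), noting that the equivalences constructed there are composed of pull-backs, mutations and projections and so are of Fourier--Mukai type; and (c)$\Rightarrow$(a) from Theorem~\ref{theorem_main-perfect}. Your detour via ``$\Db(Y)$ contains a copy of $\bA_X$'' plus the $K_0$-exclusion of $\Db(\kk)$ is unnecessary once you invoke the invariance of atoms directly, and containment alone would not suffice since semi-orthogonal decompositions have no Jordan--H\"older property; also, the rank there is $8-1=7$, not $6$.
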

This result confirms the conjecture~\cite[Conj. 5.29]{ElaginSchneiderShinder}, which states that two geometrically rational surfaces are birational if and only if they have the same non-trivial atoms. 
Recently, the equivalence (a) $\Longleftrightarrow$ (b) has been obtained independently by classical methods of birational geometry in~\cite{ShramovTrepalin}.

Finally, as a result of independent interest, we prove the following.
\begin{theoremintro}[Theorems \ref{th_indecomposability}, \ref{th_indecomposability2}]
    \label{theorem_introD}
 Let $\kk$ be a perfect field of $\har \ne 2$, and let  $X$ be a minimal del Pezzo surface of degree $4$ over $\kk$. Then the category $\bA_X$ is semi-orthogonally indecomposable.
 Moreover, let $f\colon X\to C$ be a relatively minimal conic bundle over a smooth curve. If $C$ is rational, suppose that $K_X^2\le 4$. Then the category $\bA_{X/C}:=\ker f_*\subset \Db(X)$  is semi-orthogonally indecomposable.
\end{theoremintro}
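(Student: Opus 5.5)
Theorem~\ref{theorem_introD} has two parts, and I will handle both with one mechanism. The plan is to pass to the algebraic closure $\bar\kk$ with the Galois group $G=\Gal(\bar\kk/\kk)$, and to prove that $\bA_X$ admits no nontrivial $G$-stable semi-orthogonal decomposition. Over a perfect field, an admissible subcategory of $\bA_X$ that is part of a semi-orthogonal decomposition base-changes to a $G$-invariant admissible subcategory of $\bA_{\bar X}$. This is by the standard descent argument: semi-orthogonal decompositions of $\Db(X)$ correspond to $G$-invariant decompositions of $\Db(X_{\bar\kk})$ compatible with the descent datum.

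Over $\bar\kk$ we have $\bA_{\bar X}\cong \Db(\coh \P_X)$, where $\P_X$ is the weighted projective line with five points of weight $2$. I use the following structure of semi-orthogonal decompositions of $\Db$ of a weighted projective line.
\begin{enumerate}[label=(\roman*)]
\item Every nonzero admissible subcategory either is the whole category or contains an exceptional object. The category is hereditary, so indecomposable objects are shifts of sheaves, and a proper admissible piece is generated by exceptional objects (results of Krause--Meltzer--Hille type on thick subcategories of hereditary categories).
\item Hence, by Galois descent, a nontrivial decomposition of $\bA_X$ yields a $G$-orbit of exceptional objects in $\bA_{\bar X}$. More precisely, it yields a $G$-invariant admissible subcategory generated by an exceptional collection, whose class in $K_0$ spans a $G$-invariant sublattice on which the Euler form is unimodular upper triangular.
\end{enumerate}

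The key step is a lattice argument on the numerical Grothendieck group. Here $K_0^{\numerical}(\bA_{\bar X})$ is $\cO^\perp$ in $K_0(\bar X)=\Z\oplus\Pic\oplus\Z$, of rank $7$, and $G$ acts through the Weyl group $W(D_5)$. Minimality, meaning $\rank\Pic X=1$, gives the invariants $(K_0\otimes\Q)^G$, which are spanned by the classes of $\cO$, $K_X$ and the point. The $G$-invariant part of $\cO^\perp$ is therefore of rank $2$, while the orbit classes of the exceptional objects span a sublattice $L$ with a nondegenerate restricted form.

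I then show that no nonzero $G$-invariant sublattice $L$ admits an exceptional basis permuted by $G$ up to shifts. Two facts drive this: the orbit sum is a $G$-invariant vector with $\chi(v,v)=$ the orbit length (plus off-diagonal terms), and the invariant part of the form is explicitly computable, namely $\chi$ restricted to $\langle K_X, \text{pt}\rangle\cap\cO^\perp$. Checking that this form cannot represent the required values contradicts minimality. I expect this lattice computation to be the main obstacle. It requires controlling the possible Galois images in $W(D_5)$ with no invariant line class, which likely forces a case analysis over minimal subgroups (for instance, those containing an element acting without fixed classes on $\Pic$).

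For the conic bundle part I run the same scheme with $\bA_{X/C}=\ker f_*$. I use the relative decomposition $\Db(X)=\langle \bA_{X/C}, f^*\Db(C)\rangle$ and the fact that, after base change over $\bar\kk$, $\bA_{X/C}$ is glued from the components of degenerate fibers and a $\Db(C)$-twisted part. Relative minimality means $G$ has no invariant disjoint $(-1)$-curves in fibers. Indecomposability again reduces to showing there is no $G$-stable exceptional collection, except when the genus of $C$ is positive. In that case I would use that a curve of positive genus admits no exceptional objects together with Kawatani--Okawa-type results for the $\Db(C)$ part. The bound $K_X^2\le4$ for rational $C$ excludes the small cases where an invariant exceptional pair exists, which I will check via the explicit lattice.
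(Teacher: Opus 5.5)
You correctly reduce to $G$-invariant decompositions over $\bar\kk$, and you correctly find that $(K_0(\bA_{\bar X})_\Q)^G$ has rank $2$. However, there is a genuine gap between your step (ii) and the lattice argument. Knowing that a $G$-invariant piece $\bA\subset\Db(\coh\P)$ is generated by \emph{some} exceptional collection gives you nothing usable. $G$ need not permute that collection, and the $G$-orbit of one exceptional object need not be an exceptional collection, let alone an orthogonal one. So your orbit sum satisfies only $\chi(v,v)=|I|+(\text{off-diagonal terms})$, with no control on the sign.

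The missing idea is a \emph{canonical} exceptional collection attached to the piece. The paper obtains it from the classification of semi-orthogonal decompositions of $\Db(\coh\P)$ in \cite{Elagin_WPC}. In any decomposition $\langle\bA,\bB\rangle$, at least one piece is quiver-like: its heart $\cA=\bA\cap\coh\P$ is equivalent to $\moddo\kk\Gamma$ for a finite acyclic quiver $\Gamma$. Hence its simple objects form an exceptional collection determined by $\bA$ alone. Since $G$ acts on the abelian category $\coh\P$ itself, and not merely on $\bA_{\bar X}$ (the action comes from automorphisms of $\bar X$, which preserve the heart $\cP_{\bar X}$), $G$ preserves $\cA$ and permutes these simples. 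A finite $G$-permuted exceptional collection then splits into completely orthogonal $G$-orbits, because a nonzero $\Hom^\bullet(E,gE)$ would propagate around the finite orbit to an oriented cycle. Only at this point does an orbit sum $e=\sum_{i\in I}[E_i]$ satisfy $\chi(e,e)=|I|>0$. Your item (i), an appeal to ``Krause--Meltzer--Hille type'' results, does not supply this.

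Once that input is in place, the lattice step you flagged as the main obstacle is immediate and needs no case analysis over subgroups of $W(D_5)$. An invariant class in $\cO^\perp$ has $\mathrm{ch}(v)=(r,aK_X,2a-r)$, and Riemann--Roch gives $\chi(v,v)=-(r-2a)^2\le 0$, which contradicts $\chi(e,e)>0$. This would in fact shortcut the paper's argument. The paper instead splits into two cases: two quiver-like pieces (two orbits, Lemma~\ref{lemma_EF}, and a hereditary bipartite algebra contradicting non-domesticity), and a torsion piece (two independent invariant torsion classes against $\rank F^1K_0^G=1$).

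Your conic bundle plan is wrong as stated. $\bA_{X/C}$ is not glued from $\Db(C)$. By Kuznetsov it is equivalent to $\Db(\coh\P)$, where $\P$ is $C$ with weight $2$ at the $n$ degenerate points. It always contains exceptional objects, for example $\cO_E(-1)$ for a component $E$ of a singular fibre, and it has non-invariant decompositions. So ``curves of positive genus have no exceptional objects'' and Kawatani--Okawa are beside the point. The same canonical-simples argument applies instead. Writing an invariant class of $K_0(\bA_{X/C})_{\numerical}$ as $\mathrm{ch}(v)=(2a,\,aK_X+bF,\,\ast)$, one finds $\chi(v,v)=(4(1-g)-n)a^2$. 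This is $\le 0$ exactly under the theorem's hypotheses: $g\ge 1$, or $g=0$ and $K_X^2=8-n\le 4$.
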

This result confirms the conjecture \cite[Conj. 5.6]{AuelBernardara}, which claims  indecomposability of $\cO_X^\perp\subset \Db(X)$ for any minimal del Pezzo surface $X$ of degree $\le 4$. It also justifies the name ``atom'' (which is Greek for indecomposable)  used  for the categories $\bA_X$, $\bA_{X/C}$ in~\cite{ElaginSchneiderShinder}. 

\medskip
Let us now discuss the proofs. For a del Pezzo surface $X$ of degree $4$, we use the equivalence $\bA_X\cong \Db(\coh\P_X)$ and employ results of~\cite{LenzingMeltzer} describing the autoequivalence groups of the categories $\coh\P_X$ and $\Db(\coh\P_X)$. These results imply that any equivalence $\Db(\coh\P_X)\xra{\sim}\Db(\coh\P_Y)$, up to some shift, sends $\coh\P_X$ to $\coh\P_Y$, and any equivalence $\coh\P_X\to\coh\P_Y$ induces an isomorphism $\P_X\to\P_Y$. It is not hard to see  then (Lemma~\ref{lemma_diagonal}) that there is an isomorphism $X\xra{\sim}Y$. 

To prove Theorem~\ref{theorem_introA}, we compare the automorphism group of $X$  with the autoequivalence group of $\bA_X$. The key statement (Lemma~\ref{lemma_main}) is that the natural group homomorphism $\Aut(X)\to\Aut(\bA_X)$ is split, i.e., has a left inverse. To prove this statement, we  observe that these two groups act compatibly on the weighted projective line $\P_X$, on the set $H_X\subset \Pic X$ of ten divisor classes of conic bundles on $X$, and on the set of five degenerate quadrics containing $X$. For $\Aut (X)$ this is obvious, while for $\Aut(\bA_X)$ such actions are due to the facts that $\bA_X\cong \Db(\coh\P_X)$ and that $\P_X$, $H_X$, and $Q_X$ are intrinsic to the abelian category $\coh\P_X$.  From these actions we obtain group homomorphisms 
$$\alpha\colon \Aut(X)\to \Aut^{\ev}(H_X,Q_X)\times_{\Aut(Q_X)}\Aut(\P_X)$$ 
and 
$$\beta\colon \Aut(\bA_X)\to \Aut(H_X,Q_X)\times_{\Aut(Q_X)}\Aut(\P_X),$$
where $\Aut(H_X,Q_X)$ denotes the group of permutations of $H_X$ compatible with the natural map $H_X\to Q_X$, and 
$\Aut^{\ev}(H_X,Q_X)$ denotes the subgroup of even permutations. To construct the desired splitting, we observe that $\alpha$ is an isomorphism and the embedding of groups $\Aut^{\ev}(H_X,Q_X)\to \Aut(H_X,Q_X)$ is a split homomorphism.

Given that the homomorphism $\Aut(X)\to\Aut(\bA_X)$ splits, the proof of Theorem~\ref{theorem_introA} is purely by abstract category theory, see Lemma~\ref{lemma_hseparable}. Theorem~\ref{theorem_introB} follows from Theorem~\ref{theorem_introA} by Galois descent, and Theorem~\ref{theorem_introC} is a combination of Theorem~\ref{theorem_introB} and results from~\cite{ElaginSchneiderShinder}.

Theorem~\ref{theorem_introD} is based on results from our paper~\cite{Elagin_WPC}, where triangulated subcategories on weighted projective curves are studied. The main result that we use is that most admissible subcategories on a weighted projective curve are generated by   uniquely determined $\Hom$-orthogonal exceptional collections  of sheaves.

The structure of the paper is as follows. In Section~\ref{section_prelim} we collect the necessary facts about quartic del Pezzo surfaces, their automorphisms, categories of coherent sheaves on weighted projective lines, and their  autoequivalences. Section~\ref{section_prelim}  culminates in proving that the natural homomorphism $\Aut(X)\to\Aut(\bA_X)$ splits. We also discuss heavily separable functors in this section. After proper preparation, we prove Theorems~\ref{theorem_introA},~\ref{theorem_introB},~\ref{theorem_introC} in Section~\ref{section_main}.  Section~\ref{section_indecomposability} is relatively self-contained and devoted to the proof of Theorem~\ref{theorem_introD}.

\subsection{Acknowledgements}

This paper grew out of  collaboration with Evgeny Shinder and Julia Schneider on atomic semi-orthogonal decompositions~\cite{ElaginSchneiderShinder}, and I am greatly indebted to them. I also thank Vanya Cheltsov, Alexander Kuznetsov, and Constantin Shramov for useful discussions and their interest in this work. The study was supported by the UKRI Horizon Europe guarantee award `Motivic invariants and birational geometry of simple normal crossing degenerations' EP/Z000955/1.

\subsection{Conventions}

We work over a perfect field $\kk$ of characteristic $\ne 2$ which is not necessarily algebraically closed. Algebraic varieties are supposed to be smooth projective and over $\kk$, but morphisms are not supposed to be morphisms over $\kk$ by default. Similarly, functors between $\kk$-linear categories are always additive but not necessarily $\kk$-linear. Functors between triangulated categories are always supposed to be exact. When we consider group actions/$G$-varieties, we make no implicit assumptions, in particular, groups can be infinite  or act non-faithfully.

\section{Preliminaries}
\label{section_prelim}

\subsection{Coxeter groups of type $B_5$ and $D_5$}

Let $H$ be a finite set of $10$ elements, $Q$ be a finite set of $5$ elements, and $\pi\colon H\to Q$ be a $2$-to-$1$ map.
Denote by $\Aut(H)\cong S_{10}$ and $\Aut(Q)\cong S_5$ the groups of permutations of $H$ and $Q$, respectively, denote by $\Aut(H,Q)$ the subgroup $\{(f,g)\mid \pi f=g\pi\}\subset \Aut(H)\times \Aut(Q)$ of compatible permutations of $H$ and $Q$, and by  $\Aut(H/Q)\cong (\Z/2\Z)^5$ the subgroup $\{f\mid \pi f=\pi\}\subset \Aut(H)$ of permutations of $H$ over $Q$. There is an exact sequence of groups
\begin{equation}
\label{eq_sesB5}
    \{e\}\to \Aut(H/Q)\to \Aut(H,Q)\to \Aut(Q)\to \{e\},
\end{equation}
given by the maps $(f)\mapsto (f,e)$ and $(f,g)\mapsto g$. 
Let $\Aut^{\ev}(H,Q)$ be the subgroup $\{(f,g)\mid \text{$f$ is even})\}\subset \Aut(H,Q)$ and $\Aut^{\ev}(H/Q)\cong (\Z/2\Z)^4$ be the subgroup of even permutations in $\Aut(H/Q)$.
These groups form an exact sequence, which is a subsequence of~\eqref{eq_sesB5}
\begin{equation}
    \{e\}\to \Aut^{\ev}(H/Q)\to \Aut^{\ev}(H,Q)\to \Aut(Q)\to \{e\}.
\end{equation}
Note that the groups $\Aut(H,Q)$ and $\Aut^{\ev}(H,Q)$ are isomorphic to Coxeter groups of types $B_5$ and $D_5$, respectively.

The following easy lemma will be important for us.
\begin{lemma}
    \label{lemma_BDsplit}
The inclusion $\Aut^{\ev}(H,Q)\to \Aut(H,Q)$ splits as a group homomorphism over $\Aut(Q)$. 
\end{lemma}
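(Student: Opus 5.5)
We need a retraction $r\colon \Aut(H,Q)\to \Aut^{\ev}(H,Q)$ that restricts to the identity on $\Aut^{\ev}(H,Q)$ and commutes with the projections to $\Aut(Q)$. The plan is to use the semidirect product structure of $B_5$ together with the fact that $\Aut(H/Q)\cong(\Z/2\Z)^5$ splits as $\Aut^{\ev}(H/Q)\times\langle z\rangle$, where $z$ is the central element swapping every fibre of $\pi$. The key observation is that $z$ is an odd permutation of $H$, being a product of five disjoint transpositions.

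First, I will check that $z$ is central in $\Aut(H,Q)$. For $(f,g)\in\Aut(H,Q)$, the map $f$ sends the fibre $\pi^{-1}(q)$ bijectively onto $\pi^{-1}(g(q))$. Hence $fzf^{-1}$ swaps every fibre, and so equals $z$.

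Next, I will show that $\Aut(H,Q)=\Aut^{\ev}(H,Q)\times\langle z\rangle$ as an internal direct product. Both factors are normal: $\Aut^{\ev}(H,Q)$ has index $2$, since it is the kernel of the sign character restricted to $\Aut(H,Q)$, which is surjective because $z$ is odd. The subgroup $\langle z\rangle$ is central. They intersect trivially because $z$ is odd, and their product has order $2\cdot|\Aut^{\ev}(H,Q)|=|\Aut(H,Q)|$.

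Finally, I will define $r(f,g)=(f,g)$ if $f$ is even and $r(f,g)=(fz,g)$ if $f$ is odd. This is exactly the projection onto the first factor of the direct product, so it is a group homomorphism. It is the identity on $\Aut^{\ev}(H,Q)$. Since $z$ maps to the identity of $\Aut(Q)$, $r$ commutes with the projections to $\Aut(Q)$, which is the required splitting over $\Aut(Q)$.

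The only point needing care is the centrality of $z$ together with the oddness of $z$, and both are immediate. Everything else is formal.
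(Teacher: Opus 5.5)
Your proof is correct and follows the paper's argument: the paper also takes the odd central involution $c\in\Aut(H/Q)$ swapping all five fibres, writes $\Aut(H,Q)=\Aut^{\ev}(H,Q)\times\langle c\rangle$, and uses the projection onto the first factor. You spell out the verifications (centrality, index $2$, trivial intersection, compatibility with $\Aut(Q)$) that the paper leaves implicit.
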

\begin{proof}
    Let $c\in \Aut (H/Q)$ be the product of five transpositions in the fibres of $\pi$, it is an odd central element of order $2$. Then $\Aut(H,Q)$ is the direct product of its subgroups $\Aut^{\ev}(H,Q)\times \langle c\rangle$, and the projection $\Aut(H,Q)\to \Aut^{\ev}(H,Q)$ gives the desired splitting. 
\end{proof}

\subsection{Weighted projective lines  and their autoequivalences}
We refer to~\cite{GL},~\cite{Lenzing_hercat}, and~\cite{LenzingMeltzer} for the definitions and necessary results about weighted projective lines, their categories of coherent sheaves, and the autoequivalences of these categories.
In this subsection we assume that the base field $\kk$ is algebraically closed.

A weighted projective line $\P=(P,w)$ over $\kk$ consists of a projective line $P\cong \P^1_\kk$ over $\kk$ together with a weight function 
$w\colon P(\kk)\to \Z_{\ge 1}$ such that $w(p)=1$ for all but finitely many points $p\in P(\kk)$. Points $p_1,\ldots, p_n$ where $w>1$ are called \emph{weighted points} and the integers $w_i=w(p_i)$, $i=1,\ldots,n$ are called the \emph{weights} at these points. The collection of weights $(w_1,\ldots,w_n)$ is called the \emph{type} of $\P$. An \emph{isomorphism of weighted projective lines} $(P,w)\xra{\sim}(P',w')$ is an isomorphism $f\colon P\xra{\sim}P'$ of varieties that preserves the weights, that is, such that $w=w'f$. We denote by $\Aut(\P)$ the  automorphism group of $\P$ and by $\Aut_\kk(\P)$ the subgroup consisting of $\kk$-linear automorphisms.
\begin{remark}
\label{remark_AutPfinite}
    Note that $\Aut_\kk(\P)$ is a subgroup of $\Aut_\kk(\P^1_\kk)\cong \PGL_2(\kk)$. Moreover, if the number~$n$ of weighted points of $\P$ is at least three, then $\Aut_\kk(\P)$ is a subgroup of $\Aut(\{p_1,\ldots,p_n\})$, hence is finite. Indeed, a $\kk$-linear automorphism of $\P^1_\kk$ is determined by the image of any three points.
\end{remark}

The category of coherent sheaves on $\P=(P,w)$ can be described following Geigle and Lenzing, see~\cite{GL}. 
Let $V=\rH^0(P,\cO(1))$, choose non-zero linear forms $u_i\in V$ such that $u_i(p_i)=0$. Denote
$$S_\P:=(S^\bul(V)\otimes \kk[U_1,\ldots,U_n])/(U_1^{w_1}-u_1,\ldots,U_n^{w_n}-u_n),$$
where $S^\bul(V)$ denotes the symmetric algebra.
Let $\L=\L_\P$ be the abelian group generated by elements $\bar c,\bar x_1,\ldots,\bar x_n$ and relations $\bar c=w_i\cdot \bar x_i$ for $i=1,\ldots,n$. Then $S_\P$ is an $\L$-graded commutative algebra with the grading given by $\deg(U_i)=\bar x_i$, $\deg (v)=\bar c$ for $v\in V$.
Define $\coh\P$ as the Serre quotient
$$\coh\P=\frac{\mathrm{mod}^\L{-}S_\P}{\mathrm{mod}_0^\L{-}S_\P},$$
where $\mathrm{mod}^\L{-}S_\P$ is the category of finitely generated $\L$-graded $S_\P$-modules, and $\mathrm{mod}_0^\L{-}S_\P\subset \mathrm{mod}^\L{-}S_\P$ is the subcategory of finite-dimensional modules. Objects of $\coh\P$ are called \emph{coherent sheaves} on $\P$. For a sheaf $F$ and an element $\bar x\in \L$, denote by $F(\ox)$ the sheaf obtained from $F$ by twisting with $\ox$: that is, by shifting the grading by $\ox$. 
The category $\coh\P$ is a $\kk$-linear abelian hereditary $\Ext$-finite category. It is  
equipped with a translation functor $\tau\colon \coh\P\to\coh\P$ given by twisting with the canonical element 
$$\bar\omega:=-2\oc+\sum_{i=1}^n(w_i-1)\ox_i.$$
The shift $\tau[1]$ of the translation functor serves as a Serre functor on the derived category $\Db(\coh\P)$, so $\tau$ is intrinsic to the category $\coh\P$.

Let $\coh_0\P\subset \coh\P$ be the full subcategory of torsion sheaves, that is, of objects of finite length. There is a decomposition
$$\coh_0\P=\coprod_{p\in P(\kk)}\coh_p\P,$$
where $\coh_p\P$ is the full subcategory of sheaves supported at the closed point $p\in P$. Each category $\coh_p\P$ has $w(p)$ simple objects and they form one orbit under $\tau$. Therefore, the set of closed points of $P$ is identified with the set of $\tau$-orbits of simple objects in $\coh\P$, and the multiplicities are recovered as the lengths of the orbits. Moreover, if a simple sheaf $F$ is supported at a weighted point $p_i$ then all simple sheaves supported at $p_i$ are $F(k\ox_i)$, $k=0,\ldots,w_i-1$, and $F\cong F(\oc)\cong F(\ox_j)$ for $i\ne j$. We remark that $\P$ is canonically determined by $\coh\P$ in the following sense.
\begin{lemma}
\label{lemma_new}
For any weighted projective lines $\P,\P'$ over $\kk$,
any equivalence $\phi\colon \coh\P\xra{\sim}\coh\P'$ defines canonically an isomorphism $f=\phi_\P\colon \P\xra{\sim}\P'$, which is  compatible with $\phi$  via  the action on  simple sheaves: for any point $p\in P(\kk)$ one has $\phi(\coh_p\P)=\coh_{f(p)}\P'$ as subcategories in $\coh\P'$. Moreover, if $\phi$ is linear over $\kk$ then $f$ also is.
\end{lemma}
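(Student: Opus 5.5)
The plan is to recover $\P$ from $\coh\P$ in three stages: first the set of points with their weights, then the structure of $P$ as a scheme, and finally to check that $f$ is $\kk$-linear when $\phi$ is.

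\textbf{Points and weights.} An equivalence $\phi\colon\coh\P\to\coh\P'$ sends simple objects to simple objects. It commutes with $\tau$ up to natural isomorphism, because $\tau[1]$ is the Serre functor on $\Db$ and Serre functors are intrinsic: a non-$\kk$-linear equivalence between $\Ext$-finite categories intertwines Serre functors up to the induced field automorphism, so $\phi\tau\cong\tau'\phi$. Hence $\phi$ induces a bijection between $\tau$-orbits of simple objects, preserving orbit lengths. By the discussion above, this is a bijection $f\colon P(\kk)\to P'(\kk)$ with $w'f=w$ and $\phi(\coh_p\P)=\coh_{f(p)}\P'$. Since $\coh_p\P$ is the extension closure of the simples in the orbit, and $\phi$ preserves extension closures, the compatibility statement is automatic once $f$ is defined this way. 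Canonicity is also clear, since $f$ depends only on the isomorphism class of $\phi$.

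\textbf{$f$ is an isomorphism of varieties.} This is the main obstacle: a bijection of closed points need not be algebraic. I would reconstruct $P$ from line bundles.

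1. Choose a line bundle $L$ on $\P$, meaning an indecomposable torsion-free object of rank one. Rank can be read off intrinsically, for instance via the Grothendieck group and the Euler form.
2. For any line bundle $L$ and point $p$ there is a unique simple $S$ supported at $p$ with $\Hom(L,S)\ne0$, and then $\Hom(L,S)\cong\kk$.
3. Consider the unique line bundle $L(\oc)$ with the following property: for each unweighted point $p$ there is an injection $L\to L(\oc)$ whose cokernel is the simple at $p$. Alternatively, characterise $\oc$ as the element of $\L$ with $F(\oc)\cong F$ for all simples at unweighted points; twists correspond to line bundles, which are intrinsic.
4. The space $V_L=\Hom(L,L(\oc))$ is two-dimensional. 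Each point $p$ corresponds to the line of maps whose cokernel is supported at $p$; for a weighted point, the cokernel is supported at $p_i$. This identifies $P(\kk)$ with $\P(V_L)$.

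Now $\phi$ gives a semilinear isomorphism $V_L\to V_{\phi L}$, relative to the field automorphism $\sigma$ by which $\phi$ acts on the centre $\kk=\End(\text{simple})$. Up to identification this is the action on $\Hom(\phi L,\phi L(\oc))$, using that $\phi$ commutes with twisting by $\oc$ on line bundles, which follows from the intrinsic characterisation in step 3. Since the induced map on $\P(V_L)\to\P(V_{\phi L})$ is $f$, it follows that $f$ is induced by a $\sigma$-semilinear map of $2$-dimensional spaces. Such a map is an isomorphism of schemes $P\to P'$ lying over $\Spec\sigma$.

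\textbf{Linearity.} If $\phi$ is $\kk$-linear, then $\sigma=\id$, the map $V_L\to V_{\phi L}$ is linear, and therefore $f$ is $\kk$-linear.

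The delicate points are the intrinsic characterisation of $L(\oc)$ and the verification that the map $V_L\to V_{\phi L}$ is well defined up to scalar. The latter holds because $\Hom(L,L(\oc))$ is functorial and the automorphisms of a line bundle are scalars.
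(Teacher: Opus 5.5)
Your argument is correct and is essentially the proof the paper delegates to Lenzing--Meltzer. Closed points and weights come from $\tau$-orbits of simple objects, and the algebraic structure on $P$ comes from the two-dimensional space $\Hom(L,L(\oc))$, the degree-$\oc$ part of the coordinate algebra; your semilinearity bookkeeping is exactly what is needed to pass to $\P\ne\P'$ and non-$\kk$-linear $\phi$, as the paper asserts.

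One small correction: the alternative characterisation of $\oc$ in your step 3 fails, since every element of $\L$ fixes the simples at unweighted points. Your primary characterisation of $L(\oc)$ as the unique non-split extension of such a simple $S_p$ by $L$, using $\dim\Ext^1(S_p,L)=1$, suffices on its own.
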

\begin{proof}
    See the proof of~\cite[Prop. 3.1, Th. 3.4]{LenzingMeltzer}. They assume that $\P=\P'$ and $\phi$ is $\kk$-linear but the proof works in a more general setting as well.
\end{proof}

We will be interested in autoequivalences of the abelian category $\coh\P$. Denote by $\Aut(\coh\P)$ the group of isomorphism classes of autoequivalences of $\coh\P$, and by $\Aut_\kk(\coh\P)$ the subgroup formed by $\kk$-linear autoequivalences. First, there are geometric equivalences: any isomorphism $f\colon \P\xra{\sim}\P'$ of weighted projective lines induces an equivalence $\coh\P\xra{\sim}\coh \P'$ given by the direct image functor. There are also geometrically trivial autoequivalences: for any $\ox\in\L$, the shift of grading by $\ox$ defines an autoequivalence of $\coh\P$, which does not change the support of sheaves. These two types of autoequivalences generate the whole group $\Aut(\coh\P)$: 
\begin{lemma}
    \label{lemma_AutCohP}
    There are natural split exact sequences of groups
    \begin{gather*}
        0\to \L\to \Aut(\coh \P)\to \Aut(\P)\to\{e\},\\
        0\to \L\to \Aut_\kk(\coh \P)\to \Aut_\kk(\P)\to\{e\}.
    \end{gather*}
\end{lemma}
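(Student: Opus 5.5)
The plan is to construct the maps explicitly and reduce everything to Lemma~\ref{lemma_new} together with the Lenzing--Meltzer description of autoequivalences fixing all points. The map $\L\to\Aut(\coh\P)$ sends $\ox$ to the grading shift $F\mapsto F(\ox)$. This is a homomorphism because $(F(\ox))(\oy)=F(\ox+\oy)$ canonically, and its image consists of $\kk$-linear autoequivalences. The map $\Aut(\coh\P)\to\Aut(\P)$ is $\phi\mapsto\phi_\P$ from Lemma~\ref{lemma_new}. It is a homomorphism because $f=\phi_\P$ is characterized by $\phi(\coh_p\P)=\coh_{f(p)}\P$, a condition stable under composition and invariant under isomorphism of functors. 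By the last clause of Lemma~\ref{lemma_new}, it restricts to $\Aut_\kk(\coh\P)\to\Aut_\kk(\P)$.

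For the splitting, I would send $f\in\Aut(\P)$ to the direct image $f_*$. To make this precise when $f$ is not $\kk$-linear, realize $f$ on the graded ring. Choose a semilinear graded automorphism $\tilde f$ of $S_\P$: a semilinear map $V\to V$ lifting $f$ (possible because the twist $f^*\cO(1)\cong\cO(1)$ and $\Aut(\P^1_\kk)=\PGL_2(\kk)\rtimes\Aut(\kk)$), together with $U_i\mapsto\lambda_iU_{\sigma(i)}$. Here $\sigma$ is the permutation of weighted points induced by $f$, which preserves weights, and $\lambda_i^{w_i}$ is chosen to match the image of $u_i$; this uses $\kk=\bar\kk$. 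Then $\tilde f$ induces an automorphism of $\L$ permuting the $\ox_i$, so $f_*$ is well defined only up to twisting by $\L$. The composition $f_*g_*\cong (fg)_*$ therefore has to be made canonical.

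This is where I expect the main difficulty. To handle it, I would rigidify by noting that restriction to line bundles gives the pullback-pushforward description. Concretely, I would define $f_*$ as the unique lift of $f$ that fixes the structure sheaf $\cO$ (the image of $S_\P$). Different choices of $\tilde f$ with the same $f$ differ by rescaling the generators, and such rescalings induce autoequivalences isomorphic to the identity. Hence $f\mapsto[f_*]$ is a well-defined homomorphism to isomorphism classes, with $(f_*)_\P=f$, and it gives the section. Because $\L$ is acted on by $\Aut(\P)$ through $\sigma$, the resulting group is a semidirect product $\L\rtimes\Aut(\P)$, which is consistent with the claim of a split sequence rather than a direct product.

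Next comes exactness. Surjectivity onto $\Aut(\P)$ follows from the section, and injectivity of $\L\to\Aut(\coh\P)$ holds since $\cO(\ox)\not\cong\cO$ for $\ox\neq0$. The key point is that the kernel is exactly $\L$. Let $\phi$ satisfy $\phi_\P=\id$. The twist $\cO\mapsto\phi(\cO)$ is a line bundle, and line bundles are exactly the $\cO(\ox)$. Composing with the shift by $-\ox$, we may assume $\phi(\cO)\cong\cO$ and $\phi(\coh_p\P)=\coh_p\P$ for all $p$. The task is to show that such $\phi$ is isomorphic to the identity. Here I would follow~\cite[Prop.~3.1, Th.~3.4]{LenzingMeltzer}, as in Lemma~\ref{lemma_new}. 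The steps are:
\begin{enumerate*}[label=(\roman*)]
\item $\phi$ fixes each simple $S$ at a weighted point, since it preserves $\Hom(\cO,S)\neq0$ and there is a unique such simple at each $p_i$;
\item hence $\phi$ fixes all line bundles $\cO(\ox)$, detected by the dimensions of $\Hom$ into simples;
\item $\phi$ induces a graded ring automorphism of $\bigoplus_{\ox}\Hom(\cO,\cO(\ox))=S_\P$ fixing the zero loci of the $U_i$ and of all $v\in V$ up to scalar.
\end{enumerate*}
Such an automorphism has the form $v\mapsto\mu v$ composed with a field automorphism. The field automorphism part acts trivially on $P(\kk)$ only if it is trivial on the coordinates, and this is the delicate point to check. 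Once it is excluded, the remaining scaling automorphism is isomorphic to the identity on the Serre quotient. The $\kk$-linear sequence is then the restriction, since both the shifts and the section (for $\kk$-linear $f$) are $\kk$-linear.
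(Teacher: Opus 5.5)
This is essentially the paper's route: the paper's proof just cites \cite[Th.~3.4]{LenzingMeltzer} and asserts that the argument survives without $\kk$-linearity, and your semilinear lift of $f$ to a graded automorphism of $S_\P$ is precisely what makes the splitting work in that generality, since the lift is unique up to rescaling by a character of $\L$ and such a rescaling induces a functor isomorphic to the identity. The point you call delicate is immediate---a $\theta$-semilinear automorphism of the two-dimensional space $V$ that preserves every line is a scalar and forces $\theta=\id$---so any $\phi$ with $\phi_\P=\id$ is $\kk$-linear and the Lenzing--Meltzer kernel computation applies verbatim, provided you run step (iii) on the full subcategory of line bundles rather than on the ring $S_\P$, because $\phi$ is not yet known to commute with the twists.
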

\begin{proof}
This is~\cite[Th. 3.4]{LenzingMeltzer} for $\kk$-linear autoequivalences/automorphisms, but their proof works in the general case as well. 
\end{proof}

The degree homomorphism $\deg\colon \L\to\Q$ is given by $\deg(\oc)=1, \deg(\ox_i)=\frac1{w_i}$. The kernel of $\deg$ is the torsion subgroup $\L^{\tors}\subset \L$.

Properties of the category $\coh\P$ depend on  whether the degree $\deg(\bar\omega)$ of the canonical element is negative, zero, or positive. A weighted projective line is called \emph{domestic/tubular/wild} in these three cases respectively.

\medskip
Since the category $\coh\P$ is hereditary, its derived category $\Db(\coh\P)$ has a very simple structure: any object is isomorphic to the direct sum of its cohomology sheaves. In particular, any indecomposable object is a shift of an indecomposable sheaf. The group $\Aut(\Db(\coh \P))$ of isomorphism classes of autoequivalences of $\Db(\coh\P)$ is described in~\cite{LenzingMeltzer}; we will only need this description in the wild case.
\begin{lemma}
     \label{lemma_AutDbCohP}
     For a weighted projective line $\P$ not of tubular type  there is a direct product decomposition 
     $$\Aut(\Db(\coh \P))\cong \Aut(\coh\P)\times \Z,$$
     where $\Z$ acts by cohomological shifts. Moreover, if $\P,\P'$ are weighted projective lines not of tubular type and $\phi\colon \Db(\coh\P)\xra{\sim}\Db(\coh\P')$ is an equivalence, then there is some $n\in\Z$ such that $\phi[n]$ restricts to  an equivalence $\coh\P\xra{\sim}\coh\P'$. Similar statements hold for $\kk$-linear autoequivalences.
\end{lemma}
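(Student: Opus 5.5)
The plan is to show that any equivalence $\phi\colon \Db(\coh\P)\xra{\sim}\Db(\coh\P')$ maps the heart $\coh\P$ to a shift of the heart $\coh\P'$. The direct product decomposition is then a formal consequence.

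\textbf{Step 1: distinguish sheaves intrinsically.} Since $\coh\P$ is hereditary, every indecomposable object of $\Db(\coh\P)$ is $F[m]$ for an indecomposable sheaf $F$ and a unique $m\in\Z$. We need a triangulated-invariant way to recover the heart from the indecomposables. Tubularity enters here: for $\P$ not tubular, $\coh\P$ has no nontrivial tilts that are again equivalent to hereditary categories of sheaves with the same Serre functor. Concretely, I would use the Serre functor $\tau[1]$, which is preserved by $\phi$ up to isomorphism. Apply the following criterion.
\begin{itemize}
\item Torsion simple sheaves $S$ are characterised, up to shift, as the indecomposable objects $E$ with $\tau^{k}E\cong E$ for some $k\ge 1$ (i.e.\ $\rS^k E\cong E[k]$) that are "minimal": they lie at the mouth of a homogeneous or exceptional tube.
\item In the domestic and wild cases, torsion sheaves are exactly the $\tau$-periodic indecomposables. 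Vector bundles are not $\tau$-periodic, because $\deg\bar\omega\ne 0$ changes the degree under $\tau$.
\end{itemize}
So $\phi$ sends the shifted torsion objects $\bigcup_m \coh_0\P[m]$ to $\bigcup_m\coh_0\P'[m]$.

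\textbf{Step 2: show the shift is uniform.} We need $\phi(\coh_0\P)\subset \coh_0\P'[n]$ for a single $n$. Two indecomposable torsion sheaves at different points are $\Hom$-orthogonal, so connectivity must come from vector bundles, for instance via $\Hom(L,S)\ne0$ for a line bundle $L$ and any simple $S$.

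I would characterise vector bundles (up to shift) as indecomposables that are not $\tau$-periodic. For an indecomposable bundle $E$ and a simple sheaf $S$ in $\coh\P$, one of $\Hom(E,S)$ or $\Ext^1(E,S)$ is nonzero, while $\Hom(S,E)=0$ and $\Ext^1(S,E)\ne 0$ for suitable $S$. Rigidity of the shifts then follows: if $\phi(S)=S'[a]$ and $\phi(E)=E'[b]$, nonvanishing of $\Hom^{i}(E,S)$ for $i\in\{0,1\}$ together with the two-term amplitude of $\Hom^\bullet$ between sheaves forces $b-a\in\{0\}$ after comparing several $S$. Combining this over all points via a fixed line bundle gives one common $n$ with $\phi(\coh\P)\subset \coh\P'[n]$.

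\textbf{Step 3: conclude.} The same argument applied to $\phi^{-1}$ shows that $\phi[-n]$ restricts to an equivalence of hearts. The map $\Aut(\Db(\coh\P))\to\Z\times\Aut(\coh\P)$ sending $\phi$ to $(n,\phi[-n]|_{\coh\P})$ is a homomorphism, because shifts are central. Its inverse is given by deriving, since an exact equivalence of abelian categories extends to the derived category. Injectivity needs the fact that an autoequivalence of $\Db$ preserving the heart is determined, up to isomorphism, by its restriction; for hereditary categories this is standard, e.g.\ as used in \cite{LenzingMeltzer}. The $\kk$-linear version follows verbatim.

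\textbf{Main obstacle.} The hardest step is Step 2. Pinning down the precise shift bookkeeping requires handling $\Ext^1(E,S)$ versus $\Hom(E,S)$, so that the shift cannot jump between the torsion part and the bundle part. Failing a clean argument, I would invoke \cite{LenzingMeltzer} directly, where exactly this is proved for $\P=\P'$, and note that their argument does not use $\P=\P'$ or $\kk$-linearity.
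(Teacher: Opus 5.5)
Your fallback is exactly the paper's proof. The paper cites \cite[Prop.~4.1, Prop.~4.2]{LenzingMeltzer} and remarks that the argument of Prop.~4.1 does not need $\P=\P'$ or $\kk$-linearity. Steps 1 and 3 of your direct route are sound. In Step 1, for non-$\kk$-linear $\phi$, you should add that $\phi$ is semilinear over an automorphism of $\kk$ and so still commutes with Serre functors up to isomorphism. The indecomposables $X$ with $\rS^kX\cong X[k]$ for some $k\ge 1$ are then exactly the shifted indecomposable torsion sheaves.

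The genuine gap is Step 2, which is where the content lies. Your assertion that one of $\Hom(E,S)$, $\Ext^1(E,S)$ is nonzero, for $E$ an indecomposable bundle and $S$ simple, is false.
\begin{itemize}
\item $\Ext^1(E,S)\cong\Hom(S,\tau E)^{*}=0$ always, since there are no nonzero maps from torsion sheaves to bundles.
\item $\Hom(E,S)$ vanishes for many simples at weighted points; for example, a line bundle maps nontrivially to only one of the $w_i$ simples at $p_i$.
\end{itemize}
Moreover, knowing only that $\Hom^\bullet$ between two sheaves lives in degrees $0$ and $1$ gives $|a-b|\le 1$, and ``comparing several $S$'' does not improve this.

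The missing idea is to use the vanishing of $\Ext^1$ from bundles to torsion sheaves on the target side, which Step 1 makes available. By Step 1, $\phi(E)=E'[b]$ with $E'$ a bundle and $\phi(T)=T'[a]$ with $T'$ torsion. So $\Hom(\phi E,\phi T)=\Ext^{a-b}(E',T')$ can be nonzero only if $a=b$, and hence $\Hom(E,T)\ne 0$ forces equal shifts. The argument then closes as follows.
\begin{itemize}
\item Take $T=S_x$ simple at a non-weighted point $x$; every nonzero bundle maps nontrivially to it. So all indecomposable bundles acquire one common shift $n$.
\item Line bundles generate $\coh\P$, so every indecomposable torsion sheaf receives a nonzero map from some line bundle. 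Hence it also acquires shift $n$.
\item Krull--Schmidt then gives $\phi(\coh\P)\subseteq\coh\P'[n]$, and the same argument for $\phi^{-1}$ gives equality.
\end{itemize}

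With this repair, your direct argument proves the second claim in a self-contained way, which is more explicit than the paper's appeal to \cite{LenzingMeltzer}. For the direct product decomposition you still rely, as the paper does, on \cite[Prop.~4.2]{LenzingMeltzer}: an autoequivalence preserving the heart is determined by its restriction.
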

\begin{proof}
See~\cite[Prop. 4.1, Prop. 4.2]{LenzingMeltzer} for the first statement. The second  is proved in the same way as~\cite[Prop. 4.1]{LenzingMeltzer}.
\end{proof}

\medskip
From now on we restrict ourselves to weighted projective lines of type $(2,2,2,2,2)$. The degree of $\bar\omega$ is $1/2>0$ in this case, so these   weighted projective lines are wild. Let $Q_\P:=\{p_1,\ldots,p_5\}$ be the weighted points of $\P$. For any $p_i\in Q_\P$ there are two simple sheaves $F_i,F'_i$ supported at~$p_i$; denote $H_\P:=\{F_i,F'_i\}_{i=1\ldots 5}$. One has $\tau F_i\cong F'_i, \tau F'_i\cong F_i$; moreover, sheaves in $H_\P$ are the only simple objects in $\coh \P$ forming $\tau$-orbits of length $2$. Hence any autoequivalence $\phi$ of $\coh\P$ preserves $H_\P$ and is compatible with the map $\pi\colon H_\P\to Q_\P$: if $f\in\Aut(\P)$ is induced by $\phi$ then the sheaves $\phi(F_i)$, $\phi(F'_i)$ are supported at the point $f(p_i)$. We obtain the following.
\begin{lemma}
\label{lemma_AutCohP2}
For a weighted projective line $\P$ of type $(2,2,2,2,2)$ there is a commutative diagram, where the rows are exact sequences:
\begin{equation}
\label{eq_AutP}
    \xymatrix{
\{e\}\ar[r] & \L\ar[r]\ar[d] &\Aut(\coh\P)\ar[r]\ar[d] &\Aut(\P)\ar[r]\ar[d] &\{e\}\\
\{e\}\ar[r] & \Aut(H_\P/Q_\P)\ar[r] &\Aut(H_\P,Q_\P)\ar[r] &\Aut(Q_\P)\ar[r] &\{e\}.
    }
\end{equation}
\end{lemma}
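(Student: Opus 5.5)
The plan is to define the three vertical maps and then check exactness of the rows and commutativity of the two squares. The top row is the exact sequence of Lemma~\ref{lemma_AutCohP}. The bottom row is~\eqref{eq_sesB5}.

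\textbf{The middle and right vertical maps.} Take an autoequivalence $\phi\in\Aut(\coh\P)$. Since $\phi$ commutes with the Serre functor up to isomorphism, it commutes with $\tau$ and sends simple objects to simple objects. Hence it permutes the isomorphism classes of simple sheaves lying in $\tau$-orbits of length $2$, i.e.\ it permutes $H_\P$. Let $f=\phi_\P$ be the isomorphism of Lemma~\ref{lemma_new}. Then $\phi(\coh_{p}\P)=\coh_{f(p)}\P$, so $\phi$ sends the simple sheaves at $p_i$ to the simple sheaves at $f(p_i)$. Because $f$ preserves weights, $f$ also permutes $Q_\P$. We therefore send $\phi$ to the pair $(\phi|_{H_\P}, f|_{Q_\P})\in\Aut(H_\P,Q_\P)$; the compatibility $\pi\phi=f\pi$ is exactly the support statement just made. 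Isomorphic functors act identically on isomorphism classes, so this is well defined on $\Aut(\coh\P)$, and it is a homomorphism because the action on isomorphism classes is functorial. The right vertical map is $f\mapsto f|_{Q_\P}$, which is a homomorphism. The right square commutes precisely because $\phi$ is sent to $\phi_\P$ in the top row, by the construction of Lemma~\ref{lemma_AutCohP} via Lemma~\ref{lemma_new}.

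\textbf{The left vertical map.} An element $\ox\in\L$ maps to the twist $F\mapsto F(\ox)$, which induces the identity on $\P$. So its image in $\Aut(H_\P,Q_\P)$ lies in the kernel of the projection to $\Aut(Q_\P)$, namely in $\Aut(H_\P/Q_\P)$. We take the left vertical map to be this restriction; the left square then commutes automatically. Explicitly, using the facts recorded above ($F(\oc)\cong F$, and $F(\ox_j)\cong F$ for $j\ne i$, while $\ox_i$ swaps $F_i$ and $F'_i$), the generator $\ox_i$ goes to the transposition of the fibre over $p_i$ and $\oc$ goes to the identity. This explicit description is not needed for the lemma itself.

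\textbf{Expected obstacle.} The only delicate point is making sure that, for non-$\kk$-linear $\phi$, the point map of Lemma~\ref{lemma_new} is the one appearing in the exact sequence of Lemma~\ref{lemma_AutCohP}. I would resolve this by taking both to be defined through the action on $\tau$-orbits of simple objects, as in~\cite{LenzingMeltzer}, so that they agree by construction.
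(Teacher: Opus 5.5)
Your proposal is correct and follows the paper's own argument, given in the paragraph just before the lemma. Autoequivalences commute with the intrinsic $\tau$, so they permute the simple objects lying in $\tau$-orbits of length $2$, i.e.\ $H_\P$, compatibly with the induced point map $\phi_\P$ on $Q_\P$; the left vertical map is the restriction to $\L$, which lands in $\Aut(H_\P/Q_\P)$ because twists act trivially on $\P$. Your explicit description, with $\ox_i$ acting as the transposition over $p_i$ and $\oc$ acting trivially, is a correct extra detail that the paper leaves implicit.
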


\begin{remark}
Note that $\Aut_{\kk}(\P)$ is  a subgroup in $\Aut(Q_\P)\cong S_5$ by Remark~\ref{remark_AutPfinite}. Moreover, for general $\P$ the group $\Aut_{\kk}(\P)$ is trivial so that $\Aut_{\kk}(\coh \P)\cong \L$ by Lemma~\ref{lemma_AutCohP}.

Elements of finite order in $\Aut_\kk(\coh\P)$ form a subgroup denoted $ \Aut_\kk^{\tors}(\coh\P)$, which fits into an exact sequence (where $\L^{\tors}\subset \L$ is the torsion subgroup)
$$\{e\}\to \L^{\tors} \to \Aut_\kk^{\tors}(\coh\P) \to \Aut_\kk(\P)\to \{e\}, $$
which is a subsequence in the second row of~\eqref{eq_AutP}.
\end{remark}

\subsection{Quartic del Pezzo surfaces and their automorphisms}
We refer to~\cite{Manin_CubicForms} and~\cite{Dolgachev_ClassicalAlgebraicGeometry} for the basic definitions and results about del Pezzo surfaces. 
In this subsection we assume that the base field $\kk$ is algebraically closed and of characteristic $\ne 2$. 

Let $X$ be a del Pezzo surface over $\kk$ of degree $4$, that is, a smooth irreducible projective surface over $\kk$ with $-K_X$ ample and $(-K_X)^2=4$. Such a surface is isomorphic to the blow-up of $\P^2$ at five general points. Any such isomorphism defines a standard basis $H,E_1,\ldots,E_5$ in the Picard group $\Pic(X)$, where $H$ is the pull-back of a line and $E_1,\ldots,E_5$ are exceptional divisors. The intersection form in this basis is given by $H^2=1, E_i^2=-1, H\cdot E_i=E_i\cdot E_j=0$ for $i\ne j$,   and the canonical class is $K_X=-3H+E_1+\ldots+E_5$. Let $H_X\subset \Pic(X)$ be the set of $0$-classes:
$$H_X=\{h\in \Pic(X)\mid h^2=0, h\cdot K_X=-2\}.$$
Any $h\in H_X$ is represented by a smooth rational curve and the linear system $|h|$ defines a conic bundle $X\to \P^1$. There are $10$ classes in $H_X$; in the standard basis they are 
$$H-E_1,\ldots,H-E_5, 2H-E_2-E_3-E_4-E_5,\ldots,2H-E_1-E_2-E_3-E_4.$$
Divisors in $H_X$ are naturally divided into pairs: in any such pair $h,h'$ one has $h+h'=-K_X$.

The anticanonical system $|-K_X|$ defines an embedding $\phi\colon X\to \P^4=\P(\rH^0(X,\omega_X^{-1}))$, whose image is the intersection of a pencil of quadrics. More precisely, the family of quadrics containing the image of $\phi$ is one-dimensional; we will denote this family $\P(\rH^0(\P^4,I_{\phi(X)}(2)))\cong \P^1$ by $P_X$. A general quadric in $P_X$ is non-degenerate; there are five  degenerate quadrics in $P_X$, all of them of  corank $1$. We will denote the subset of degenerate quadrics in $P_X$ by $Q_X$.  We define the weighted projective line $\P_X$ as $P_X$ with weights $2$ at the five points of $Q_X$. The association $X\mapsto \P_X$ is canonical in the sense that it extends to isomorphisms between surfaces: any isomorphism $f\colon X\xra{\sim}Y$ of surfaces induces an isomorphism of projective lines $P_X\xra{\sim}P_Y$ that respects the weighted points, that is, induces an isomorphism $\P_X\to\P_Y$. The isomorphism class of $X$ is uniquely determined by the weighted projective line $\P_X$. More precisely, one has the following:
\begin{lemma}
    \label{lemma_diagonal}
Let $X,Y$ be del Pezzo surfaces of degree $4$. For any (not necessarily $\kk$-linear) isomorphism $g\colon \P_X\xra{\sim}\P_Y$ there is an isomorphism $f\colon X\xra{\sim}Y$ inducing $g$. If $g$ is $\kk$-linear then $f$ is $\kk$-linear.
\end{lemma}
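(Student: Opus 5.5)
We plan to use the classical normal form of a pencil of quadrics. Over an algebraically closed field of characteristic $\ne 2$, a smooth quartic del Pezzo surface $X\subset\P^4$ can be written, in suitable coordinates $x_0,\ldots,x_4$ on $\P^4$, as
$$\sum_i x_i^2=0,\qquad \sum_i a_i x_i^2=0,$$
with pairwise distinct $a_i\in\kk$. In these coordinates the pencil $P_X$ is $\{\lambda\sum x_i^2-\mu\sum a_ix_i^2\}$, and the degenerate members are exactly those with $(\lambda:\mu)=(a_i:1)$. So $X$ is recovered from its pencil by a simultaneous diagonalization of the two quadratic forms. The plan is to make this recovery functorial in $\P_X$, including for non-$\kk$-linear maps.

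\textbf{Step 1: reduce to a normal form.} Choose an affine coordinate on $P_X$, so that the five points of $Q_X$ become $a_1,\ldots,a_5\in\kk$ (after an automorphism of $P_X$, none of them is at infinity). I will show that $X$ is isomorphic to the surface
$$X(a):=\Big\{\sum x_i^2=0,\ \sum a_ix_i^2=0\Big\},$$
via an isomorphism under which the induced identification $P_X\cong P_{X(a)}$ is the chosen coordinate. To do this, pick two quadrics $q_0,q_\infty\in P_X$ with $q_\infty$ nondegenerate. The roots of $\det(q_0-tq_\infty)$ are the $a_i$, and they are simple because the five degenerate quadrics are distinct. Hence the pair $(q_0,q_\infty)$ is simultaneously diagonalizable: the eigenvectors of $q_\infty^{-1}q_0$ are $q_\infty$-orthogonal. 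Rescaling each eigenvector by a square root, which exists since $\kk$ is algebraically closed, puts $q_\infty$ in the form $\sum x_i^2$ and $q_0$ in the form $\sum a_ix_i^2$.

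\textbf{Step 2: compatibility with $g$.} Let $g\colon\P_X\xra{\sim}\P_Y$ be an isomorphism. It is the composition of a field automorphism $\sigma\in\Aut(\kk)$ with a $\kk$-linear map. Choose affine coordinates on $P_X$ and $P_Y$ in which $g$ acts by $t\mapsto\sigma(t)$; this is possible after composing with a $\kk$-linear Möbius transformation, which we absorb into the choice of coordinates. Then $Q_Y=\{\sigma(a_i)\}$, so $Y\cong X(\sigma(a))$. Now let $f$ be the map $X(a)\to X(\sigma(a))$ given by applying $\sigma$ to the coordinates. It is an isomorphism of schemes, it is $\sigma$-semilinear, and it induces $t\mapsto\sigma(t)$ on the pencils. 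Composing with the normal-form isomorphisms from Step 1 gives $f\colon X\xra{\sim}Y$ inducing $g$. If $g$ is $\kk$-linear, then $\sigma=\id$, so every map in the construction is $\kk$-linear and $f$ is too.

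\textbf{Main obstacle.} The hard part is Step 1: keeping track of the induced map on pencils, not just the abstract isomorphism of surfaces. The construction chooses a Möbius coordinate, and we must check that the normal-form isomorphism induces exactly that coordinate. This requires care with the scaling of the pencil parameter. The point is that the map $X\to\P^4$ is canonical, given by $|-K_X|$, so an isomorphism of surfaces induces a projective-linear (or semilinear) map of $\P^4$ carrying one pencil to the other. That map visibly sends the quadric $\lambda q_\infty-\mu q_0$ to the quadric with the same parameters.

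A remaining concern is whether there is enough freedom to match an arbitrary $g$. Fixing three points of $Q_X$ and of $Q_Y$ would over-constrain the choice. Instead, we put no normalization on the $a_i$ beyond their being finite, and let the single choice of coordinates on $P_Y$ absorb the linear part of $g$. Then every $g$ is realized.
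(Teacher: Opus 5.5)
Your proposal is correct and follows essentially the paper's route: simultaneous diagonalisation of the pencil to a normal form in which the pencil parameter is visible, and, for non-$\kk$-linear $g$, a Galois twist. Your coordinate-wise map $x\mapsto\sigma(x)$ is the concrete form of the paper's ``twisting the $\kk$-linear structure on $P_Y$''. The only difference is cosmetic: you transport an arbitrary affine coordinate from $P_X$ to $P_Y$ via $g$, whereas the paper labels $Q_Y$ by $g(Q_X)$, normalises three degenerate quadrics to $\infty,0,1$ on both sides, and uses that a $\kk$-linear automorphism of $\P^1$ fixing three points is the identity. So, contrary to your remark, fixing three points compatibly with $g$ does not over-constrain anything.
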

\begin{proof}
First assume that $g$ is $\kk$-linear. 

Let $Q_X=\{q_{X,i}\}_{i=1\ldots 5}$,  $Q_Y=\{q_{Y,i}\}_{i=1\ldots 5}$, so that $g(q_{X,i})=q_{Y,i}$ for all $i$.
Embed $X$ anticanonically  into $\P^4$ as the intersection of a pencil of quadrics. By~\cite[Th. 8.6.2]{Dolgachev_ClassicalAlgebraicGeometry}, quadrics in the pencil are simultaneously  diagonalisable in certain coordinates. Up to renumbering of  coordinates we can assume that for $i=1,\ldots, 5$ the degenerate quadric $q_{X,i}$ is given by the equation $\sum_{j=1}^5 \lambda_{i,j}x_j^2=0$
where $\lambda_{i,i}=0$ and $\lambda_{i,j}\ne 0$ for $i\ne j$. Rescaling coordinates we can then assume that $q_{X,1}$ is given by $x_2^2+x_3^2+x_4^2+x_5^2=0$.
Finally, rescaling the coordinate $x_1$ we can additionally assume that $q_{X,2}$ is given by $x_1^2+x_3^2+\lambda_X x_4^2+\mu_X x_5^2=0$. Then the pencil of quadrics containing $X$ is given by 
\begin{equation}
\label{eq_diagonal_X}
t_0(x_1^2+x_3^2+\lambda_X x_4^2+\mu_X x_5^2)-t_1(x_2^2+x_3^2+x_4^2+x_5^2)=0, \quad (t_0:t_1)\in \P^1=P_X.
\end{equation}
The degenerate points $q_{X,1}$, $q_{X,2}$, $q_{X,3}$, $q_{X,4}$, $q_{X,5}$ of this pencil are respectively $t_1/t_0=\infty$, $0,1,\lambda_X,\mu_X$. Similarly we find a coordinate system for the anticanonical embedding of $Y$. A $\kk$-linear isomorphism $\P_X\xra{\sim}\P_Y$ is uniquely determined by the images of three points, so in the chosen coordinates on $P_X$ and $P_Y$ the morphism~$g$ must be the identity,  therefore $\lambda_X=\lambda_Y$, $\mu_X=\mu_Y$. Equations~\eqref{eq_diagonal_X} for $X$ and $Y$ are the same, hence the identity map $\P^4\to\P^4$ restricts to an isomorphism between $X$ and $Y$ compatible with $g\colon \P_X\to\P_Y$. 

Now consider an arbitrary isomorphism $g\colon \P_X\to\P_Y$. Note that the $\kk$-linear structure is not involved in the statement of the lemma, so it suffices to prove the statement for \emph{some} $\kk$-linear structures on $P_X$ and $P_Y$. The isomorphism $g$ induces an isomorphism on scalars $g_{\kk}\colon \kk\to\kk$ given as $\rH^0(P_X,\cO_{P_X})\xra{\sim}\rH^0(P_Y,\cO_{P_Y})$, so that there is a commutative diagram 
$$\xymatrix{ P_X\ar[r]^g\ar[d]  & P_Y\ar[d] \\ \Spec\kk\ar[r]^{g_\kk} & \Spec\kk.}$$
Twisting the $\kk$-linear structure on $P_Y$ we may assume that $g$ is $\kk$-linear and so $g$ is induced by an isomorphism $f\colon X\xra{\sim}Y$ (which is $\kk$-linear with respect to the twisted $\kk$-linear structure on $Y$).    
\end{proof}

There is a natural bijection between pairs of divisors in $H_X$ and points of $Q_X$. Indeed, any degenerate quadric of corank one from $Q_X$ is the cone over a non-degenerate quadric in $\P^3$ so that the vertex of the cone is not contained in $X$. Projection from the vertex gives a regular map $X\to \P^1\times \P^1$ of degree $2$, hence two conic fibrations on $X$, hence two zero-classes adding up to a hyperplane section, that is, to $-K_X$. Therefore, there is a natural $2$-to-$1$ map $$\pi_X\colon H_X\to Q_X.$$ 

Our next goal is to understand the group $\Aut(X)$ of all (not necessarily $\kk$-linear) automorphisms of $X$.
Any automorphism of $X$ induces an automorphism of $\P_X$; thus we get a   homomorphism $\Aut(X)\to \Aut(\P_X)$. Also, any automorphism of $X$ induces an automorphism of the Picard group $\Pic(X)$ preserving $K_X$ and the intersection form, hence induces an automorphism of $H_X$ compatible with $\pi_X$. One gets a homomorphism $\Aut(X)\to \Aut(H_X,Q_X)$. Its image is in $\Aut^{\ev}(H_X,Q_X)$; in fact, the image of the homomorphism 
$\Aut(\Pic(X)) \to \Aut(H_X,Q_X)$ is in $\Aut^{\ev}(H_X,Q_X)$. To see this, recall that the automorphism group of $\Pic(X)$ is the Weyl group of the root system of type $D_5$, formed by $(-2)$-classes in $K_X^\perp\subset \Pic(X)$ (see~\cite[Th. 23.9]{Manin_CubicForms}).
Let $\oH_X\subset \Pic(X)_\Q$ be the orthogonal projection of $H_X$ onto $K_X^\perp$: $\oH_X=\{h+1/2K_X\mid h\in H_X\}$. Choosing one vector in each pair in~$\oH_X$ one can write $\oH_X=\{\pm \oh_1,\ldots,\pm \oh_5\}$, where $\oh_1,\ldots,\oh_5$ is an orthonormal basis  in $\Pic(X)_\Q$ with respect to minus the intersection form. Then the roots are the vectors $\pm \oh_i\pm \oh_j$ for $i\ne j$, and reflections in these roots generate the Weyl (or Coxeter) group of type $D_5$, which acts by even permutations of the set $\oH_X$ (and linear transformations of $\Pic(X)_\Q$ inducing odd permutations of $\oH_X$ do not preserve $\Pic(X)\subset \Pic(X)_\Q$). 

Denote by $\Aut^0(X)$ the group of $\kk$-linear automorphisms of $X$ that act trivially on $P_X$.

Homomorphisms $\Aut(X)\to \Aut(\P_X)$ and 
$\Aut(X) \to \Aut^{\ev}(H_X,Q_X)$ induce  homomorphisms 
$$\alpha_0\colon \Aut^0(X)\to\Aut^{\ev}(H_X/Q_X)$$ and
$$\alpha\colon \Aut(X) \to \Aut^{\ev}(H_X,Q_X)\times_{\Aut(Q_X)} \Aut(\P_X).$$
\begin{lemma} 
\label{lemma_AutX}
In the above notation
\begin{enumerate}
    \item Homomorphism $\alpha_0$ is an isomorphism.
    \item Homomorphism $\alpha$ is an isomorphism.
    \item There is a commutative diagram where rows are exact sequences
\begin{equation}
\label{eq_AutX}
    \xymatrix{
\{e\}\ar[r] & \Aut^0(X)\ar[r]\ar[d]^{\alpha_0} &\Aut(X)\ar[r]\ar[d] &\Aut(\P_X)\ar[r]\ar[d] &\{e\}\\
\{e\}\ar[r] & \Aut^{\ev}(H_X/Q_X)\ar[r] &\Aut^{\ev}(H_X,Q_X)\ar[r] &\Aut(Q_X)\ar[r] &\{e\}.
    }
\end{equation}
\end{enumerate}
\end{lemma}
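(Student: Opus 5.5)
I would start with part (1), working in the diagonal coordinates from the proof of Lemma~\ref{lemma_diagonal}. Embed $X$ anticanonically as $\{\sum_j x_j^2=0,\ \sum_j a_j x_j^2=0\}\subset\P^4$ with pairwise distinct $a_j$. This is possible since, after a change of pencil basis, the pencil~\eqref{eq_diagonal_X} has this form. The $j$-th degenerate quadric is the member $\sum(a_j-a_i)x_i^2$, with vertex the coordinate point $e_j$.

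The sign changes $x_j\mapsto \pm x_j$ form a group $(\Z/2)^5/\{\pm1\}\cong(\Z/2)^4$. These maps are $\kk$-linear automorphisms of $X$ that preserve every quadric in the pencil, so they lie in $\Aut^0(X)$. Conversely, an element of $\Aut^0(X)$ extends to a linear automorphism of $\P^4$, since it preserves $-K_X$. It preserves each quadric of the pencil, and in particular each degenerate one, so it fixes each vertex $e_j$. Hence it is diagonal, $x_j\mapsto c_jx_j$. Preserving $\sum x_j^2$ and $\sum a_jx_j^2$ up to scalars forces $c_j^2$ to be constant, because the $a_j$ are distinct. So $\Aut^0(X)\cong(\Z/2)^4$.

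To see that $\alpha_0$ is injective, take the reflection $x_j\mapsto -x_j$. It swaps the two rulings of the cone $q_k$ exactly when $k\neq j$. Indeed, for $k\ne j$ the quadric $q_k$ restricted to the hyperplane $x_k=0$ is a smooth quadric in $\P^3$, and a reflection of that $\P^3$ has determinant $-1$, so it exchanges the rulings. For $k=j$, the map is the reflection through the vertex direction, and it acts trivially on the base $\P^3$ of the cone. Thus $x_j\mapsto -x_j$ maps to the element of $\Aut(H_X/Q_X)$ that swaps the fibres over all $q_k$ with $k\ne j$. These four-fold swaps generate $\Aut^{\ev}(H_X/Q_X)$, so $\alpha_0$ is surjective. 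Both groups have order $16$, so $\alpha_0$ is an isomorphism.

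Next I would establish the diagram~\eqref{eq_AutX}, the bottom row of which is already known to be exact. The image of $\Aut(X)$ lies in $\Aut^{\ev}(H_X,Q_X)$, as shown above, and the right square commutes because $\pi_X$ is natural.

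For exactness of the top row, surjectivity of $\Aut(X)\to\Aut(\P_X)$ is Lemma~\ref{lemma_diagonal}. The main subtlety is the kernel, which must equal $\Aut^0(X)$, the $\kk$-linear automorphisms acting trivially on $P_X$. Suppose $f$ acts trivially on $P_X$. Then $f$ acts trivially on $\rH^0(P_X,\cO)=\kk$, because $P_X$ is a $\kk$-scheme and an automorphism over the identity of $P_X$ is over the identity of $\Spec\kk$. Hence $f$ is $\kk$-linear. With this, the top row is exact.

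For part (2), I would apply the five lemma (the group version), or a direct diagram chase, to the map of extensions
$$\Aut(X)\to \Aut^{\ev}(H_X,Q_X)\times_{\Aut(Q_X)}\Aut(\P_X).$$
The target sits in an exact sequence
$$\{e\}\to\Aut^{\ev}(H_X/Q_X)\to \text{target}\to\Aut(\P_X)\to\{e\},$$
obtained by pulling back the bottom row of~\eqref{eq_AutX} along $\Aut(\P_X)\to\Aut(Q_X)$. The map $\alpha$ is compatible with these sequences: it restricts to $\alpha_0$ on kernels and to the identity on $\Aut(\P_X)$. Since $\alpha_0$ is an isomorphism, $\alpha$ is an isomorphism.

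I expect the main obstacle to be checking which rulings the sign changes swap, i.e.\ the surjectivity of $\alpha_0$. The alternative is a counting argument, which would need care about non-$\kk$-linear elements, and the reasoning about the kernel above handles exactly that issue.
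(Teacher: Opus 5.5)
Your proof is correct, but it reaches (1) and (2) by a different route from the paper.

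The paper never computes all of $\Aut^0(X)$, nor how individual sign changes act on $H_X$. It first proves injectivity of $\alpha$ directly. An automorphism trivial on $\P_X$ and on $H_X$ is $\kk$-linear. It acts trivially on $\Pic(X)$, because $H_X$ spans $\Pic(X)_\Q$. Hence it fixes all $(-1)$-curves and descends to a $\kk$-linear automorphism of $\P^2$ fixing five general points, so it is the identity. The paper then gets (1) by counting: the sign changes form a subgroup of order $16$ in $\Aut^0(X)$, and this subgroup injects into $\Aut^{\ev}(H_X/Q_X)$, which also has order $16$. Surjectivity of $\alpha$ follows from Lemma~\ref{lemma_diagonal} and (1).

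You instead determine $\Aut^0(X)$ completely by linear algebra: its elements are diagonal because the five vertices are fixed, and $c_j^2$ is constant because the quadrics are preserved. You show $\alpha_0$ is onto using the determinant criterion for swapping the rulings of the cones. You then get both injectivity and surjectivity of $\alpha$ from the short five lemma, after checking separately that the kernel of $\Aut(X)\to\Aut(\P_X)$ consists of $\kk$-linear maps.

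What each route buys:
\begin{itemize}
\item Your route gives more explicit information: the image of each sign change, and the fact that $\Aut^0(X)$ is exactly the sign-change group. It avoids the blow-down argument, at the cost of the ruling computation.
\item The paper's route needs no sign bookkeeping on rulings. It proves injectivity uniformly for all automorphisms, including non-$\kk$-linear ones.
\end{itemize}

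One small slip: the sentence introducing the reflections says you are proving that $\alpha_0$ is injective. What you actually prove there is surjectivity, which together with $|\Aut^0(X)|=16$ is enough.
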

\begin{proof}
    For (1),
    take  $X$ as given by the equations~\eqref{eq_diagonal_X} and consider automorphisms of $X$ given by $x_i\mapsto  \pm x_i$ (with arbitrary sign choices). This defines a subgroup of $\Aut^0(X)$ isomorphic to $(\Z/2\Z)^4$, since such automorphisms preserve all quadrics~\eqref{eq_diagonal_X}. Recall that $\Aut^{\ev}(H_X/Q_X)$ is also isomorphic to $(\Z/2\Z)^4$; then (1) follows from the injectivity of  $\alpha$ that we check next.

    For (2), we first check that $\alpha$ is injective. Assume $f\colon X\xra{\sim}X$ is an automorphism inducing the identity on $\P_X$ and on $H_X$. Then $f$ induces the identity on $\Spec\kk$, that is, $f$ is $\kk$-linear. Also, $f$ must act trivially on $\Pic(X)$, which is generated by $H_X$ over $\Q$. Hence $f$ preserves all $(-1)$-curves on~$X$. Let $p\colon X\to \P^2$ be a contraction of  five $(-1)$-curves, then $f$ induces a $\kk$-linear automorphism on~$\P^2$ which preserves five  points in general position and hence is the identity. Therefore $f=\id_X$.

    Now we check that $\alpha$ is surjective. Let $g\in \Aut (\P_X)$ and $s\in \Aut^{\ev}(H_X,Q_X)$ be automorphisms compatible on $Q_X$. By Lemma~\ref{lemma_diagonal} there is $\tf\in\Aut(X)$ inducing $g$ on $\P_X$. We want to adjust $\tf$ to get $f\in\Aut(X)$ that induces $g$ on $\P_X$ and $s$ on $H_X$. Let $\tf_H$ be the automorphism induced by $\tf$ on $H_X$; consider the difference $(\tf_H)^{-1}s$. This difference acts trivially on $Q_X$, therefore belongs to $\Aut^{\ev}(H_X/Q_X)$. By (1), there is $u\in\Aut(X)$ inducing $(\tf_H)^{-1}s$ on $H_X$ and  the identity on $\P_X$. Then~$f=\tf u$ is the desired automorphism of $X$.

    Statement (3) follows from the above discussion.
\end{proof}

\begin{remark}
\label{remark_AutXklinear}
    Restricting to $\kk$-linear automorphisms one gets a short exact sequence 
    \begin{equation}
        \label{eq_AutkX}
        \{ e\}\to \Aut^0(X)\to \Aut_{\kk}(X)\to \Aut_{\kk}(\P_X)\to \{ e\}
    \end{equation}    
    and an isomorphism $\Aut_{\kk}(X) \xra{\sim}\Aut^{\ev}(H_X,Q_X)\times_{\Aut(Q_X)} \Aut_{\kk}(\P_X)$.
    Note that $\Aut_{\kk}(\P_X)$ is  a subgroup of $\Aut(Q_X)\cong S_5$ by Remark~\ref{remark_AutPfinite}, so that~\eqref{eq_AutkX} is a subsequence of the second line in~\eqref{eq_AutX}. Moreover, for general $X$ the group $\Aut_{\kk}(\P_X)$ is trivial so that $\Aut_{\kk}(X)=\Aut^0(X)\cong (\Z/2\Z)^4$.
\end{remark}

\subsection{Orthogonal to the structure sheaf on quartic  del Pezzo surfaces}
As before, we assume that the base field $\kk$ is algebraically closed and of characteristic $\ne 2$. 
Let $X$ be a del Pezzo surface of degree $4$ over $\kk$. 
\begin{definition}
    Define the subcategory $\bA_X\subset \Db(X)$ as the right orthogonal subcategory to the structure sheaf:
    $$\bA_X:=\cO_X^\perp=\{F\in\Db(X)\mid \Hom(\cO_X,F[i])=0\quad\text{for all $i\in\Z$}\}.$$
\end{definition}
This is an admissible  triangulated subcategory of $\Db(X)$ fitting into a semi-orthogonal decomposition
$$\Db(X)=\langle\bA_X, \langle\cO_X\rangle\rangle.$$

The following fact is well-known (see~\cite[Th. 6.1]{BondalOrlov2002_ICM},~\cite[Cor. 5.7]{Kuznetsov2008_Derived-categories-of-quadric-fibrations},  where a much more general result concerning any complete intersection of quadrics is obtained): 
there is a $\kk$-linear exact  equivalence    
$\bA_X\cong \Db(\coh\P_X)$.
We will need a more refined statement, which is proved in~\cite{Kuznetsov2008_Derived-categories-of-quadric-fibrations}: 
\begin{prop}
\label{prop_BO}
Let  $\cB_X$ be  the sheaf of even parts of Clifford algebras on the projective line $P_X$ of quadrics containing $X$. The category $\modd \cB_X$ of coherent sheaves of $\cB_X$-modules is equivalent to $\coh\P_X$.
There is a $\kk$-linear exact  equivalence    
$$\psi_X\colon\Db(\modd \cB_X)\xra{\sim}\bA_X.$$
\end{prop}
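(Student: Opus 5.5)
The proposition has two parts, and I will follow Kuznetsov's construction for both.

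\emph{Step 1: the equivalence $\psi_X$.} Let $\cQ\subset P_X\times\P^4$ be the universal quadric of the pencil, with projection $\cQ\to P_X$ a flat family of quadrics in $\P^4$. Take $\cB_X=\cB_0$, the sheaf of even parts of the Clifford algebras of this family. Its components are
\[
\cB_0=\cO\oplus \Lambda^2W\otimes\cO(-1)\oplus\Lambda^4W\otimes\cO(-2),\qquad W=\rH^0(\P^4,\cO(1))^\vee .
\]
By \cite[Th.~5.5]{Kuznetsov2008_Derived-categories-of-quadric-fibrations}, the complete intersection of the pencil has
\[
\Db(X)=\langle \Db(\modd\cB_0),\ \cO_X\rangle .
\]
Here the embedding $\Db(\modd\cB_0)\to\Db(X)$ is a Fourier--Mukai type functor. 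Its kernel is the universal family of spinor (Clifford) modules restricted to $X\times P_X$, obtained from homological projective duality for the double Veronese embedding. The exceptional part is $\cO_X$ alone, because $2\cdot 2=4<5=\dim W$ contributes exactly one line bundle.

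Comparing with $\Db(X)=\langle\bA_X,\cO_X\rangle$, the image of this functor is $\cO_X^\perp=\bA_X$. We set $\psi_X$ to be the resulting functor. It is $\kk$-linear and exact because it is a Fourier--Mukai functor.

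\emph{Step 2: $\modd\cB_X\simeq\coh\P_X$.} Over a point of $P_X\setminus Q_X$ the quadric is smooth of odd dimension. There the even Clifford algebra is a matrix algebra $M_4(\kk)$, so $\cB_X$ is an Azumaya algebra away from $Q_X$. At a corank-one point $q\in Q_X$ the even Clifford algebra is a product $M_4(\kk)\times M_4(\kk)$ modulo a nilpotent ideal. Its two simple modules correspond to the two rulings of the cone, so there are exactly two simple torsion modules at $q$. This matches two simple sheaves of $\coh_q\P_X$, i.e.\ weight $2$.

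To see that $\cB_X$ is Morita equivalent to a hereditary order whose module category is $\coh\P_X$, I would argue as follows. First, $\cB_X$ is a maximal hereditary order; this follows from the local structure above, since the discriminant divisor $Q_X$ is reduced. Second, the Brauer class of its generic fibre is trivial because $\kk$ is algebraically closed (Tsen). A hereditary order in $M_4(\kk(t))$ with trivial Brauer class and ramification index $2$ at five points is Morita equivalent to the canonical-algebra description of $\coh\P$ with weights $(2,2,2,2,2)$. This is the Reiten--van den Bergh / Lenzing identification of hereditary noncommutative curves.

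Alternatively, one can identify $\coh\P_X$ directly with sheaves on the root stack $P_X[\sqrt{Q_X}]$. By the Geigle--Lenzing construction the latter equals $\coh\P_X$. On the root stack, $\cB_X$ becomes the endomorphism algebra of a rank-$4$ vector bundle, namely the spinor bundle pulled back to the double cover. Either route shows that $\modd\cB_X$ is equivalent to $\coh\P_X$.

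\emph{Main obstacle.} The hard part is the local analysis at points of $Q_X$, where one must show $\cB_X$ is hereditary with exactly the predicted ramification. I would do this explicitly in the diagonal coordinates~\eqref{eq_diagonal_X}. Near $q_{X,i}$ the coordinate $x_i$ has coefficient a local parameter $t$. One then computes $\cB_0$ over $\kk[[t]]$ and checks that it is the standard hereditary order of $2\times 2$ block type in $M_4(\kk((t)))$, i.e.\ with $t$ in one corner block. This gives two simples interchanged by $\tau$, matching the description of $\coh_p\P$.
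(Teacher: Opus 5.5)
Your plan is the argument the paper relies on. The paper gives no proof of Proposition~\ref{prop_BO}; it cites \cite{Kuznetsov2008_Derived-categories-of-quadric-fibrations}. There $\psi_X$ comes from homological projective duality for the double Veronese embedding, with one exceptional line bundle since $5-2\cdot 2=1$. The equivalence $\modd\cB_X\simeq\coh\P_X$ comes from the local analysis of $\cB_0$ at simple degenerations, together with the vanishing of the Brauer class over $\kk=\bar\kk$. So the route is right. However, two local claims in your Step~2 are false as written. Both should be replaced by the computation you postpone to your last paragraph.

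\emph{The fibre at $q\in Q_X$.} The algebra $\cB_X\otimes\kk(q)$ is $16$-dimensional, so its semisimple quotient cannot be $M_4(\kk)\times M_4(\kk)$. Write the quadric over $R=\kk[[t]]$ as $t x_1^2+q'$ with $q'$ of rank $4$. Then $C_0=C_0(q')\oplus e_1C_1(q')$. At $t=0$ the summand $e_1C_1(q')$ is an $8$-dimensional square-zero ideal, so the semisimple quotient is $C_0(q')\cong M_2(\kk)\times M_2(\kk)$. Your conclusion of two simple modules survives.

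\emph{Hereditary, not maximal.} $\cB_X$ is hereditary but \emph{not} maximal at the points of $Q_X$. A maximal order in the split algebra $M_4(\kk((t)))$ is conjugate to $M_4(\kk[[t]])$ and has one simple module. If $\cB_X$ were maximal everywhere, it would be Azumaya and Morita trivial, giving $\modd\cB_X\simeq\coh P_X$ with no weights. The correct statement is your own local model. Using $C(q')=\End(S^+\oplus S^-)$ and $e_1^2=t$, one finds over $R$
\[
C_0\cong\begin{pmatrix}\End S^+ & \Hom(S^-,S^+)\\ t\Hom(S^+,S^-) & \End S^-\end{pmatrix}.
\]
This is a non-maximal hereditary order with two simple modules interchanged by $\tau$. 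It is this local structure, not maximality, that feeds into the dictionary between hereditary orders and weighted curves.

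\emph{The root-stack alternative.} There is no global double cover of $P_X\cong\P^1$ branched exactly at the five points of $Q_X$, because the branch degree is odd. The Azumaya algebra must therefore be constructed on the root stack itself, stack-locally, as Kuznetsov does. You then still need its Brauer class on the stack to vanish.
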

We explain below that
there is a canonically defined abelian subcategory $\cP_X$ in $\bA_X$ that is equivalent to $\modd \cB_X$ and to  $\coh\P_X$. 
\begin{lemma}
There exists a unique subcategory $\cP_X\subset \bA_X$ with the following  properties. 
\begin{enumerate}
    \item There is a $\kk$-linear exact equivalence $\Db(\modd\cB_X)\xra{\sim}\bA_X$ identifying $\modd \cB_X$ with $\cP_X$. 
    \item For any class $h\in H_X$, the sheaf $\cO_X(-h)$ belongs to $\cP_X$.
\end{enumerate}
\end{lemma}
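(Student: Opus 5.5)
Existence and uniqueness will be handled separately; uniqueness is the main point. Uniqueness comes from the structure of $\Db(\coh\P)$ for $\P$ wild, and existence from Proposition~\ref{prop_BO} together with a computation showing that the sheaves $\cO_X(-h)$ land in a single degree.

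For existence, I start from the equivalence $\psi_X\colon\Db(\modd\cB_X)\xra{\sim}\bA_X$ of Proposition~\ref{prop_BO} and set $\cP':=\psi_X(\modd\cB_X)$. For $h\in H_X$ the sheaf $\cO_X(-h)$ lies in $\bA_X$, since $\rH^\bullet(X,\cO_X(-h))=0$: the fibres of the conic bundle $|h|$ are rational curves, so $\cO_X(-h)$ restricted to a fibre is $\cO(-1)$ and all direct images vanish. It is exceptional, being a line bundle on a rational surface. Under $\Db(\modd\cB_X)\cong\Db(\coh\P_X)$, which is hereditary, an indecomposable object is a shift of a sheaf. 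Hence $\cO_X(-h)\cong\psi_X(M_h)[n_h]$ for some indecomposable $\cB_X$-module $M_h$ and some integer $n_h$.

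The key step is to show that $n_h$ does not depend on $h$. Then $\cP_X:=\cP'[-n]$ with $\psi_X\circ[-n]$ satisfies (1) and (2). To compare two classes $h,h'\in H_X$, I will compute $\Ext^\bullet(\cO_X(-h),\cO_X(-h'))=\rH^\bullet(X,\cO_X(h-h'))$ by Riemann--Roch and vanishing. For $h'\ne h$ one has $(h-h')^2=-2h\cdot h'$ and $K_X\cdot(h-h')=0$, so $\chi=1-h\cdot h'$. The products are $h\cdot h'=2$ for the complementary class $h'=-K_X-h$ and $h\cdot h'=1$ otherwise. The nonzero Ext group therefore lives in a single degree, degree $1$, since $h-h'$ is not effective. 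In $\coh\P_X$ a nonzero $\Hom$ or $\Ext^1$ between two sheaves constrains their relative shift to $\{0,1\}$ in one direction. Combined with the symmetric computation for $\Ext^\bullet(\cO_X(-h'),\cO_X(-h))$, this forces $n_h=n_{h'}$: in a hereditary category, $\Ext^1(A[a],B[b])\ne0$ with both objects sheaves forces $b-a\in\{0,1\}$, and symmetry pins down $a=b$. Alternatively, $\cO_X(-h)$ can be identified with $\psi_X$ of a line-bundle-type $\cB_X$-module via Kuznetsov's explicit functor, since these are the spinor-type modules attached to the rulings of the degenerate quadrics.

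For uniqueness, suppose $\cP_X$ and $\cP'_X$ both satisfy (1) and (2). Then $\phi:=\psi'^{-1}\circ\psi$ is a $\kk$-linear autoequivalence of $\Db(\coh\P_X)$, which is wild. By Lemma~\ref{lemma_AutDbCohP}, $\phi=\phi_0\circ[m]$ with $\phi_0$ preserving $\coh\P_X$, so $\cP'_X=\cP_X[m]$. By (2), $\cO_X(-h)$ lies in both $\cP_X$ and $\cP_X[m]$. A nonzero object of the heart of a bounded t-structure cannot lie in a nonzero shift of it, so $m=0$ and $\cP'_X=\cP_X$.

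I expect the main obstacle to be the existence step, namely that all ten $\cO_X(-h)$ land in the same shift of the heart. The Ext computation settles this cleanly only if the signs work out. If they do not, I would instead trace $\cO_X(-h)$ explicitly through Kuznetsov's equivalence.
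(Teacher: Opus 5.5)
\textbf{There is a genuine gap in the existence step, exactly where you expected trouble.} Your uniqueness argument is correct and coincides with the paper's: by Lemma~\ref{lemma_AutDbCohP} any two hearts satisfying (1) differ by a shift, and a nonzero object such as $\cO_X(-h)$ cannot lie in two different shifts of a heart. Your existence outline is also the paper's, which simply asserts that the hearts obtained for different $h$ coincide.

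The problem is your justification of that step. Your own Riemann--Roch computation gives $\chi(\cO_X(h-h'))=1-h\cdot h'=0$ for $h'\notin\{h,-K_X-h\}$. Since $\rH^0$ and $\rH^2$ vanish, this means $\Ext^\bullet(\cO_X(-h),\cO_X(-h'))=0$ in \emph{all} degrees. So for such pairs there is no nonzero Ext group and no constraint on $n_h-n_{h'}$. Only complementary classes are linked ($\Ext^1=\kk$ in both directions), which gives $n_h=n_{-K_X-h}$ and nothing more. The ten objects form five mutually orthogonal pairs: they are the simple sheaves at the five weighted points, and simples at distinct points are orthogonal. Hence no computation among the $\cO_X(-h)$ alone can show that the five pairs sit in the same shift, and this is not a sign issue. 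Your fallback via Kuznetsov's functor is only gestured at, and it mislabels the objects: the $\cO_X(-h)$ correspond to simple \emph{torsion} modules at the degenerate quadrics, not to line-bundle-type modules.

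\textbf{The missing idea is an auxiliary object of $\bA_X$ that links different pairs.} For instance, take a $(-1)$-curve $E$.
\begin{itemize}
\item Then $\cO_X(-E)\in\bA_X$, using $0\to\cO_X(-E)\to\cO_X\to\cO_E\to0$.
\item Riemann--Roch plus the same vanishing gives $\Hom^\bullet(\cO_X(-E),\cO_X(-h))=\rH^\bullet(X,\cO_X(E-h))\cong\kk^{E\cdot h}[-1]$. Here $E\cdot h=1$ for exactly one class in each pair.
\item The Serre functor computation of Lemma~\ref{lemma_HH}, $S_{\bA_X}(\cO_X(-h))\cong\cO_X(h+K_X)[1]$, shows that each $\cO_X(-h)$ is $\tau$-periodic and exceptional. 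Hence it is a shift $\psi_X(M_h)[n_h]$ of a simple torsion sheaf at a weighted point.
\item Write $\cO_X(-E)\cong\psi_X(N)[m]$. Then $N$ has nonzero Ext to torsion sheaves at five different points, so $N$ is a vector bundle.
\item Therefore $\Ext^1(N,M_h)\cong\Hom(M_h,\tau N)^*=0$. This forces $n_h=m-1$ for all five such $h$, hence for all ten.
\end{itemize}

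A minor slip: $\cO_X(-h)$ is the pull-back of $\cO_{\P^1}(-1)$ under the conic bundle, so it is trivial on fibres, not $\cO(-1)$. Its cohomology vanishes because $\rH^\bullet(\P^1,\cO(-1))=0$, not because the direct images vanish.
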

\begin{proof}
Subcategories in $\bA_X$ satisfying (1) exist by  Proposition~\ref{prop_BO}. Moreover, they all differ by a shift. Indeed, any two equivalences 
$\Db(\modd \cB_X)\to\bA_X$ differ by an  autoequivalence of $\Db(\modd \cB_X)$, and by Lemma~\ref{lemma_AutDbCohP} any autoequivalence of $\Db(\modd \cB_X)$ sends the heart $\modd \cB_X\subset \Db(\modd \cB_X)$ of the standard t-structure to its shift. Now we choose the shift to satisfy (2). An easy computation shows that for any class $h\in H_X$ the sheaf $\cO(-h)$ belongs to $\bA_X$. Recall that any indecomposable object in $\Db(\modd \cB_X)$ is a shift of a $\cB_X$-module. Therefore, for any $h\in H_X$ one can find a  subcategory satisfying (1) and containing  $\cO(-h)$. Clearly these categories are all the same for different $h$.    
\end{proof}

Any isomorphism $f\colon X\xra{\sim}X'$ between del Pezzo surfaces of degree $4$ induces an equivalence $\Db(X)\xra{\sim}\Db(X')$, given by the direct image functor $f_*$, which restricts to an equivalence $f_{\bA}\colon \bA_X\to\bA_{X'}$.
Clearly $f_{\bA}$ sends the set of sheaves  $\cO(-h), h\in H_X$ to the set of sheaves  $\cO(-h), h\in H_{X'}$, and therefore restricts to an equivalence $f_{\cP}\colon \cP_X\to\cP_{X'}$. We have seen that there is a weighted projective line intrinsic to the abelian category $\cP_X$, and we will hence denote  this weighted projective line by $\P_{\cP_X}$: 
the closed points of $\P_{\cP_X}$ are in bijection with the $\tau$-orbits of simple objects in $\cP_X$. In particular, any equivalence $\phi\colon \cP_X\xra{\sim}\cP_{X'}$ defines a canonical isomorphism $\phi_\P\colon \P_{\cP_X}\to\P_{\cP_{X'}}$. Since the category $\cP_X$ is equivalent to $\coh\P_X$ (non-canonically), the projective lines $\P_{\cP_X}$ and $\P_X$ are isomorphic. We claim that in fact they are isomorphic canonically:
\begin{lemma}
\label{lemma_epsilon}
There are $\kk$-linear isomorphisms 
$$\epsilon_X\colon \P_X\xra{\sim}\P_{\cP_X}, $$ 
for all  del Pezzo surfaces $X$ of degree $4$, providing a commutative triangle of functors
\begin{equation*}
X\mapsto \P_X, X\mapsto \cP_X, \cP_X\mapsto \P_{\cP_X}
\end{equation*}
between the categories of del Pezzo surfaces of degree $4$, weighted projective lines  of type $(2,2,2,2,2)$, and abelian categories equivalent to coherent sheaves on such weighted projective lines. 
Explicitly, for any isomorphism  $f\colon X\xra{\sim}X'$ of del Pezzo surfaces of degree $4$, 
the following diagram is commutative:
\begin{equation}
\label{eq_PPPP}
    \xymatrix{\P_X\ar[r]^{\epsilon_X} \ar[d]^{f_\P} & \P_{\cP_X} \ar[d]^{(f_\cP)_\P} \\
\P_{X'}\ar[r]^{\epsilon_{X'}}  & \P_{\cP_{X'}}.}
\end{equation}
\end{lemma}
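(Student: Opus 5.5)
We construct $\epsilon_X$ directly from the sheaves of $\cB_X$-modules, using the point-by-point nature of the identification $\modd\cB_X\cong\coh\P_X$.

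\textbf{Definition of $\epsilon_X$.} Let $q\in P_X(\kk)$ be a closed point, i.e.\ a quadric of the pencil containing $X$. The simple $\cB_X$-modules supported at $q$ are the simple modules over the fibre $\cB_X\otimes\kk(q)$, the even Clifford algebra of the quadric $q$.
\begin{itemize}
\item If $q$ is nondegenerate, this algebra is a matrix algebra and there is one simple module.
\item If $q$ is degenerate of corank one, the algebra has two simple modules. They correspond to the two rulings of the rank-four quadric, hence to the pair $\pi_X^{-1}(q)\subset H_X$.
\end{itemize}
Applying the equivalence of the preceding Lemma, $\modd\cB_X\cong\cP_X$, the simple objects of $\cP_X$ supported over $q$ form a single $\tau$-orbit, of length $1$ or $2$ accordingly. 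We define $\epsilon_X(q)$ to be the point of $\P_{\cP_X}$ given by this $\tau$-orbit.

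\textbf{Independence of choices.} The equivalence in property (1) of the Lemma is not unique, so we must check that $\epsilon_X$ does not depend on it. Two such equivalences differ by an autoequivalence of $\modd\cB_X\cong\coh\P_X$ that is $\kk$-linear and compatible with the identification.

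There is a cleaner route that avoids this issue. The object $\psi_X$ in Proposition~\ref{prop_BO} is a Fourier--Mukai functor with kernel on $X\times P_X$, constructed in~\cite{Kuznetsov2008_Derived-categories-of-quadric-fibrations}. It is $\cO_{P_X}$-linear: it intertwines the action of $\Db(P_X)$ by tensor products with a natural $P_X$-linear structure on $\bA_X$. Tensoring with $\cO(\oc)$ corresponds to twisting by the pullback of $\cO_{P_X}(1)$, so supports of torsion objects are preserved. Hence $\epsilon_X$ sends $q$ to the $\tau$-orbit of objects of $\cP_X$ annihilated by the ideal of $q$, which is intrinsic. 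Both lines carry weight $2$ exactly at $Q_X$, so $\epsilon_X$ is a bijection preserving weights.

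\textbf{$\epsilon_X$ is a $\kk$-linear isomorphism.} Lemma~\ref{lemma_new} gives a $\kk$-linear isomorphism $\phi_\P\colon\P_X\to\P_{\cP_X}$ induced by the chosen equivalence $\coh\P_X\simeq\modd\cB_X\simeq\cP_X$. By construction $\phi_\P=\epsilon_X$ pointwise. Therefore $\epsilon_X$ is a $\kk$-linear isomorphism.

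\textbf{Naturality.} Let $f\colon X\to X'$ be an isomorphism. It induces an isomorphism of the pencils, $f_\P\colon P_X\to P_{X'}$, and an isomorphism of Clifford algebras $\cB_X\to f_\P^*\cB_{X'}$. Kuznetsov's construction is functorial, with the kernel built from the universal quadric. Consequently $f_*\circ\psi_X\cong\psi_{X'}\circ(f_\P)_*$, up to the semilinear twist when $f$ is not $\kk$-linear.

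Restricting this to hearts, $f_\cP$ sends objects of $\cP_X$ supported at $q$ to objects of $\cP_{X'}$ supported at $f_\P(q)$. Therefore $(f_\cP)_\P(\epsilon_X(q))=\epsilon_{X'}(f_\P(q))$, which is exactly the commutativity of~\eqref{eq_PPPP}.

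\textbf{Main obstacle.} The hardest step is the functoriality of $\psi_X$ under isomorphisms $f$, in particular non-$\kk$-linear ones. One must check that the kernel of Kuznetsov's equivalence is compatible with the isomorphism of the universal quadric families. For non-$\kk$-linear $f$ one must also twist the base field, as in the proof of Lemma~\ref{lemma_diagonal}, to reduce to the $\kk$-linear case.
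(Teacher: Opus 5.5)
Your proposal is essentially the paper's proof. There, $\epsilon_X$ is the isomorphism given by Lemma~\ref{lemma_new} for Kuznetsov's fixed equivalence $\psi_X$, restricted (after a shift) to hearts $\modd\cB_X\xra{\sim}\cP_X$, with $\P_X$ identified with the $\tau$-orbits of simple $\cB_X$-modules via their supports; commutativity of~\eqref{eq_PPPP} then follows by restricting Kuznetsov's naturality square $f_\bA\circ\psi_X\cong\psi_{X'}\circ(f_\P)_*$ to hearts.

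The ``independence of choices'' detour and the appeal to an intrinsic $P_X$-linear structure on $\bA_X$ are unnecessary. The detour also could not succeed as stated: an arbitrary equivalence as in property (1) can be composed with a nontrivial element of $\Aut_\kk(\P_X)$, so $\epsilon_X$ genuinely depends on the choice. Simply fixing $\psi_X$, as you end up doing, is exactly what the paper does.
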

\begin{proof}
This follows from the proof of Proposition~\ref{prop_BO}. By~\cite{Kuznetsov2008_Derived-categories-of-quadric-fibrations}, equivalences $\psi_X$ from Proposition~\ref{prop_BO} are $\kk$-linear and natural in the following sense: for an isomorphism $f\colon X\xra{\sim}X'$, there is an isomorphism 
$(f_\P)_*\cB_X\xra{\sim}\cB_{X'}$ of sheaves of algebras on $P_{X'}$ and the direct image functor $(f_\P)_*$ induces equivalences 
$\modd \cB_X\xra{\sim}\modd ((f_\P)_*\cB_X)\xra{\sim}\modd \cB_{X'}$ which fit into a commutative diagram of equivalences:
$$\xymatrix{\Db(\modd\cB_X)  \ar[r]^-{\psi_X} \ar[d]^{(f_\P)_*}  & \bA_X \ar[d]^{f_{\bA}}\\ 
\Db(\modd\cB_{X'}) \ar[r]^-{\psi_{X'}} & \bA_{X'}.}$$
Possibly after some shift, these functors restrict to a commutative diagram of equivalences between abelian categories:
$$\xymatrix{\modd\cB_X  \ar[r] \ar[d]^{(f_\P)_*}  & \cP_X \ar[d]^{f_{\cP}}\\ 
\modd\cB_{X'} \ar[r] & \cP_{X'}.}$$
This diagram induces diagram~\eqref{eq_PPPP} of isomorphisms between associated weighted projective lines. Indeed, horizontal arrows provide identifications $\epsilon_X, \epsilon_{X'}$ of $\P_X$, $\P_{X'}$ with the weighted projective lines associated to $\cP_X, \cP_{X'}$  respectively. The right vertical arrow  induces an isomorphism $(f_{\cP})_\P$ and the left vertical arrow  induces  an isomorphism $f_{\P}$, so the claim follows.
\end{proof}

Recall the description of the autoequivalence group of $\cP_X$ from Lemma~\ref{lemma_AutCohP2}. Since the categories $\cP_X$ and $\coh\P_X$ are equivalent non-canonically, we will use invariant notation. Let $Q_{\cP_X}$ be the set of  $\tau$-orbits of simple objects in $\cP_X$  of length $2$, let $H_{\cP_X}$ be the set of simple objects in these  $\tau$-orbits. Then by Lemma~\ref{lemma_AutCohP2} we have 
\begin{lemma}
\label{lemma_AutCohP3}
There is a commutative diagram of groups where the rows are exact sequences:
\begin{equation}
\label{eq_AutCohP}
    \xymatrix{
\{e\}\ar[r] & \L_{\cP_X}\ar[r]\ar[d] &\Aut(\cP_X)\ar[r]\ar[d] &\Aut(\P_{\cP_X})\ar[r]\ar[d] &\{e\}\\
\{e\}\ar[r] & \Aut(H_{\cP_X}/Q_{\cP_X})\ar[r] &\Aut(H_{\cP_X},Q_{\cP_X})\ar[r] &\Aut(Q_{\cP_X})\ar[r] &\{e\}.
    }
\end{equation}
\end{lemma}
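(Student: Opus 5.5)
The plan is to transport Lemma~\ref{lemma_AutCohP2} along a chosen equivalence and then check that every object in diagram~\eqref{eq_AutCohP} is intrinsic to $\cP_X$, so that the choice of equivalence drops out.

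First we fix a (non-canonical) equivalence $\theta\colon \coh\P\xra{\sim}\cP_X$, where $\P$ is a weighted projective line of type $(2,2,2,2,2)$. Such an equivalence exists by Proposition~\ref{prop_BO} together with the lemma characterising $\cP_X$. Conjugation by $\theta$ identifies $\Aut(\coh\P)$ with $\Aut(\cP_X)$.

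Next we check that the other data are intrinsic to the abelian category and are matched by $\theta$.
\begin{enumerate}
\item The Serre functor, and hence $\tau$, is intrinsic. Therefore $\theta$ sends simple objects to simple objects and $\tau$-orbits to $\tau$-orbits, preserving their lengths.
\item Consequently $\theta$ identifies $H_\P$ with $H_{\cP_X}$ and $Q_\P$ with $Q_{\cP_X}$, compatibly with the $2$-to-$1$ maps to orbits.
\item By Lemma~\ref{lemma_new}, $\theta$ induces an isomorphism $\theta_\P\colon\P\xra{\sim}\P_{\cP_X}$. Here $\P_{\cP_X}$ is defined via $\tau$-orbits of simples, so this is a tautology.
\end{enumerate}

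We then define $\L_{\cP_X}$ as the kernel of $\Aut(\cP_X)\to\Aut(\P_{\cP_X})$. Concretely, this map sends an autoequivalence $\phi$ to the induced isomorphism $\phi_\P$ of Lemma~\ref{lemma_new}, which is a homomorphism by the canonicity in that lemma. Since $\theta_\P$ intertwines these maps, the kernel corresponds to $\L\subset\Aut(\coh\P)$. The top row is then exact because the top row of~\eqref{eq_AutP} is exact by Lemma~\ref{lemma_AutCohP}.

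The vertical maps are defined intrinsically:
\begin{itemize}
\item $\Aut(\cP_X)\to\Aut(H_{\cP_X},Q_{\cP_X})$ records the action of an autoequivalence on isomorphism classes of simples in $H_{\cP_X}$ and on their orbits;
\item $\Aut(\P_{\cP_X})\to\Aut(Q_{\cP_X})$ is restriction to the weighted points;
\item the left vertical map is the restriction of the middle one.
\end{itemize}
The bottom row is the exact sequence~\eqref{eq_sesB5}.

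Commutativity of the right square amounts to the compatibility in Lemma~\ref{lemma_new}: $\phi(\coh_p)=\coh_{\phi_\P(p)}$, so the orbit of $\phi(F_i)$ is the point $\phi_\P(p_i)$. Commutativity of the left square holds by construction. One could also simply transport the diagram~\eqref{eq_AutP} via $\theta$ and observe that all the identifications commute.

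The main obstacle is making sure the result does not depend on $\theta$. This follows because every map involved is defined without reference to $\theta$. The step deserving care is that $\phi\mapsto\phi_\P$ is a group homomorphism compatible with composition. For this I would invoke the functoriality of Lemma~\ref{lemma_new}: the point $\phi_\P(p)$ is determined by the condition $\phi(\coh_p)=\coh_{\phi_\P(p)}$, which is multiplicative under composition.
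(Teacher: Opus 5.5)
Your proposal is correct and is essentially the paper's argument. The paper simply rewrites Lemma~\ref{lemma_AutCohP2} in intrinsic notation: $Q_{\cP_X}$ is the set of $\tau$-orbits of simples of length $2$, and $H_{\cP_X}$ is the set of simples in those orbits; it then transports the lemma along the non-canonical equivalence $\cP_X\simeq\coh\P_X$, while you additionally spell out the checks the paper leaves implicit (independence of the chosen equivalence, $\phi\mapsto\phi_\P$ being a homomorphism via Lemma~\ref{lemma_new}, and commutativity of the two squares).
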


Recall that $Q_{\cP_X}$ is naturally identified with the set of weighted points on $\P_{\cP_X}$, and hence by Lemma~\ref{lemma_epsilon} with the set of weighted points of $\P_X$, which is $Q_X$. For $H_{\cP_X}$ we have the following
\begin{lemma}
    \label{lemma_HH}
    One has 
    $$H_{\cP_X}=\{\cO_X(-h), h\in H_X\}.$$
    In particular, $H_{\cP_X}$ is naturally identified with $H_X$.
\end{lemma}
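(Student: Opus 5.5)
Both sides of the claimed equality have exactly ten elements: $H_X$ has ten classes, and $H_{\cP_X}$ consists of five $\tau$-orbits of length $2$. So the plan is to show that each sheaf $\cO_X(-h)$, $h\in H_X$, is a simple object of $\cP_X$ lying in a $\tau$-orbit of length $2$, and that distinct classes give non-isomorphic objects. Distinctness is immediate, since $\cO_X(-h)\cong\cO_X(-h')$ forces $h=h'$ in $\Pic(X)$. By the defining property (2) of $\cP_X$ we already know $\cO_X(-h)\in\cP_X$.

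\emph{Step 1: the Serre functor.} The Serre functor of $\bA_X$ corresponds to $\tau[1]$ on $\cP_X$. I will compute it on $\cO_X(-h)$. Write $\rS_X=-\otimes\omega_X[2]$ for the Serre functor of $\Db(X)$. The Serre functor of $\bA_X=\cO_X^\perp$ is then $\rS_{\bA}=\rL_{\cO_X}\circ\rS_X$, up to the standard convention; if the sign or direction is off, I will use $\rS_{\bA}^{-1}=\rS_X^{-1}\circ\rR_{\cO_X}$ instead. We have $\rS_X(\cO_X(-h))=\cO_X(-h+K_X)[2]=\cO_X(-h')[2]$, where $h'=-K_X-h$ is the partner class of $h$. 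Since $\cO_X(-h')\in\bA_X$, the mutation $\rL_{\cO_X}$ acts trivially on it. Hence $\rS_{\bA}\cO_X(-h)\cong\cO_X(-h')[2]$.

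Both $\cO_X(-h)$ and $\cO_X(-h')$ lie in the heart $\cP_X$, and $\rS_{\bA}=\tau[1]$ with $\tau$ exact on $\cP_X$. So $\tau\cO_X(-h)$ equals $\cO_X(-h')[1]$ and also lies in $\cP_X$. This is a contradiction, since a nonzero object of the heart cannot also be a shift of one. This tells me I have mixed up the convention, and the plan is to fix it. With right orthogonal $\bA_X=\cO_X^\perp$, the inclusion has a left adjoint given by $\rL_{\cO_X}$, and the correct formula is $\rS_{\bA}^{-1}=\rL_{\cO_X}\circ\rS_X^{-1}$ or its analogue. Then $\rS_X^{-1}\cO_X(-h)=\cO_X(-h-K_X)[-2]=\cO_X(h')[-2]$, and $\cO_X(h')$ is not in $\bA_X$. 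Its left mutation is computed from the evaluation sequence: $|h'|$ is a base-point-free pencil, so $0\to\cO_X(-h')\to\cO_X^2\to\cO_X(h')\to0$, using $h'^2=0$. Since $\rH^\bullet(\cO_X(h'))=\kk^2$, this gives $\rL_{\cO_X}\cO_X(h')\cong\cO_X(-h')[1]$. Therefore $\rS_{\bA}^{-1}\cO_X(-h)\cong\cO_X(-h')[-1]$, i.e.\ $\tau\cO_X(-h)\cong\cO_X(-h')$. Applying the same computation to $h'$ gives $\tau^2\cO_X(-h)\cong\cO_X(-h)$, so the $\tau$-orbit has length exactly $2$.

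\emph{Step 2: simplicity.} In $\coh\P$ of type $(2,2,2,2,2)$, an indecomposable sheaf $F$ with $\tau^2F\cong F$ must be torsion, because vector bundles are not $\tau$-periodic in the wild case (shifting by $2\bar\omega$ changes the degree). A torsion indecomposable in a tube of rank $2$ that is fixed by $\tau^2$ is fixed by any $\tau$-power, so it is not yet determined; I will use the numerical invariant $\End=\kk$ to finish. We have $\Hom(\cO_X(-h),\cO_X(-h))=\kk$, and the only indecomposable torsion sheaves with one-dimensional endomorphism ring are the simples and the length-$2$ objects in a rank-$2$ tube. Those of length $2$ at a weighted point, or the simples at ordinary points, have $\tau$-period $1$ or $\tau F\cong F$. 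So I additionally use $\Ext^1(\cO_X(-h),\cO_X(-h))=\rH^1(\cO_X)=0$. Length-$2$ objects in a rank-$2$ tube, and simples at non-weighted points, have nonzero self-$\Ext^1$ by Serre duality, since $\tau F\cong F$ in those cases. Hence $\cO_X(-h)$ is exceptional of length $1$ at a weighted point, i.e.\ lies in $H_{\cP_X}$. The count of ten elements gives the equality, and the identification with $H_X$ is $h\mapsto\cO_X(-h)$.

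The main obstacle is getting the Serre functor and mutation conventions right in Step 1. If they prove troublesome, I would fall back on a direct check: verify $\Hom(\cO_X(-h),\cO_X(-h'))=0$ and $\Ext^1(\cO_X(-h'),\cO_X(-h))\ne0$, and compare with the $\Hom$/$\Ext$ pattern of simples in a rank-$2$ tube.
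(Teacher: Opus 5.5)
Your argument is correct in substance and follows the paper's route. Both compute the Serre functor of $\bA_X$ on $\cO_X(-h)$ by a mutation, deduce that $\cO_X(-h)$ and $\cO_X(-h')$ form a $\tau$-orbit of length $2$, and conclude by counting. The paper computes $L_{\bB}(\cO(-h))$ in $\Db(X)=\langle\bB,\langle\cO(-h)\rangle,\langle\cO\rangle\rangle$. You instead use $S_{\bA}^{-1}=L_{\cO_X}\circ S_X^{-1}$ together with the Koszul sequence $0\to\cO_X(-h')\to\cO_X^{2}\to\cO_X(h')\to 0$. Your result $S_{\bA}^{-1}\cO_X(-h)\cong\cO_X(-h')[-1]$ literally gives $\tau^{-1}\cO_X(-h)\cong\cO_X(-h')$, not $\tau$. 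After swapping $h$ and $h'$ it is equivalent to the paper's $S_{\bA_X}\cO_X(-h)\cong\cO_X(-h')[1]$. Your Step 2 usefully supplies the simplicity of $\cO_X(-h)$ in $\cP_X$, which the paper's proof passes over.

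Two slips need fixing.

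\emph{The discarded first attempt.} There $-h+K_X=-2h-h'$, not $-h'$. Moreover, the correct formula in that direction is $S_{\bA}=R_{\omega_X}\circ S_X$, not $L_{\cO_X}\circ S_X$. So that ``contradiction'' carried no information.

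\emph{The justification in Step 2.} You assert that length-$2$ objects at a weighted point have $\tau F\cong F$ (and ``$\tau$-period $1$''). This is false. $\tau$ swaps the two simples at a weighted point of weight $2$, hence moves the top of every indecomposable in that tube, so no indecomposable there is $\tau$-fixed. The conclusion you need, $\Ext^1(F,F)\neq 0$ for such $F$, is still true. Indeed $\Ext^1(F,F)\cong\Hom(F,\tau F)^*$, and the composition $F\twoheadrightarrow\mathrm{top}(F)\cong\mathrm{soc}(\tau F)\hookrightarrow\tau F$ is nonzero. Alternatively, $[F]$ equals the class of a point sheaf, so $\chi(F,F)=0$ while $\End F=\kk$. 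With this repair, exceptionality alone forces a $\tau$-periodic (hence torsion) indecomposable to be a simple at a weighted point, so the $\End=\kk$ filtering step is unnecessary.
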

\begin{proof}
    Recall that there are ten $0$-classes in $H_X$, divided into pairs, so that for any pair $h,h'$ one has $h+h'=-K_X$. We will show that the Serre functor $S_{\bA_X}$ on $\bA_X$ sends $\cO(-h)$ to $\cO(-h')[1]$, therefore $\tau(\cO(-h))\cong \cO(-h')$ and  $\tau^2(\cO(-h))\cong \cO(-h)$. It would follow that $\cO(-h)\in H_{\cP_X}$, and the statement would follow by comparing the cardinalities.

    We perform mutations in the semi-orthogonal decomposition $\Db(X)=\langle \bB, \langle\cO(-h)\rangle,\langle\cO\rangle\rangle$, see~\cite{BondalKapranov_Representable-functors-Serre-functors-mutations}.  First, we have 
    $$L_{\bB}(\cO(-h))\cong S_{\bA_X} (\cO(-h)).$$
    On the other hand, we have  
    \begin{multline*}
        L_{\bB}(\cO(-h))\cong L_{\langle\bB, \langle\cO\rangle\rangle}(R_{\langle\cO\rangle}(\cO(-h)))\cong 
     L_{\langle\bB, \langle\cO\rangle\rangle}(\cO(h)[-1])\cong \\ \cong S_{\Db(X)}(\cO(h)[-1])\cong \cO(h+K_X)[1]\cong \cO(-h')[1],
    \end{multline*}
    where the second isomorphism is by~\cite[Lemma 3.14]{ElaginSchneiderShinder} and the fourth is because the Serre functor on $\Db(X)$ is given by $-\otimes\cO(K_X)[2]$.
    The statement now follows.
\end{proof}

We observe 
%(but do not explain) 
that the natural maps $\pi_X\colon H_X\to Q_X$ and  $\pi_{\cP_X}\colon H_{\cP_X}\to Q_{\cP_X}$ are compatible with the identifications 
$H_X\cong H_{\cP_X}$ and $Q_X\cong Q_{\cP_X}$ described above.

\begin{lemma}
\label{lemma_cube}
    There is a natural homomorphism from diagram~\eqref{eq_AutX} to diagram~\eqref{eq_AutCohP}.
\end{lemma}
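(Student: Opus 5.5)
We need a morphism of diagrams, i.e. vertical maps from each group in \eqref{eq_AutX} to the corresponding group in \eqref{eq_AutCohP}, making all squares commute. I would construct the three maps on the top row (groups $\Aut^0(X)$, $\Aut(X)$, $\Aut(\P_X)$) and on the bottom row (groups $\Aut^{\ev}(H_X/Q_X)$, $\Aut^{\ev}(H_X,Q_X)$, $\Aut(Q_X)$), and then check commutativity of all faces of the resulting prism.

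For the top row, the middle map sends $f\in\Aut(X)$ to $f_\cP\in\Aut(\cP_X)$. This is well defined because $f_*$ preserves $\bA_X$ and permutes the sheaves $\cO(-h)$, hence preserves $\cP_X$. It is a homomorphism since $(fg)_*\cong f_*g_*$. The right map sends $g\in\Aut(\P_X)$ to $\epsilon_X g\epsilon_X^{-1}\in\Aut(\P_{\cP_X})$. Commutativity of the right square, $(f_\cP)_\P=\epsilon_X f_\P\epsilon_X^{-1}$, is exactly Lemma~\ref{lemma_epsilon} applied with $X'=X$.

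The left map is the subtle one. I would restrict the middle map to $\Aut^0(X)$. An element $f$ of $\Aut^0(X)$ acts trivially on $\P_X$, so by Lemma~\ref{lemma_epsilon} $f_\cP$ acts trivially on $\P_{\cP_X}$. By exactness of the top row of \eqref{eq_AutCohP}, $f_\cP$ then lies in $\L_{\cP_X}$. This gives the left square.

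For the bottom row, the maps are the inclusions $\Aut^{\ev}(H_X/Q_X)\subset\Aut(H_X/Q_X)$ and $\Aut^{\ev}(H_X,Q_X)\subset\Aut(H_X,Q_X)$, together with the identity on $\Aut(Q_X)$. We transport these along the identifications $H_X\cong H_{\cP_X}$ of Lemma~\ref{lemma_HH} and $Q_X\cong Q_{\cP_X}$ via $\epsilon_X$. The bottom squares then commute because these identifications are compatible with $\pi_X$ and $\pi_{\cP_X}$, as observed after Lemma~\ref{lemma_HH}.

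The remaining faces are the vertical squares joining the two diagrams.

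1. For the middle one, take $f\in\Aut(X)$. Its action on $H_X$ is $h\mapsto f_*h$, and $f_\cP$ sends $\cO(-h)$ to $f_*\cO(-h)\cong\cO(-f_*h)$. Under Lemma~\ref{lemma_HH} these are the same permutation, so the square commutes.
2. For the right face, both routes to $\Aut(Q_{\cP_X})$ are given by the action on the weighted points. These agree by Lemma~\ref{lemma_epsilon}, since $\epsilon_X$ identifies the weighted points and $(f_\cP)_\P$ permutes $\tau$-orbits of simples compatibly (Lemma~\ref{lemma_new}).
3. The left face follows by restricting the middle one.

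I expect the main obstacle to be the compatibility of $\pi_X$ with $\pi_{\cP_X}$, which the paper states without proof. That is, one must check that $\cO(-h)$ and $\cO(-h')$, for a pair $h+h'=-K_X$, are simple sheaves supported at the weighted point $\epsilon_X(q)$, where $q$ is the degenerate quadric giving this pair. My first step would be to use $\tau\cO(-h)\cong\cO(-h')$ from the proof of Lemma~\ref{lemma_HH}: this already shows that the pair forms a single $\tau$-orbit, hence lies over one weighted point. Matching that point with $q$ would come from the naturality of $\psi_X$ under the $(\Z/2\Z)^4$ of sign changes, or from Kuznetsov's explicit description of the spinor modules at the degenerate fibre. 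If the argument is kept purely group-theoretic, this issue only affects the right face, and it is settled once the two bijections $H_X\to Q$ agree.
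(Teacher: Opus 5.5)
Your proposal is correct and is essentially the paper's proof. It uses the same maps ($f\mapsto f_\cP$, conjugation by $\epsilon_X$, transport along Lemma~\ref{lemma_HH}) and the same justifications of the faces (Lemma~\ref{lemma_epsilon} for the $\P$-face, $f_*\cO(-h)\cong\cO(-f_*h)$ for the $H$-face), with the left squares obtained by restricting to kernels. The obstacle you flag needs no computation with Clifford modules. If you identify $Q_X$ with $Q_{\cP_X}$ through the $\tau$-orbits $\{\cO(-h),\cO(-h')\}$, as your last paragraph suggests, the bottom maps are well defined and the lower face commutes trivially. The right face then commutes by a diagram chase from the other five faces, using surjectivity of $\Aut(X)\to\Aut(\P_X)$ (Lemma~\ref{lemma_diagonal}), which is how the paper concludes.
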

\begin{proof}
    We construct a homomorphism from the right square of~\eqref{eq_AutX} to the right square of~\eqref{eq_AutCohP}, then it will extend uniquely to a homomorphism of the left squares.

    The map from $\Aut(X)$ to $\Aut(\cP_X)$ is given by $f\mapsto f_\cP$. The map from $\Aut(\P_X)$ to $\Aut(\P_{\cP_X})$ comes from the natural identification $\epsilon_X\colon \P_X\xra{\sim}\P_{\cP_X}$ from Lemma~\ref{lemma_epsilon}. The maps  $\Aut(H_X,Q_X)\xra{\sim}\Aut(H_{\cP_X},Q_{\cP_X})$ and  $\Aut(Q_X)\xra{\sim}\Aut(Q_{\cP_X})$ come from the identifications 
    $H_X\cong H_{\cP_X}$ and $Q_X\cong Q_{\cP_X}$ which are compatible with $\pi_{X}$ and $\pi_{\cP_X}$.

    We get a cubic diagram, where
    the back face is the right square of~\eqref{eq_AutX} and the front face is the right square of~\eqref{eq_AutCohP}. Its  top face is commutative by Lemma~\ref{lemma_epsilon} and the left face is commutative by the definition of the identification $H_X\cong H_{\cP_X}$ from Lemma~\ref{lemma_HH}. Commutativity of the lower face is evident, and commutativity of the right face  follows automatically by diagram chase.
    $$\xymatrix{\Aut(X) \ar[rr] \ar[rd]\ar[dd]&& \Aut(\P_X) \ar[rd]\ar@{-}[d]&\\
    & \Aut(\cP_X)\ar[dd]\ar[rr] & \ar[d] & \Aut(\P_{\cP_X})\ar[dd] \\
    \Aut^{\ev}(H_X/Q_X)\ar@{-}[r]\ar[rd] & \ar[r] & \Aut(Q_X)\ar[rd] & \\
    & \Aut^{\ev}(H_{\cP_X}/Q_{\cP_X})\ar[rr] && \Aut(Q_{\cP_X})}$$
\end{proof}

We finish this section with the following observation, which will be crucial for the proof of our main result.
\begin{lemma}
    \label{lemma_main}
    For any del Pezzo surface $X$ of degree $4$, the natural group homomorphisms 
    $$\Aut(X)\to \Aut(\bA_X), \quad \Aut_\kk(X)\to \Aut_\kk(\bA_X)$$
    have left inverse homomorphisms.
\end{lemma}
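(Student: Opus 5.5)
The plan is to construct the left inverse as a composite
$$\Aut(\bA_X)\xra{\ \rho\ }\Aut(\cP_X)\xra{\ \beta\ }\Aut(H_X,Q_X)\times_{\Aut(Q_X)}\Aut(\P_X)\xra{\ \sigma\ }\Aut^{\ev}(H_X,Q_X)\times_{\Aut(Q_X)}\Aut(\P_X)\xra{\ \alpha^{-1}\ }\Aut(X),$$
and then check that precomposing with $\Aut(X)\to\Aut(\bA_X)$ gives the identity.

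\emph{Step 1: the map $\rho$.} Take an autoequivalence $\Phi$ of $\bA_X$. By Lemma~\ref{lemma_AutDbCohP}, applied through the equivalence $\bA_X\cong\Db(\coh\P_X)$ (type $(2,2,2,2,2)$ is wild), there is a unique $n\in\Z$ such that $\Phi[n]$ preserves the heart $\cP_X$. Define $\rho(\Phi)$ to be the restriction of $\Phi[n]$ to $\cP_X$. The decomposition $\Aut(\bA_X)\cong\Aut(\cP_X)\times\Z$ shows that $\rho$ is a group homomorphism: it is the projection onto the first factor. Note that $\cP_X$ is canonical, so this identification does not depend on any choice of $\psi_X$.

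\emph{Step 2: the map $\beta$.} Use the vertical maps of diagram~\eqref{eq_AutCohP} from Lemma~\ref{lemma_AutCohP3}. They give $\Aut(\cP_X)\to\Aut(H_{\cP_X},Q_{\cP_X})\times_{\Aut(Q_{\cP_X})}\Aut(\P_{\cP_X})$. Transport this along the identifications $H_{\cP_X}\cong H_X$ (Lemma~\ref{lemma_HH}), $Q_{\cP_X}\cong Q_X$, and $\epsilon_X\colon\P_X\cong\P_{\cP_X}$ (Lemma~\ref{lemma_epsilon}).

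\emph{Step 3: the map $\sigma$.} Apply the splitting of Lemma~\ref{lemma_BDsplit}, which is a homomorphism over $\Aut(Q_X)$, on the first factor and the identity on the second. Because it is over $\Aut(Q_X)$, it induces a homomorphism of fibre products. Then compose with $\alpha^{-1}$, using Lemma~\ref{lemma_AutX}(2).

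\emph{Step 4: verification.} Let $f\in\Aut(X)$. Its image $f_\bA$ preserves $\cP_X$, since it permutes the sheaves $\cO(-h)$; hence $n=0$ and $\rho(f_\bA)=f_\cP$. By Lemma~\ref{lemma_cube}, $\beta(f_\cP)$ is the image of $\alpha(f)$ under the inclusion $\Aut^{\ev}\subset\Aut$. The splitting $\sigma$ restricts to the identity on $\Aut^{\ev}(H_X,Q_X)$, so $\sigma\beta\rho(f_\bA)=\alpha(f)$, and applying $\alpha^{-1}$ returns $f$.

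\emph{The $\kk$-linear case.} The same construction restricts. $\kk$-linear autoequivalences go to $\kk$-linear autoequivalences of $\cP_X$, which induce $\kk$-linear automorphisms of $\P_X$ by Lemma~\ref{lemma_new}. By Remark~\ref{remark_AutXklinear}, $\alpha$ identifies $\Aut_\kk(X)$ with the fibre product taken with $\Aut_\kk(\P_X)$.

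\emph{Main obstacle.} The delicate point is the compatibility in Step 4: that the action of $f_\cP$ on simple objects and on $\P_{\cP_X}$ matches the geometric action of $f$ under the identifications. This is exactly the content of Lemmas~\ref{lemma_epsilon}, \ref{lemma_HH} and~\ref{lemma_cube}, which I would invoke directly. A lesser check is that $\rho$ is a homomorphism; this follows from uniqueness of the shift $n$ together with the fact that shifts commute with all exact autoequivalences.
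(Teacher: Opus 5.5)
Your proposal is correct and follows the paper's proof. The paper also reduces via $\Aut(\bA_X)\cong\Aut(\cP_X)\times\Z$ to the map $\Aut(X)\to\Aut(\cP_X)$, and then takes the left inverse to be $\alpha^{-1}$ composed with the splitting of Lemma~\ref{lemma_BDsplit}, the identifications, and $\beta$, read off the commutative square from Lemma~\ref{lemma_cube}. This is exactly your composite $\alpha^{-1}\sigma\beta\rho$. Your explicit checks, that $f_\bA$ needs no shift and that $\rho$ is a homomorphism, spell out what the paper leaves implicit. In the $\kk$-linear case, also cite the $\kk$-linearity of $\epsilon_X$ from Lemma~\ref{lemma_epsilon}: this is what makes the automorphism given by Lemma~\ref{lemma_new} land in $\Aut_\kk(\P_X)$.
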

\begin{proof}
    By Proposition~\ref{prop_BO} and Lemma~\ref{lemma_AutDbCohP}, one has 
    $$\Aut(\bA_X)\cong \Aut (\cP_X)\times \Z.$$
    The natural homomorphism $\Aut(X)\to \Aut(\bA_X)$ extends to  $\Aut(\cP_X)$, so it suffices to show that 
    the natural homomorphism 
    $\Aut(X)\to \Aut(\cP_X)$ has a left inverse.\\
    By Lemma~\ref{lemma_AutX}, there is an isomorphism 
    $$\alpha\colon \Aut(X) \xra{\sim}\Aut^{\ev}(H_X,Q_X)\times_{\Aut(Q_X)} \Aut(\P_X).$$ 
    By Lemma~\ref{lemma_AutCohP3}, there is a homomorphism 
    $$\beta\colon \Aut(\cP_X) \to \Aut(H_{\cP_X},Q_{\cP_X})\times_{\Aut(Q_{\cP_X})} \Aut(\P_{\cP_X}).$$
    By Lemma~\ref{lemma_cube}, $\alpha$ and $\beta$ fit into a commutative diagram 
    $$\xymatrix{\Aut(X)\ar[r]^-\alpha \ar[d] & \Aut^{\ev}(H_X,Q_X)\times_{\Aut(Q_X)} \Aut(\P_X)\ar[r]^-\gamma &
    \Aut(H_X,Q_X)\times_{\Aut(Q_X)} \Aut(\P_X)\ar[d]^{\sim}\\
    \Aut(\cP_X) \ar[rr]^-\beta && \Aut(H_{\cP_X},Q_{\cP_X})\times_{\Aut(Q_{\cP_X})} \Aut(\P_{\cP_X}).}$$
    Here, $\alpha$ and the homomorphism on the right are invertible, and $\gamma$ has a left inverse by Lemma~\ref{lemma_BDsplit}; therefore the homomorphism on the left also has a left inverse. 

    For $\kk$-linear automorphisms/autoequivalences, the proof is the same, see Lemma~\ref{lemma_AutCohP} and Remark~\ref{remark_AutXklinear}.
\end{proof}

\subsection{Separable functors}

Here we introduce some tools from category theory which we find useful for the formulation of our main result.
\begin{definition}[See~{\cite{Nastasescu-VdBergh-VOstaeyen}}]
\label{def_separable}
A functor $\Phi\colon \bC\to \bD$ is called \emph{separable} if the induced maps $\Phi_{X,Y}\colon \Hom_\bC(X,Y)\to \Hom_\bD(\Phi(X),\Phi(Y))$ have left inverse maps, natural in $X$ and $Y$. Explicitly, if there are given maps $\Psi_{X,Y}\colon \Hom_\bD(\Phi(X),\Phi(Y))\to \Hom_\bC(X,Y)$ for all $X,Y$ in $\bC$ such that
\begin{enumerate}[label=(s\arabic*)]
    \item  $\Psi_{X,Y}(\Phi_{X,Y}(f))=f$ for all $f\colon X\to Y$ in $\bC$,    \label{s1}
    \item for all  $a\colon X'\to X$, $b\colon Y\to Y'$ in $\bC$, the diagram commutes:
    $$\xymatrix{ \Hom_\bD(\Phi(X),\Phi(Y)) \ar[r]^-{\Psi_{X,Y}} \ar[d]^{\Phi(b)\circ - \circ\Phi(a)} & \Hom_\bC(X,Y) \ar[d]^{b\circ - \circ a} \\ \Hom_\bD(\Phi(X'),\Phi(Y')) \ar[r]^-{\Psi_{X',Y'}}  & \Hom_\bC(X',Y').}$$
       \label{s2}
\end{enumerate}
\end{definition}

\begin{definition}[See~{\cite{ArdizzoniMenini}}]
\label{def_hseparable}
A functor $\Phi\colon \bC\to \bD$ is called \emph{heavily separable} if it is separable and the maps $\Psi_{X,Y}\colon \Hom_\bD(\Phi(X),\Phi(Y))\to \Hom_\bC(X,Y)$ from Definition~\ref{def_separable} can be chosen such that 
\begin{enumerate}[label=(s\arabic*), start=3]
    \item  $\Psi_{X,Z}(vu)=\Psi_{Y,Z}(v)\circ \Psi_{X,Y}(u)$ for all $u\colon \Phi(X)\to \Phi(Y)$, $v\colon \Phi(Y)\to \Phi(Z)$  in $\bC$.
    \label{s3}
\end{enumerate}    
Note that condition \ref{s2} above follows from conditions \ref{s1} and \ref{s3}.
\end{definition}    

Recall that a category $\bC$ is a \emph{groupoid} if all morphisms in $\bC$ are invertible.
For functors between groupoids, we have the following
\begin{lemma}
\label{lemma_hseparable}
    Let $\Phi\colon \bC\to \bD$ be a functor between groupoids. Then the following conditions are equivalent:
    \begin{enumerate}[label=(\alph*)]
        \item $\Phi$ is heavily separable,
        \item $\Phi$ is injective on isomorphism classes, and for any $X\in\bC$ the group homomorphism $\Phi_{X,X}\colon \Aut_\bC(X)\to \Aut_{\bD}(\Phi(X))$ has a left inverse.
     \end{enumerate}
\end{lemma}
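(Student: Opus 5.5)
The plan is to prove both implications directly from the definitions, using that in a groupoid every morphism is invertible and every Hom-set is either empty or a torsor over an automorphism group.

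\emph{(a) $\Rightarrow$ (b).} Let $\Psi$ be a heavily separable structure. Taking $X=Y$, condition \ref{s1} says that $\Psi_{X,X}$ is a left inverse of $\Phi_{X,X}$ as a map of sets. Condition \ref{s3} says it is multiplicative. Hence $\Psi_{X,X}\colon \Aut_\bD(\Phi(X))\to\Aut_\bC(X)$ is a group homomorphism; it sends the identity to the identity because it is a left inverse and $\Phi(\id)=\id$.

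For injectivity on isomorphism classes, suppose $\Phi(X)\cong\Phi(Y)$ via some $u$. Then $\Psi_{X,Y}(u)$ is an element of $\Hom_\bC(X,Y)$, so this set is nonempty. Since $\bC$ is a groupoid, $X\cong Y$.

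\emph{(b) $\Rightarrow$ (a).} This is the substantive direction.

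First choose a representative $X_0$ in each isomorphism class of $\bC$. For every object $X$, fix an isomorphism $\theta_X\colon X_0\to X$, with $\theta_{X_0}=\id$. For each representative, fix a left inverse homomorphism $r_{X_0}\colon\Aut(\Phi(X_0))\to\Aut(X_0)$.

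Next define $\Psi$. Take $u\colon\Phi(X)\to\Phi(Y)$. If $X\not\cong Y$, no such $u$ exists: by injectivity on isomorphism classes $\Phi(X)\not\cong\Phi(Y)$, and $\bD$ is a groupoid, so every morphism is an isomorphism. So we may assume $X,Y$ lie in the same class with representative $X_0$. Set
$$\Psi_{X,Y}(u)=\theta_Y\circ r_{X_0}\bigl(\Phi(\theta_Y)^{-1}\circ u\circ\Phi(\theta_X)\bigr)\circ\theta_X^{-1}.$$

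Then check the two conditions.

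\ref{s1}: for $u=\Phi(f)$, the argument of $r_{X_0}$ equals $\Phi(\theta_Y^{-1}f\theta_X)$. Because $r_{X_0}$ is a left inverse, this gives back $\theta_Y^{-1}f\theta_X$, and conjugating returns $f$.

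\ref{s3}: for $u\colon\Phi(X)\to\Phi(Y)$ and $v\colon\Phi(Y)\to\Phi(Z)$, write out $\Psi_{Y,Z}(v)\circ\Psi_{X,Y}(u)$. The inner factors $\theta_Y^{-1}\circ\theta_Y$ cancel. Using that $r_{X_0}$ is a homomorphism, the two $r$-terms combine into $r_{X_0}$ of a single product. In that product the middle factors $\Phi(\theta_Y)\circ\Phi(\theta_Y)^{-1}$ cancel, leaving exactly the expression defining $\Psi_{X,Z}(vu)$.

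Condition \ref{s2} then follows from \ref{s1} and \ref{s3}, as noted after Definition~\ref{def_hseparable}.

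The only delicate point is the bookkeeping of conjugations and the use of the axiom of choice for the $\theta_X$; I expect no real obstacle.
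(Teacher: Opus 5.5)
Your proposal is correct and follows the paper's proof essentially verbatim. In (a)$\Rightarrow$(b) you use $\Psi_{X,Y}(u)$ to show $\Hom(X,Y)$ is non-empty and take $\Psi_{X,X}$ as the splitting, and in (b)$\Rightarrow$(a) you use the same conjugation formula $\Psi_{X,Y}(u)=\theta_Y\, r_{X_0}\bigl(\Phi(\theta_Y)^{-1}u\Phi(\theta_X)\bigr)\,\theta_X^{-1}$. Fixing a single isomorphism $\theta_X$ per object is a small tidying of the paper's argument: the paper picks $x,y$ ad hoc and tacitly uses the same $y$ in both factors when checking \ref{s3}, which Remark~\ref{remark_Psicanonical} justifies after the fact.
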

\begin{proof}
    (a) $\Longrightarrow$ (b). Let $\Phi$ be heavily separable. Assume that $\Phi(X)\cong \Phi(Y)$, then 
     $\Hom(X, Y)$ is non-empty by~\ref{s1}; hence, $X\cong Y$ by the definition of a groupoid. Therefore $\Phi$ is injective on isomorphism classes of objects. To split the homomorphism $\Phi_{X,X}$, one can take $\Psi_{X,X}$, which sends $1$ to $1$ by~\ref{s1} and respects products by~\ref{s3}.
     
    (b) $\Longrightarrow$ (a). We need to define $\Psi_{X,Y}\colon \Hom(\Phi(X),\Phi(Y))\to \Hom(X,Y)$ for all $X,Y\in\bC$. Fix an object in every isomorphism class in $\bC$. If $X\not\cong Y$, then $\Hom(\Phi(X),\Phi(Y))$ is empty by assumptions of (b) and there is nothing to do. If $X\cong Y$, let $X_0$ be the fixed object isomorphic to $X,Y$, let $\psi\colon \Aut (\Phi(X_0))\to \Aut(X_0)$ be a splitting of $\phi=\Phi_{X_0,X_0}$. Choose some isomorphisms $x\colon X_0\xra{\sim}X, y\colon X_0\xra{\sim}Y$, and define $\Psi_{X,Y}$ by identifying $\Hom(X,Y)$ with $\Aut(X_0)$ using $x,y$ and identifying $\Hom(\Phi(X),\Phi(Y))$ with $\Aut(\Phi(X_0))$ using $\Phi(x),\Phi(y)$:
    $$\Psi_{X,Y}(u)=y\psi(\Phi(y)^{-1}u\Phi(x))x^{-1}$$
    for all $u\in \Hom(\Phi(X), \Phi(Y))$. One checks property~\ref{s1}: for any $f\in\Hom(X,Y)$, one has
    \begin{multline*}
        \Psi_{X,Y}(\Phi_{X,Y}(f))=y\psi(\Phi(y)^{-1}\Phi(f)\Phi(x))x^{-1}=\\
    =y\psi(\Phi(y^{-1}fx))x^{-1}=y\psi(\phi(y^{-1}fx))x^{-1}=yy^{-1}fxx^{-1}=f.
    \end{multline*}
    One checks property~\ref{s3}: for $u\in\Hom(\Phi(X),\Phi(Y))$ and $v\in\Hom(\Phi(Y),\Phi(Z))$ one has (where $z\colon X_0\xra{\sim}Z$ is an isomorphism):
    \begin{multline*}
        \Psi_{Y,Z}(v)\Psi_{X,Y}(u)=z\psi(\Phi(z)^{-1}v\Phi(y))y^{-1}y\psi(\Phi(y)^{-1}u\Phi(x))x^{-1}=\\
        =z\psi(\Phi(z)^{-1}v\Phi(y))\psi(\Phi(y)^{-1}u\Phi(x))x^{-1}=
        z\psi(\Phi(z)^{-1}v\Phi(y)\Phi(y)^{-1}u\Phi(x))x^{-1}=\\
        =z\psi(\Phi(z)^{-1}vu\Phi(x))x^{-1}=\Psi_{X,Z}(vu).
    \end{multline*}
    Therefore, $\Phi$ is heavily separable, since~\ref{s2} follows automatically.
\end{proof}
\begin{remark}
\label{remark_Psicanonical}
    One can check that morphisms $\Psi_{X,Y}$, constructed in the proof of the (b) to (a) implication of Lemma~\ref{lemma_hseparable}, do not depend on the choice of isomorphisms $x,y$. Further, suppose that splitting homomorphisms $\psi_{X}\colon \Aut(\Phi(X))\to \Aut(X)$ from (b) are compatible in the following sense: for any isomorphism $a\colon X\xra{\sim}X'$, the diagram 
    $$\xymatrix{ \Aut(\Phi(X)) \ar[r]^-{\psi_{X}} \ar[d]^{\Phi(a)\circ - \circ\Phi(a)^{-1}} & \Aut(X) \ar[d]^{a\circ - \circ a^{-1}} \\ \Aut(\Phi(X')) \ar[r]^-{\psi_{X'}}  & \Aut(X')}$$
    commutes. Then the morphisms $\Psi_{X,Y}$, constructed in the proof, do not depend on the choice of objects $X_0$. 
\end{remark}

\begin{remark}
    For a functor $\Phi\colon \bC\to \bD$, where $\bC$ is a groupoid, the following  characterisation is easy to check. The functor $\Phi$ is heavily separable if and only if the induced functor $\bC\to \im \Phi$ has a left inverse functor, that is, a functor $\Psi\colon \im\Phi\to \bC$ such that $\Psi\Phi\cong \id_{\bC}$. Indeed, if such functor $\Psi$ exists, then the induced maps
    $$\Psi_{X,Y}\colon \Hom_\bD(\Phi(X),\Phi(Y))\xra{\Psi} \Hom_\bC(\Psi\Phi(X), \Psi\Phi(Y))\cong \Hom_\bC(X,Y)$$
    satisfy Definition~\ref{def_hseparable}. Conversely, suppose $\Phi$ is heavily separable and $\Psi_{X,Y}$ are as in Definition~\ref{def_hseparable}. One can choose a full subcategory $\bC_0\subset \bC$ such that $\Phi$ is a bijection on objects $\Ob\bC_0\xra{\sim}\Ob\im\Phi$. Then define $\Psi\colon \im\Phi\to \bC_0$ as the inverse to $\Phi$ on objects and given by  $\Psi_{X,Y}$ on morphisms for $X,Y\in \Ob\bC_0$. By construction, $\Psi\Phi=\id$ on $\bC_0$. It remains to note that $\bC_0$ is essential in $\bC$: for any $X\in \bC$, there is $X_0\in \bC$ such that $\Phi(X)=\Phi(X_0)$. It follows from~\ref{s1} that $\Hom(X, X_0)$ is non-empty, hence $X\cong X_0$ by the definition of a groupoid. It easily follows then that $\Psi\Phi$ is isomorphic to $\id$ on the whole of $\bC$.
\end{remark}

\section{Main results}
\label{section_main}

In this section we prove our main results. 

\subsection{Algebraically closed field case}
In this subsection, we assume that the base field $\kk$ is algebraically closed and of characteristic $\ne 2$. We claim that for any del Pezzo surfaces $X,Y$ over~$\kk$ of degree $4$ and any equivalence $\bA_X\xra{\sim}\bA_Y$, there is a canonically defined isomorphism $X\xra{\sim}Y$. Let $(\dP4, \Iso)$ and $(\dP4, \Iso_\kk)$ denote the categories where objects are del Pezzo   surfaces of degree $4$ over $\kk$ and the morphisms are isomorphisms  and $\kk$-linear isomorphisms of surfaces respectively. 
For essentially small $\kk$-linear triangulated categories $\bC,\bD$, denote by $\Iso(\bC,\bD)$ and 
$\Iso_\kk(\bC,\bD)$ the sets of isomorphism classes of exact equivalences  (resp. $\kk$-linear exact equivalences) $\bC\to\bD$.
Let $(\TrCat,\Iso)$ and $(\TrCat,\Iso_\kk)$ be the categories where objects are essentially small triangulated $\kk$-linear categories and the morphisms from $\bC$ to $\bD$ are given by 
$\Iso(\bC,\bD)$ and 
$\Iso_\kk(\bC,\bD)$ respectively.
\begin{theorem}
\label{th_main}
    The natural functors 
    $$\Phi \colon (\dP4, \Iso)\to (\TrCat, \Iso)\quad\text{and} \quad  (\dP4, \Iso_\kk)\to (\TrCat, \Iso_\kk)$$
    taking $X$ to $\bA_X$ and $f$ to $f_\bA$
    are heavily separable. Explicitly, for any surfaces $X,Y$ in $\dP4$ and equivalence $g\colon \bA_X\xra{\sim}\bA_Y$, there is a well-defined isomorphism $\Psi_{X,Y}(g)\colon X\xra{\sim}Y$ such that $\Psi_{X,Y}(\Phi_{X,Y}(f))=f$ for all $f\in \Iso(X,Y)$, and $\Psi_{Y,Z}(h)\Psi_{X,Y}(g)=\Psi_{X,Z}(hg)$ for all $g\in\Iso(\bA_X,\bA_Y)$, $h\in\Iso(\bA_Y,\bA_Z)$. If $g$ is $\kk$-linear, then $\Psi_{X,Y}(g)$ is also $\kk$-linear.
\end{theorem}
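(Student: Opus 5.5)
We apply Lemma~\ref{lemma_hseparable}. Both source and target are groupoids: morphisms in $(\dP4,\Iso)$ are isomorphisms, and isomorphism classes of exact equivalences are invertible. So it suffices to verify condition (b) of that lemma. Namely, we need that $\Phi$ is injective on isomorphism classes of objects, and that for every $X$ the homomorphism $\Phi_{X,X}\colon\Aut(X)\to\Aut(\bA_X)$, $f\mapsto f_\bA$, has a left inverse. The second condition is exactly Lemma~\ref{lemma_main}, both in the general and in the $\kk$-linear version. Lemma~\ref{lemma_hseparable} then yields the maps $\Psi_{X,Y}$ satisfying \ref{s1} and \ref{s3}, which is the explicit reformulation in the statement.

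For injectivity on isomorphism classes, let $g\colon\bA_X\xra{\sim}\bA_Y$ be an equivalence. By Proposition~\ref{prop_BO} both categories are equivalent to $\Db(\coh\P)$ for wild weighted projective lines of type $(2,2,2,2,2)$. By Lemma~\ref{lemma_AutDbCohP} there is $n\in\Z$ such that $g[n]$ sends the heart $\cP_X$ onto the standard heart of some equivalence $\Db(\modd\cB_Y)\xra{\sim}\bA_Y$. By uniqueness of $\cP_Y$ up to shift, this heart is a shift $\cP_Y[m]$, so after adjusting $n$ we get an equivalence $g_\cP\colon\cP_X\xra{\sim}\cP_Y$. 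Lemma~\ref{lemma_new} provides an isomorphism $(g_\cP)_\P\colon\P_{\cP_X}\xra{\sim}\P_{\cP_Y}$, which is $\kk$-linear if $g$ is. Composing with the canonical isomorphisms $\epsilon_X,\epsilon_Y$ of Lemma~\ref{lemma_epsilon} gives $\P_X\xra{\sim}\P_Y$. Lemma~\ref{lemma_diagonal} then produces an isomorphism $X\xra{\sim}Y$, which is $\kk$-linear when $g$ is.

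The $\kk$-linear assertion requires a little care. For the $\kk$-linear functor we run the same argument inside the $\kk$-linear groupoids, using the $\kk$-linear parts of Lemmas~\ref{lemma_main}, \ref{lemma_new} and~\ref{lemma_diagonal}. To conclude that $\Psi_{X,Y}(g)$ is $\kk$-linear for $\kk$-linear $g$ when working in the non-linear groupoid, we check compatibility of the two splittings. The left inverse of Lemma~\ref{lemma_main} is the composite of $\Aut(\bA_X)\to\Aut(\cP_X)$, $\beta$, the projection of Lemma~\ref{lemma_BDsplit}, and $\alpha^{-1}$. This composite sends $\kk$-linear autoequivalences to $\kk$-linear automorphisms, because $\kk$-linear $g$ induces a $\kk$-linear automorphism of $\P_X$ (Lemma~\ref{lemma_new}), and $\alpha$ restricts to the $\kk$-linear isomorphism of Remark~\ref{remark_AutXklinear}. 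With the choices $x,y$ in the proof of Lemma~\ref{lemma_hseparable} taken $\kk$-linear, which is possible once a $\kk$-linear isomorphism exists, $\Psi_{X,Y}(g)$ is then $\kk$-linear.

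The main obstacle is well-definedness, i.e.\ independence of the choices of representatives $X_0$ and isomorphisms $x,y$, which is Remark~\ref{remark_Psicanonical}. This needs the splittings $\psi_X$ to be natural under conjugation by isomorphisms $a\colon X\xra{\sim}X'$. I would deduce this from the naturality of every ingredient. The identifications $\epsilon_X$ are natural by Lemma~\ref{lemma_epsilon}, and $H_X\cong H_{\cP_X}$ is natural since $f_\bA$ sends $\cO(-h)$ to $\cO(-f_*h)$. The isomorphism $\alpha$ is natural, and the splitting of Lemma~\ref{lemma_BDsplit} is defined intrinsically via the central element $c$, so it commutes with conjugation. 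The decomposition $\Aut(\bA_X)\cong\Aut(\cP_X)\times\Z$ is canonical because $\cP_X$ is canonically defined. Checking that the cube of Lemma~\ref{lemma_cube} is natural in $X$ is the routine but most delicate verification.
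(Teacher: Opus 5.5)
Your proposal is correct and follows the paper's proof: Lemma~\ref{lemma_hseparable} reduces everything to injectivity on isomorphism classes (via Lemmas~\ref{lemma_AutDbCohP}, \ref{lemma_new}, \ref{lemma_epsilon}, \ref{lemma_diagonal}) and the splitting of Lemma~\ref{lemma_main}; your checks of $\kk$-linearity and of naturality of the splittings go beyond what the paper writes out, and both are correct. One small fix: $X_0$ represents a class in the non-linear groupoid, so a $\kk$-linear $x\colon X_0\to X$ need not exist. Instead, use that $\Psi_{X,Y}$ does not depend on $x,y$ (the first part of Remark~\ref{remark_Psicanonical}, which needs no extra hypothesis) and take $y=t\circ x$ with $t\colon X\to Y$ $\kk$-linear; this works because conjugating a $\kk$-linear map by a semilinear one keeps it $\kk$-linear.
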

\begin{proof}
    We use Lemma~\ref{lemma_hseparable}. First, we observe that $\Phi$ is injective on isomorphism classes. Indeed, if $\bA_X\cong \bA_Y$, then $\cP_X\cong \cP_Y$ by Lemma~\ref{lemma_AutDbCohP}, hence $\P_X\cong \P_Y$ by Lemma~\ref{lemma_new}, and $X\cong Y$ by Lemma~\ref{lemma_diagonal}. Second, the group homomorphisms $\Aut(X)\to \Aut(\bA_X)$ have left inverse homomorphisms by Lemma~\ref{lemma_main}, and we conclude the proof using Lemma~\ref{lemma_hseparable}. For $\kk$-linear isomorphisms/equivalences the proof is the same.
\end{proof}

The same result holds in the $G$-equivariant setting. We fix a group $G$ and consider $G$-actions on del Pezzo surfaces and on triangulated categories. For the definitions of group actions on categories and equivariant functors, we refer to~\cite[Section 3.4]{ElaginSchneiderShinder}. 
Let $(\GdP4, \GIso)$ denote the category where objects are $G$-del Pezzo surfaces of degree $4$ over $\kk$, and the morphisms are $G$-equivariant isomorphisms of surfaces. We do not assume that the action is $\kk$-linear. Let $(\GTrCat,\GIso)$ be the category where objects are essentially small triangulated $\kk$-linear categories equipped with a $G$-action, and the morphisms are $G$-equivariant exact equivalences of categories up to  $G$-isomorphisms. We also consider non-full subcategories $(\GdP4, \GIso_\kk)\subset (\GdP4, \GIso)$ and $(\GTrCat,\GIso_\kk)\subset (\GTrCat,\GIso)$, where objects are the same, and morphisms are given by $\kk$-linear isomorphisms/equivalences.  
There are natural  functors
    $$(\GdP4, \GIso)\to (\GTrCat, \GIso), \qquad  (\GdP4, \GIso_\kk)\to (\GTrCat, \GIso_\kk).$$
Indeed, let $X$ be a $G$-del Pezzo surface of degree $4$, where the action is given by automorphisms $\rho_g, g\in G$, of $X$. There is  a $G$-action on $\Db(X)$ given by direct image functors; it  restricts to a $G$-action on $\bA_X$: an element $g$ acts by the autoequivalence $\Phi(\rho_g)=(\rho_g)_{\bA}$. Further, let $X,Y$ be $G$-del Pezzo surfaces of degree $4$, and $f\colon X\xra{\sim}Y$ be a $G$-equivariant isomorphism. Then the functor $f_*\colon \Db(X)\xra{\sim}\Db(Y)$ naturally becomes a $G$-equivariant functor and restricts to a $G$-equivariant functor $\Phi(f)=f_\bA\colon \bA_X\xra{\sim}\bA_Y$.
\begin{theorem}
\label{th_mainG}
    The  natural functors 
    $$\Phi\colon (\GdP4, \GIso)\to (\GTrCat, \GIso)\quad\text{and} \quad (\GdP4, \GIso_\kk)\to (\GTrCat, \GIso_\kk)$$
    taking $(X,(\rho_g))$ to $(\bA_X,((\rho_g)_{\bA}))$ and $f$ to $f_\bA$
    are heavily separable. In particular, if, for $G$-del Pezzo surfaces $X$ and $Y$ of degree $4$, the categories $\bA_X$ and $\bA_Y$ are $G$-equivariantly equivalent (resp. $\kk$-linearly $G$-equivariantly equivalent), then $X$ and $Y$ are $G$-equivariantly isomorphic (resp. $G$-equivariantly isomorphic over $\kk$).
\end{theorem}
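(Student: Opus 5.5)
We will again apply Lemma~\ref{lemma_hseparable}, since both $(\GdP4,\GIso)$ and $(\GTrCat,\GIso)$ are groupoids. We then need two things: injectivity of $\Phi$ on isomorphism classes, and a left inverse for each homomorphism $\Aut_{G}(X)\to\Aut_G(\bA_X)$. Here $\Aut_G(X)$ is the centraliser of $\rho(G)$ in $\Aut(X)$, and $\Aut_G(\bA_X)$ is the group of $G$-equivariant autoequivalences up to $G$-isomorphism.

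The key observation is that the splitting in Lemma~\ref{lemma_main} is natural, so it can be made equivariant. Let $\sigma_X\colon \Aut(\bA_X)\to\Aut(X)$ be the left inverse constructed there. For an isomorphism $f\colon X\to Y$ it satisfies $\sigma_Y(f_\bA\, u\, f_\bA^{-1})=f\,\sigma_X(u)\,f^{-1}$, because every ingredient is natural in $X$: the identifications $\epsilon_X$, $H_X\cong H_{\cP_X}$, $Q_X\cong Q_{\cP_X}$, the map $\beta$, and the projection of $B_5$ onto $D_5$ killing the central element $c$. More generally, I would define directly, for any equivalence $g\colon\bA_X\to\bA_Y$, an isomorphism $\Psi(g)\colon X\to Y$. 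To do this, shift $g$ so that it maps $\cP_X$ to $\cP_Y$ (Lemma~\ref{lemma_AutDbCohP}). This gives an isomorphism $\P_X\to\P_Y$ (Lemmas~\ref{lemma_new},~\ref{lemma_epsilon}) and a compatible bijection $H_X\to H_Y$ over $Q_X\to Q_Y$. If this bijection is odd relative to the one induced by some isomorphism of surfaces, compose it with $c$. Lemma~\ref{lemma_AutX} and Lemma~\ref{lemma_diagonal} then give a unique isomorphism $X\to Y$ realising the pair, and uniqueness yields functoriality $\Psi(hg)=\Psi(h)\Psi(g)$ and $\Psi(f_\bA)=f$. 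So I would show that the functor $\Psi$ from Theorem~\ref{th_main} (compare Remark~\ref{remark_Psicanonical}) is well defined on all morphisms, not just on automorphisms.

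With such a functorial $\Psi$ in hand, the equivariant statement is formal. Let $(g,\theta)\colon\bA_X\to\bA_Y$ be a $G$-equivariant equivalence, where $\theta_\gamma\colon g\circ(\rho^X_\gamma)_\bA\cong(\rho^Y_\gamma)_\bA\circ g$. Since $\Psi$ only depends on isomorphism classes of functors, we get $\Psi(g)\rho^X_\gamma=\Psi(g)\Psi((\rho^X_\gamma)_\bA)=\Psi(g(\rho^X_\gamma)_\bA)=\Psi((\rho^Y_\gamma)_\bA g)=\rho^Y_\gamma\Psi(g)$, so $\Psi(g)$ is $G$-equivariant. Also, $G$-isomorphic equivariant functors have isomorphic underlying functors, so $\Psi$ descends to $\GIso$. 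Properties (s1) and (s3) are inherited from the non-equivariant case, because the forgetful maps are compatible with composition. This gives heavy separability directly, and injectivity on isomorphism classes follows. For the $\kk$-linear version, note that if $g$ is $\kk$-linear then so is $\Psi(g)$, by Theorem~\ref{th_main}.

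The main obstacle is the second paragraph: proving that the choice of the even lift is canonical for morphisms between different surfaces and compatible with composition. The parity of a bijection $H_X\to H_Y$ is only defined relative to a reference isomorphism. I would handle this by taking the reference to be any surface isomorphism $f\colon X\to Y$ realising the induced map $\P_X\to\P_Y$, which exists by Lemma~\ref{lemma_diagonal}. Its independence of the choice follows from the fact that all automorphisms of $X$ induce even permutations of $H_X$ (Lemma~\ref{lemma_AutX}), so the parity class is well defined. Compatibility with composition then follows because $c$ is central and parity is multiplicative.
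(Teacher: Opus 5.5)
Your proof is correct, and its decisive step is exactly the paper's argument: in your third paragraph you apply (s1) and (s3) to the equal classes $g\circ(\rho^X_\gamma)_\bA\cong(\rho^Y_\gamma)_\bA\circ g$ to get $\Psi(g)\rho^X_\gamma=\rho^Y_\gamma\Psi(g)$, and then (s1) and (s3) are inherited through the forgetful maps. The one difference is that your ``main obstacle'' is already resolved by Theorem~\ref{th_main}, which asserts heavy separability, i.e.\ a family $\Psi_{X,Y}$ defined on all equivalences between possibly different surfaces and satisfying (s1) and (s3); this family is built in Lemma~\ref{lemma_hseparable} from fixed representatives, so your canonical parity-corrected construction of $\Psi$, though valid and in the spirit of Remark~\ref{remark_Psicanonical}, is not needed.
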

\begin{proof}
    Everything follows from Theorem~\ref{th_main}.
    We will prove that the first functor is heavily separable; for the second one, the arguments are the same. Let $X,Y$ be $G$-del Pezzo surfaces of degree $4$. Let $u\colon \bA_X\xra{\sim}\bA_Y$ be a $G$-equivariant equivalence. Let $f=\Psi_{X,Y}(u)$ be the isomorphism $X\xra{\sim}Y$ provided by Theorem~\ref{th_main}. We have to prove that $f$ is $G$-equivariant, that is, for all $g\in G$, we have 
    $\rho_g^Yf=f\rho_g^X$. Indeed, we have $(\rho_g^Y)_\bA \circ u\cong u\circ (\rho_g^X)_\bA$ since $u$ is $G$-equivariant. Applying $\Psi_{X,Y}$ from Theorem~\ref{th_main}, we get the desired equality: 
    $$\Psi_{X,Y}((\rho_g^Y)_\bA \circ u)=\Psi_{Y,Y}((\rho_g^Y)_\bA)\Psi_{X,Y}(u)=\Psi_{Y,Y}(\Phi_{Y,Y}(\rho_g^Y))f=\rho_g^Y f,$$
    and similarly, $\Psi_{X,Y}(u\circ (\rho_g^X)_\bA)=f\rho_g^X$. Therefore, the maps $\Psi_{X,Y}$ from Theorem~\ref{th_main} take $G$-equivariant equivalences to $G$-equivariant isomorphisms, and satisfy the properties from Definition~\ref{def_hseparable} again by Theorem~\ref{th_main}.
\end{proof}

\subsection{Arbitrary perfect field case}

Now we assume that $\kk$ is any perfect field and $\bar\kk$ is its algebraic closure. The field extension $\kk\subset \bar\kk$ is a (possibly infinite) Galois extension; let $G$ be its Galois group. We suppose that $\har \kk\ne 2$.
For a del Pezzo surface $X$ over $\kk$, we denote by $\oX$ the scalar extension $X\times_{\kk}\Spec\bar\kk$, which is a del Pezzo surface over $\bar\kk$. As for the algebraically closed case, the orthogonal subcategory $\bA_X=\cO_X^\perp \subset \Db(X)$ is defined, and, as before, we prove that $X$ is uniquely determined by this subcategory. 

Let us say that an equivalence $\bA_X\xra{\sim}\bA_Y$ is \emph{of Fourier--Mukai type} if
the composition
$$\Db(X)\to \bA_X\xra{\Phi}\bA_Y\to \Db(Y)$$
is given by a Fourier--Mukai kernel $\cE\in\Db(X\times Y)$ (where the first and the last arrows are the projection and the inclusion functors, respectively). In the statement below, we restrict ourselves to equivalences of Fourier--Mukai type.

\begin{theorem}
\label{theorem_main-perfect}
    If $X,Y$ are del Pezzo surfaces of degree $4$ over a perfect field $\kk$ with $\har\kk\ne 2$, and there is a $\kk$-linear equivalence $\bA_X\xra{\sim}\bA_Y$ of Fourier--Mukai type, then $X$ is $\kk$-linearly isomorphic to $Y$. 
\end{theorem}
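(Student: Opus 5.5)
Starting from the $\kk$-linear Fourier--Mukai equivalence $\phi\colon\bA_X\xra{\sim}\bA_Y$, we base change it to $\bar\kk$ and obtain a $G$-equivariant equivalence $\bar\phi\colon\bA_{\oX}\xra{\sim}\bA_{\oY}$. Theorem~\ref{th_mainG} then turns $\bar\phi$ into a $G$-equivariant isomorphism $\oX\xra{\sim}\oY$, and Galois descent gives the desired $\kk$-isomorphism. Here $G=\Gal(\bar\kk/\kk)$ acts on $\oX=X\times_\kk\Spec\bar\kk$ and $\oY$ through the second factor. These actions are semilinear, not $\bar\kk$-linear, which is exactly why Theorem~\ref{th_mainG} was stated without linearity assumptions. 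The induced $G$-actions on $\Db(\oX)$ and $\Db(\oY)$ preserve $\cO_{\oX}$ and $\cO_{\oY}$, so they restrict to $G$-actions on $\bA_{\oX}$ and $\bA_{\oY}$.

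First I check that the base change of $\bA_X$ is $\bA_{\oX}$, compatibly with the semi-orthogonal decompositions. If $\cE\in\Db(X\times Y)$ is the kernel of the composite $\Db(X)\to\bA_X\to\bA_Y\to\Db(Y)$, its pullback $\bar\cE$ to $\oX\times_{\bar\kk}\oY$ is a kernel defining a $\bar\kk$-linear functor $\Db(\oX)\to\Db(\oY)$. By flat base change, this functor kills $\cO_{\oX}$ and lands in $\bA_{\oY}$. The same applies to a kernel of the inverse equivalence, which I would construct as the composite of the inclusion and the adjoint functors; it is again of Fourier--Mukai type. The compositions of the two functors are isomorphic to the projection functors onto $\bA_X$ and $\bA_Y$, both of which have explicit kernels (cones of $\cO\boxtimes\cO^\vee\to\cO_\Delta$). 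I therefore expect these isomorphisms to hold at the level of kernels, hence after base change. The conclusion is that $\bar\phi$ is an equivalence $\bA_{\oX}\xra{\sim}\bA_{\oY}$.

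Second, $\bar\phi$ is $G$-equivariant. The kernel $\bar\cE$ is pulled back from $X\times Y$, so it carries a canonical isomorphism $(\sigma\times\sigma)^*\bar\cE\cong\bar\cE$ for every $\sigma\in G$. This gives isomorphisms $\sigma_*\circ\bar\phi\cong\bar\phi\circ\sigma_*$, satisfying the cocycle conditions. For the statement of Theorem~\ref{th_mainG} we only need $\bar\phi$ to be a morphism in $(\GTrCat,\GIso_{\bar\kk})$, and these compatibilities come from the descent datum on $\bar\cE$.

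Third, Theorem~\ref{th_mainG}, applied to the category of $G$-surfaces with $\bar\kk$-linear $G$-isomorphisms, gives a $\bar\kk$-linear isomorphism $f=\Psi(\bar\phi)\colon\oX\xra{\sim}\oY$ that commutes with the Galois actions. By Galois descent for morphisms of quasi-projective varieties, such an $f$ descends to a $\kk$-isomorphism $X\xra{\sim}Y$. We can apply this descent directly because the extension is possibly infinite, but $f$ is defined over some finite subextension and is $G$-invariant. The main obstacle is the first step: proving that $\bar\phi$ is still an equivalence onto $\bA_{\oY}$. This is where the Fourier--Mukai hypothesis is essential. I would handle it by working throughout with kernels and their convolutions, and using that base change commutes with convolution.
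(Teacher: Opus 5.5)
Your proposal is correct and follows the paper's proof step for step: you base-change the kernel $\cE$ to get a $\bar\kk$-linear $G$-equivariant equivalence $\bA_{\oX}\xra{\sim}\bA_{\oY}$, apply Theorem~\ref{th_mainG}, and descend the resulting $G$-equivariant isomorphism $\oX\xra{\sim}\oY$ to $\kk$. The paper settles your ``main obstacle'' by citing \cite[Section 6]{Kuznetsov_BaseChange}; if you argue it by hand, note that an isomorphism of Fourier--Mukai functors does not automatically lift to an isomorphism of kernels, so either lift the adjunction morphisms to kernel level or, more simply, check full faithfulness and essential surjectivity of the base-changed functor on base changes of objects of $\bA_X$ and $\bA_Y$, which classically generate $\bA_{\oX}$ and $\bA_{\oY}$.
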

\begin{remark}
We believe that an analogue of Theorem~\ref{th_main} holds for any perfect field: the natural functor  $(\dP4, \Iso_\kk)\to (\TrCat, \Iso_\kk)$ is heavily separable; that is,   for any $\kk$-linear equivalence $\bA_X\xra{\sim}\bA_Y$ there is a well-defined isomorphism $X\xra{\sim}Y$ over $\kk$. Its proof consists of three steps: (1) given an equivalence $u\colon \bA_X\xra{\sim}\bA_Y$, construct a $G$-equivariant equivalence $\bA_{\oX}\xra{\sim}\bA_{\oY}$, (2)
construct a $G$-equivariant isomorphism  $\oX\to\oY$, (3) obtain an isomorphism $X\xra{\sim}Y$. Step (2) is by Theorem~\ref{th_mainG}, while step (3) is standard Galois descent.
However, doing step (1)
would require knowing that $u$ is given by a unique Fourier--Mukai kernel or lifts properly to a dg-enhancement. We prefer not to overload the paper with the relevant details and to prove a weaker statement as in Theorem~\ref{theorem_main-perfect}.
\end{remark}
\begin{proof}[Proof of Theorem~\ref{theorem_main-perfect}]
    We deduce the statement by passing to the algebraic closure and using Theorem~\ref{th_mainG}. Note that $\oX$  and $\oY$ carry natural $G$-actions and may be viewed as $G$-surfaces. We refer to~\cite[Section 3.5]{ElaginSchneiderShinder} for the Galois descent for subcategories in derived categories of coherent sheaves. 
    
    First, we claim that the orthogonal subcategories $\bA_{\oX}$ and $\bA_{\oY}$ for $\oX$ and $\oY$ are $\bar\kk$-linearly $G$-equivariantly equivalent. To explain this, assume that a $\kk$-linear equivalence $\bA_X\to\bA_Y$ is of Fourier--Mukai type and is given by a kernel $\cE$. Consider the scalar extension $\bar{\cE}\in\Db(\oX\times \oY)$ of~$\cE$. By~\cite[Section 6]{Kuznetsov_BaseChange}, the kernel $\bar{\cE}$ produces an equivalence $\bA_{\oX}\xra{\sim}\bA_{\oY}$, 
    which is clearly $\bar\kk$-linear and  $G$-equivariant.
    We use now Theorem~\ref{th_mainG} to obtain a  $\bar\kk$-linear $G$-equivariant isomorphism $f\colon \oX\xra{\sim}\oY$. By Galois descent, $f$ produces a $\kk$-linear isomorphism $X\xra{\sim}Y$.
\end{proof}

\subsection{An application to birationality}

Using \emph{atomic theory} introduced in~\cite{ElaginSchneiderShinder} and the main result of this paper, we explain that two minimal del Pezzo surfaces of degree $4$ are birational if and only if they are isomorphic. There are two versions of the statement: for minimal $G$-del Pezzo surfaces over an algebraically closed field, and for  minimal del Pezzo surfaces over any perfect  field, the latter being a special case of the former.

For an abelian group $M$ with an action of a group $G$, we denote by $M^G\subset M$ the subgroup of invariants. 
A $G$-del Pezzo surface $X$ is said to be \emph{$G$-minimal} if $\rank(\Pic(X)^G)=1$.

\begin{theorem}\label{th_birationalG}
Let $\kk$ be an algebraically closed field of characteristic $\ne 2$, and let $X,Y$ be del Pezzo surfaces of degree $4$ over $\kk$. Suppose a group $G$ acts on $X,Y$ (not necessarily $\kk$-linearly) so that $X,Y$ are $G$-minimal.
Then the following conditions are equivalent:
\begin{enumerate}[label=(\alph*)]
    \item\label{it:dP4--iso} surfaces $X$ and $Y$ are $G$-isomorphic;
    \item\label{it:dP4--bir} surfaces $X$ and $Y$ are $G$-birational;
    \item\label{it:dP4--atoms} categories $\bA_X$ and $\bA_Y$ are $G$-equivariantly equivalent.
\end{enumerate}
\end{theorem}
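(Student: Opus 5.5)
The plan is to prove the cycle of implications (a) $\Rightarrow$ (b) $\Rightarrow$ (c) $\Rightarrow$ (a).

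\emph{(a) $\Rightarrow$ (b).} This is trivial: a $G$-isomorphism is in particular a $G$-birational map.

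\emph{(c) $\Rightarrow$ (a).} This is exactly Theorem~\ref{th_mainG}. A $G$-equivariant equivalence $u\colon \bA_X\xra{\sim}\bA_Y$ is a morphism in $(\GTrCat,\GIso)$ between the images of $X$ and $Y$. Heavy separability of $\Phi$ provides $\Psi_{X,Y}(u)$, which is a $G$-equivariant isomorphism $X\xra{\sim}Y$. Here $G$-minimality is not used at all.

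\emph{(b) $\Rightarrow$ (c).} This is the substantive step, and it relies on the atomic theory of~\cite{ElaginSchneiderShinder}. First, I would apply the $G$-equivariant Sarkisov program to a $G$-birational map $X\dashrightarrow Y$ between $G$-minimal del Pezzo surfaces. This factors the map into elementary $G$-links between $G$-Mori fibre spaces. Second, I would invoke the result of~\cite{ElaginSchneiderShinder} that for a $G$-minimal quartic del Pezzo surface, $\bA_X$ (with its induced $G$-action) is a $G$-birational invariant atom. Concretely, the $G$-equivariant derived category of $X$ has a $G$-atomic decomposition whose atoms are $\langle\cO_X\rangle$ and $\bA_X$, and $\bA_X$ is the unique non-trivial atom in the sense that it is not equivalent to the derived category of an étale algebra twisted by a Brauer class. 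By the main theorem there, such atoms are preserved under $G$-birational maps. More precisely, any $G$-surface $G$-birational to $X$ contains $\bA_X$ $G$-equivariantly as an admissible subcategory, and its non-trivial atoms are matched with those of $X$. Applying this to $Y$, the non-trivial atom of $Y$, which is $\bA_Y$, must be $G$-equivalent to the non-trivial atom $\bA_X$ of $X$.

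I expect the main obstacle to be the last point: checking the hypotheses of the atomic theorem. One must verify that $\bA_X$ is genuinely non-trivial, i.e.\ not one of the "point-like" atoms that can be created or destroyed by Sarkisov links (blow-ups of $G$-orbits). For this I would compare the rank of the $G$-invariant numerical Grothendieck group of $\bA_X$, or equivalently its structure as $\Db(\coh\P_X)$ with the induced $G$-action, against those point-like atoms. The relevant fact is that $\bA_X$ has rank $5$ over $\bar\kk$ and is not equivalent to the derived category of any $0$-dimensional object, since $\coh\P_X$ is wild. Granting the statement from~\cite{ElaginSchneiderShinder}, as in~\cite[Th.~5.5]{ElaginSchneiderShinder}, the implication follows directly.
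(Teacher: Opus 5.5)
Your proposal follows the paper's proof exactly: (a)$\Rightarrow$(b) is trivial, (c)$\Rightarrow$(a) is Theorem~\ref{th_mainG}, and (b)$\Rightarrow$(c) is deferred to the atomic theory of~\cite{ElaginSchneiderShinder}. The paper does this by citing Prop.~5.1 there together with the definition of the atoms of a $G$-minimal quartic del Pezzo surface (Def.~4.21), so no separate non-triviality check or Sarkisov-link analysis is needed; one small slip in your side remark is that $K_0(\bA_X)$ has rank $7$ (as $K_0(X)$ has rank $8$), not $5$, though this plays no role in the argument.
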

\begin{proof}
Implication \ref{it:dP4--iso} $\implies$ \ref{it:dP4--bir} is trivial. Implication \ref{it:dP4--bir} $\implies$ \ref{it:dP4--atoms} follows from~\cite[Prop. 5.1]{ElaginSchneiderShinder} and Definition~\cite[Def. 4.21]{ElaginSchneiderShinder} of atoms for a $G$-minimal del Pezzo surface of degree $4$. Implication \ref{it:dP4--atoms} $\implies$ \ref{it:dP4--iso} is by Theorem~\ref{th_mainG}.
\end{proof}

\begin{theorem}\label{th_birational-perfect}
Let $\kk$ be a perfect field of characteristic $\ne 2$, and let $X,Y$ be minimal del Pezzo surfaces of degree $4$ over $\kk$. 
Then the following conditions are equivalent:
\begin{enumerate}[label=(\alph*)]
    \item surfaces $X$ and $Y$ are  isomorphic over $\kk$;
    \item surfaces $X$ and $Y$ are  birational over $\kk$;
    \item  categories $\bA_X$ and $\bA_Y$ are $\kk$-linearly equivalent by an equivalence of Fourier--Mukai type.
\end{enumerate}
\end{theorem}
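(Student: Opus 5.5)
The plan is to prove the cycle (a) $\Rightarrow$ (b) $\Rightarrow$ (c) $\Rightarrow$ (a), mirroring the proof of Theorem~\ref{th_birationalG} but working over $\kk$ instead of equivariantly over $\bar\kk$.

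The implication (a) $\Rightarrow$ (b) is trivial.

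For (c) $\Rightarrow$ (a) I would invoke Theorem~\ref{theorem_main-perfect} directly. It requires only that $X,Y$ be del Pezzo surfaces of degree $4$ over a perfect field of characteristic $\ne 2$ and that there be a $\kk$-linear equivalence $\bA_X\xra{\sim}\bA_Y$ of Fourier--Mukai type. Minimality is not even needed for this step.

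For (b) $\Rightarrow$ (c) I would use atomic theory from~\cite{ElaginSchneiderShinder}. Since $X$ is minimal, $\rank\Pic(X)=1$, and by~\cite[Def.~4.21]{ElaginSchneiderShinder} the atom of $X$ is the subcategory $\bA_X=\cO_X^\perp$. By~\cite[Prop.~5.1]{ElaginSchneiderShinder}, or its arithmetic form for perfect fields (cf.~\cite[Th.~5.5]{ElaginSchneiderShinder}), atoms are birational invariants: a birational map $X\dashrightarrow Y$ factors into blow-ups and blow-downs at closed points, and each such step only adds or removes atoms equivalent to $\Db$ of finite field extensions of $\kk$. Matching the unique non-trivial atom on each side then yields a $\kk$-linear equivalence $\bA_X\simeq\bA_Y$.

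The main obstacle is showing that this equivalence is of Fourier--Mukai type. The equivalence produced by atomic theory is a composition of fully faithful functors between admissible subcategories, namely pullbacks and pushforwards along blow-ups together with inclusions and projections of semi-orthogonal components. Each of these is a Fourier--Mukai functor with an explicit kernel, since projections onto admissible components of smooth projective varieties have kernels. Their composite, restricted and projected as in the definition, therefore has a kernel in $\Db(X\times Y)$. I would check this by tracing through the proof of~\cite[Prop.~5.1]{ElaginSchneiderShinder} over a perfect field, where the weak factorization into blow-ups of closed points holds. An alternative, if citation suffices, is to note that~\cite{ElaginSchneiderShinder} already produces the equivalence geometrically, via a Galois-invariant kernel descending from $\bar\kk$.
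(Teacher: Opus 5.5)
Your proposal is correct and follows the paper's proof. There, (a)$\Rightarrow$(b) is trivial, (c)$\Rightarrow$(a) is Theorem~\ref{theorem_main-perfect}, and (b)$\Rightarrow$(c) is taken from~\cite{ElaginSchneiderShinder}, with the Fourier--Mukai property justified as you do: the equivalence constructed there is a composite of pull-back, mutation/projection and inclusion functors, each of Fourier--Mukai type. Keep that composite-of-kernels argument as your main line rather than your alternative, because the paper explicitly notes that~\cite{ElaginSchneiderShinder} itself does not check that its equivalence is of Fourier--Mukai type.
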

\begin{proof}
The same as for Theorem~\ref{th_birationalG}. 
Implication \ref{it:dP4--bir} $\implies$ \ref{it:dP4--atoms} follows from~\cite{ElaginSchneiderShinder}. Note that \emph{loc.cit.} proves that the categories $\bA_X$ and $\bA_Y$ are $\kk$-linearly equivalent, but it is not checked that they are equivalent by a functor of Fourier--Mukai type. However, the constructed equivalence comes from pull-back functors, mutation functors, and projection functors, which are of Fourier--Mukai type.
Implication \ref{it:dP4--atoms} $\implies$ \ref{it:dP4--iso} is by Theorem~\ref{theorem_main-perfect}.
\end{proof}

\section{Indecomposability of the orthogonal subcategory}
\label{section_indecomposability}

In the final section we establish that the orthogonal component on \emph{minimal} del Pezzo surfaces of degree $4$ is semi-orthogonally indecomposable; an analogous statement holds for minimal conic bundles of degree $\le 4$.

Recall that a $G$-del Pezzo surface $X$ is called \emph{$G$-minimal} if $\rank(\Pic(X)^G) = 1$. 

\begin{theorem}
\label{th_indecomposability}
Let $\kk$ be an algebraically closed field of characteristic $\ne 2$, and let $X$ be a del Pezzo surface of degree $4$ over $\kk$. Suppose a group $G$ acts on $X$ (not necessarily $\kk$-linearly) in such a way that $X$ is $G$-minimal. Then the orthogonal subcategory $\bA_X$ admits no semi-orthogonal decompositions into $G$-invariant subcategories.
\end{theorem}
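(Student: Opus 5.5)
Assume, for a contradiction, that $\bA_X=\langle \bA_1,\bA_2\rangle$ is a semi-orthogonal decomposition with both components nonzero and $G$-invariant. The strategy is to pass to the weighted projective line via $\bA_X\cong\Db(\cP_X)\cong\Db(\coh\P_X)$ (Proposition~\ref{prop_BO}). There we use the structure theory of admissible subcategories of $\Db$ of a weighted projective curve from~\cite{Elagin_WPC}. That theory should produce a nonempty $G$-invariant subset of $H_{\cP_X}=H_X$ containing at most one class from each pair. Such a subset gives a nonzero $G$-invariant class in $K_X^\perp\subset\Pic(X)_\Q$, which contradicts $G$-minimality.

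\emph{Step 1: a canonical collection of simple sheaves.} I will invoke the main result of~\cite{Elagin_WPC} in the following form. For a proper nonzero admissible subcategory of $\Db(\coh\P)$ with $\P$ of type $(2,2,2,2,2)$, one of the two components, say $\bA_i$, is generated by a \emph{uniquely determined} $\Hom$-orthogonal exceptional collection of torsion sheaves. The exceptional torsion sheaves on such $\P$ are exactly the simple sheaves at the weighted points, i.e.\ the elements of $H_{\cP_X}$ (up to shift). So $\bA_i$ is generated by a nonempty set $S\subset H_{\cP_X}$.

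The set $S$ cannot contain both $F$ and $\tau F$ for the same weighted point. Indeed, $\Ext^1(F,\tau F)\neq 0$ and $\Ext^1(\tau F,F)\neq 0$, so such a pair is neither exceptional in either order nor $\Hom$-orthogonal. Moreover, together they would generate the whole tube $\coh_p\P$, which is not admissible. Hence $\pi|_S\colon S\to Q_X$ is injective.

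I expect Step 1 to be the main obstacle. One must check that the relevant statement from~\cite{Elagin_WPC} really covers every decomposition in the wild case $(2,2,2,2,2)$, including the case where the torsion-generated component is the right rather than the left one. One must also rule out, or handle, decompositions where neither piece is of this form. Here I would first try the dichotomy that one piece contains a sheaf of positive rank, forcing its orthogonal to consist of torsion objects.

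\emph{Step 2: $G$-invariance of $S$.} Each $g\in G$ acts on $\bA_X$ by $(\rho_g)_\bA$. By Lemma~\ref{lemma_AutDbCohP} this action preserves $\cP_X$ up to shift, and since it fixes the set $\{\cO_X(-h)\}\subset\cP_X$, the shift is zero. By Lemma~\ref{lemma_HH}, the action permutes $H_{\cP_X}$ exactly as $\rho_g$ permutes $H_X$ via the identification $h\leftrightarrow\cO_X(-h)$. Because $\bA_i$ is $G$-invariant and its generating collection is unique, $g(S)=S$ for all $g$. So $S$ corresponds to a $G$-invariant subset $S_X\subset H_X$ with at most one class over each point of $Q_X$.

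\emph{Step 3: contradiction with minimality.} Consider the vector
$$v=\sum_{h\in S_X}\bigl(h+\tfrac12K_X\bigr)\in K_X^\perp\otimes\Q.$$
Write $\oH_X=\{\pm\oh_1,\dots,\pm\oh_5\}$ with $\oh_i$ orthonormal. Since $S_X$ takes at most one sign per index, $v$ is a nonzero sum of distinct basis vectors with signs, so $v\neq0$. Also $v$ is $G$-invariant, because the $G$-action on $\Pic(X)$ preserves $K_X$ and permutes $S_X$. Together with $K_X$ this gives $\rank\Pic(X)^G\ge2$, which contradicts $G$-minimality. Therefore no such decomposition exists.
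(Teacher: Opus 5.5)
Your Step~1 fails, and not just for want of a precise citation. The result of \cite{Elagin_WPC} does not say that one component is always generated by torsion sheaves. The dichotomy the paper uses (\cite[Th.~8.2]{Elagin_WPC}) is as follows: for a nontrivial decomposition $\Db(\coh\P)=\langle\bA,\bB\rangle$, either (a) both $\bA$ and $\bB$ are quiver-like, i.e.\ generated by the simple objects of $\bA\cap\coh\P$ and $\bB\cap\coh\P$, which are exceptional \emph{sheaves} and may be vector bundles; or (b) one of them is quiver-like and consists of torsion sheaves. Case (a) really occurs with no torsion piece at all. Take $\Db(\coh\P)=\langle\cO^\perp,\langle\cO\rangle\rangle$. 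Clearly $\langle\cO\rangle$ contains no torsion sheaves. Nor is $\cO^\perp$ generated by torsion sheaves. Torsion classes span a rank-$6$ subgroup of $K_0(\P)$ containing $[\cO_x]$, and $K_0(\cO^\perp)$ also has rank $6$. So if $\cO^\perp$ were generated by torsion sheaves, $K_0(\cO^\perp)$ would contain a nonzero multiple of $[\cO_x]$. But $\chi(\cO,-)$ vanishes on $K_0(\cO^\perp)$, while $\chi(\cO,\cO_x)=1$. The same example refutes your fallback: $\langle\cO\rangle$ contains a sheaf of positive rank, yet its orthogonal is not torsion. In case (a) there is no subset of $H_X$ to extract, so your Steps~2--3 have nothing to act on, and the proof is incomplete.

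The missing idea is a separate treatment of case (a) using minimality, which is what the paper supplies.
\begin{enumerate}
\item The $G$-equivariant isomorphism $K_0(X)\cong\Z\oplus\Pic(X)\oplus\Z$ gives $\rank K_0(\bA_X)^G=2$.
\item In case (a), the simples of $\bA\cap\coh\P$ and $\bB\cap\coh\P$ form a full exceptional collection permuted by $G$. It splits into mutually orthogonal $G$-orbits whose class sums are linearly independent, so rank $2$ forces exactly two orbits: $\{E_i\}$ spanning $\bA$ and $\{F_j\}$ spanning $\bB$.
\item Lemma~\ref{lemma_EF} puts $\Hom^\bullet(\oplus E_i,\oplus F_j)$ in a single degree $d$. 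Hence $E_1,\dots,E_n,F_1[d],\dots,F_m[d]$ is a full strong collection whose endomorphism algebra is the path algebra of a bipartite quiver.
\item This hereditary algebra would be derived equivalent to $\coh\P$, which is impossible because $\P$ of type $(2,2,2,2,2)$ is not domestic.
\end{enumerate}
Your Steps~2--3 do correctly dispose of case (b). The exceptional torsion sheaves are exactly the ten simples at weighted points, $S$ meets each fibre of $\pi$ at most once, and $v=\sum\pm\oh_i$ is a nonzero $G$-invariant class in $K_X^\perp$. This is a Picard-lattice reformulation of the paper's argument with the invariant classes $\sum_{i\in I}[E_i]$ and $[\cO_x]$ in the rank-one space $F^1K_0(\P)^G_{\numerical}$. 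Without case (a), however, the theorem is not proved.
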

\begin{corollary}
\label{corollary_indecomposability}
Let $\kk$ be a perfect field of characteristic $\ne 2$, and let $X$ be a minimal del Pezzo surface of degree $4$ over $\kk$. Then the orthogonal subcategory $\bA_X$ admits no semi-orthogonal decompositions.
\end{corollary}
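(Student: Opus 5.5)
The plan is to deduce Corollary~\ref{corollary_indecomposability} from Theorem~\ref{th_indecomposability} by passing to the algebraic closure and using Galois descent for semi-orthogonal decompositions, as in~\cite[Section 3.5]{ElaginSchneiderShinder}.

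Let $G=\Gal(\bar\kk/\kk)$ and $\oX=X\times_\kk\Spec\bar\kk$. The Galois group acts on $\oX$ by non-$\bar\kk$-linear automorphisms. Minimality of $X$ means $\rank\Pic(X)=1$. Since $\kk$ is perfect and $X$ is geometrically rational, $\Pic(X)\otimes\Q=(\Pic(\oX)\otimes\Q)^G$, so $\oX$ is $G$-minimal.

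Suppose $\bA_X=\langle\cA_1,\cA_2\rangle$ is a nontrivial semi-orthogonal decomposition. The components are admissible in $\Db(X)$, since $\bA_X$ is admissible and $\Db(X)$ is saturated. By~\cite[Section 3.5]{ElaginSchneiderShinder} and~\cite{Kuznetsov_BaseChange}, base change gives a semi-orthogonal decomposition
$$\bA_{\oX}=\langle\bar\cA_1,\bar\cA_2\rangle.$$
Here $\bar\cA_i$ is generated by pull-backs of objects of $\cA_i$, equivalently it is the image of the base-changed projection kernels. By construction the $\bar\cA_i$ are $G$-invariant subcategories, because pull-backs from $X$ are $G$-invariant objects.

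Next we check nontriviality. If $\bar\cA_1=0$, then every $F\in\cA_1$ satisfies $\bar F=0$. Faithful flatness of $\bar\kk/\kk$ then gives $F=0$, so $\cA_1=0$, a contradiction; the same argument applies to $\cA_2$. This produces a nontrivial semi-orthogonal decomposition of $\bA_{\oX}$ into $G$-invariant subcategories, which contradicts Theorem~\ref{th_indecomposability} applied to the $G$-surface $\oX$.

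The main point to justify carefully is the descent step: that the base-changed components form a semi-orthogonal decomposition with $G$-invariant pieces, and that nonzero components stay nonzero. I would cite the base change results of~\cite{Kuznetsov_BaseChange} and the Galois descent formalism of~\cite{ElaginSchneiderShinder} rather than redo them. These apply because $\bar\kk/\kk$ is a possibly infinite filtered colimit of finite Galois extensions, and the projection functors are of Fourier--Mukai type.
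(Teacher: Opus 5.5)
Your proposal is correct and follows the paper's proof. You pass to $\oX$ with its Galois action, note that $\oX$ is $G$-minimal, and use scalar extension of semi-orthogonal decompositions \cite[Section~3.5]{ElaginSchneiderShinder} to get a $G$-invariant decomposition of $\bA_{\oX}$, which contradicts Theorem~\ref{th_indecomposability}; your extra checks (that $\Pic(X)_\Q\cong(\Pic(\oX)_\Q)^G$, and that the base-changed components stay nonzero by faithful flatness) simply fill in details the paper leaves implicit.
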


\begin{proof}[Proof of Corollary~\ref{corollary_indecomposability}]
    Consider the scalar extension $\oX = X \times_\kk \Spec\bar\kk$, endowed with the natural action of the Galois group $G = \Gal(\bar\kk/\kk)$. The surface $\oX$ is $G$-minimal, hence Theorem~\ref{th_indecomposability} is applicable to $\oX$. 

    The claim then follows from the behaviour of semi-orthogonal decompositions under scalar extension: any semi-orthogonal decomposition of $\bA_X$ induces, by scalar extension, a $G$-invariant semi-orthogonal  decomposition of $\bA_{\oX}$; see, for instance, \cite[Section~3.5]{ElaginSchneiderShinder}.
\end{proof}

\begin{proof}[Proof of Theorem~\ref{th_indecomposability}]
Since $X$ is rational, there exists a $G$-equivariant isomorphism (see, for instance, \cite[Lemma 4.2]{KarmazynKuznetsovShinder})
$$K_0(X)\xra{(\rank, c_1,\chi)} \Z \oplus \Pic(X) \oplus \Z,$$
where the group $G$ acts trivially on both copies of $\Z$. Consequently, we obtain
$$\rank((K_0(X)^G)) = 3.$$
In the semi-orthogonal decomposition
$$\Db(X) = \langle \bA_X, \langle \cO_X \rangle \rangle$$
the structure sheaf $\cO_X$ is $G$-invariant and exceptional. Hence, passing to $G$-invariants on $K$-theory, we deduce
$$\rank((K_0(\bA_X)^G)) = 2.$$
Recall that there is an equivalence of triangulated categories
$$ \bA_X \cong \Db(\coh \P), $$
where $\P = \P_X$ is the weighted projective line of type $(2,2,2,2,2)$ associated with $X$. The group $G$ acts on the abelian category $\coh \P$ in such a way that the above equivalence is $G$-equivariant. Therefore,
$$ \rank((K_0(\coh \P)^G)) = \rank((K_0(\bA_X)^G)) = 2. $$
Thus, the proof is reduced to a statement concerning weighted projective lines; see Proposition~\ref{prop_indecomp-for-P} below.
\end{proof}
In the following proposition, we work in a more general setting by considering not only weighted projective lines, but also weighted projective curves of arbitrary genus, subject to only mild constraints on the weights. This generalization is motivated by subsequent applications; see Theorem~\ref{th_indecomposability2}. Let $\P$ denote a weighted projective curve (see, for instance,~\cite{Elagin_WPC}). 

We write $K_0(\P)=K_0(\coh\P)$ for the Grothendieck group of the abelian category $\coh \P$, and denote by $K_0(\P)_{\numerical}$ the numerical Grothendieck group, defined as the quotient of $K_0(\P)_\Q$ by the kernel of the Euler form. Observe that if $\P$ is rational, then the Euler form is non-degenerate, and consequently $K_0(\P)_{\numerical}=K_0(\P)_\Q$.

\begin{prop}
    \label{prop_indecomp-for-P}
Assume that the base field $\kk$ is algebraically closed.
Let $\P$ be a weighted projective curve over $\kk$ and, in the case where the underlying curve is rational, suppose in addition that $\P$ is not of domestic type. Furthermore, let a group $G$ act on the abelian category $\coh \P$ of coherent sheaves on $\P$ in such a way that $\rank(K_0(\P)_{\numerical}^G)=2$.
Under these assumptions, the bounded derived category $\Db(\coh \P)$ does not admit semi-orthogonal decompositions into $G$-invariant subcategories.
\end{prop}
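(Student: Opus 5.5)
Suppose $\Db(\coh\P)=\langle\cA,\cB\rangle$ is a semi-orthogonal decomposition with $\cA,\cB$ nonzero and $G$-invariant. We derive a contradiction from the $K$-theory count. The key input, from~\cite{Elagin_WPC} as announced in the introduction, is a structure theorem for admissible subcategories of $\Db(\coh\P)$ when $\P$ is not a domestic rational curve. Every proper nonzero admissible subcategory is of one of two kinds.
\begin{itemize}
\item It is generated by a uniquely determined $\Hom$-orthogonal exceptional collection of sheaves (up to shift).
\item It is the orthogonal of such a subcategory.
\end{itemize}
The sheaves in such a collection are in fact torsion sheaves supported at weighted points. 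This is where non-domesticity enters: for positive genus, or for wild and tubular weighted projective lines, there are no exceptional vector bundles that could generate a proper admissible piece other than the trivial ones.

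Since $\cA$ and $\cB=\cA^\perp$ determine each other, we may assume that one of them, say $\cA$, is generated by a $\Hom$-orthogonal exceptional collection $E_1,\dots,E_m$ with $m\ge1$. Uniqueness of this collection and $G$-invariance of $\cA$ imply that $G$ permutes the set $\{E_1,\dots,E_m\}$ up to isomorphism and shift. We may fix the shifts so that the $E_i$ are sheaves; since $G$ acts on the abelian category $\coh\P$, it preserves sheaves, so $G$ permutes the isomorphism classes $[E_i]$ honestly.

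Set $e=\sum_i[E_i]$, a $G$-invariant class in $K_0(\P)$. It is nonzero in $K_0(\P)_{\numerical}$ because $\chi(e,e)=m\neq0$, using $\Hom$-orthogonality and exceptionality. It is also orthogonal to every class coming from $\cB$, or from the other side, with respect to the Euler form.

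Next we produce the invariant classes. The $G$-invariant part of $K_0(\P)_{\numerical}$ contains two natural classes.
\begin{itemize}
\item The rank class, which is invariant because rank is intrinsic: $G$ acts on $\coh\P$, preserves torsion sheaves, and hence preserves the rank function.
\item The class $[\cO_p]$ of a point sheaf, i.e. the class of the full $\tau$-orbit of simples at an ordinary point. All such classes coincide numerically, so this class is $G$-invariant.
\end{itemize}
These two classes are linearly independent, so by the hypothesis $\rank(K_0(\P)_{\numerical}^G)=2$ they span the invariants over $\Q$.

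Hence $e=a[\cO]+b[\cO_p]$ numerically, for some rationals $a,b$. The $E_i$ are torsion, so $\rank e=0$ and therefore $a=0$, giving $e=b[\cO_p]$ with $b\neq0$. But $[\cO_p]$ lies in the radical of the Euler form restricted to torsion classes: $\chi(\cO_p,T)=\chi(T,\cO_p)=0$ for any torsion $T$, because $\cO_p$ is $\tau$-invariant and sits at an unweighted point. It follows that $\chi(e,e)=b^2\chi(\cO_p,\cO_p)=0$, contradicting $\chi(e,e)=m\ge1$.

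The main obstacle is the classification step: justifying from~\cite{Elagin_WPC} that every nontrivial admissible subcategory in the non-domestic case is generated, on one side, by an essentially unique $\Hom$-orthogonal exceptional collection of torsion sheaves. Care is also needed to ensure the $G$-action (possibly non-$\kk$-linear) preserves this uniqueness. If the exceptional objects could also be vector bundles, I would instead use $\chi(e,e)>0$ together with the explicit shape of the Euler form on the span of $[\cO]$ and $[\cO_p]$. That form is $\chi(a[\cO]+b[\cO_p],a[\cO]+b[\cO_p])=a^2\chi(\cO,\cO)$ with $\chi(\cO,\cO)=1-g$. For $g\ge1$ this is $\le0$, which already gives the contradiction. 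For $g=0$ it forces $e$ to be proportional to the class of a line bundle, which then has to be excluded using non-domesticity (wild or tubular type), via orthogonality to $\cB$ and the Serre functor $\tau[1]$.
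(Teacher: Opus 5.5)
Your proof has a genuine gap: it treats only one of the two cases, because the classification you start from is false for rational $\P$. On every weighted projective line, domestic or not, all line bundles are exceptional, and $\Db(\coh\P)$ has full exceptional collections of bundles, e.g.\ $\cO,\cO(\bar x_1),\dots,\cO(\bar x_n),\cO(\bar c)$. Thus $\Db(\coh\P)=\langle\langle\cO\rangle^\perp,\langle\cO\rangle\rangle$ is a decomposition in which neither piece consists of torsion sheaves. What \cite[Th.~8.2]{Elagin_WPC} actually gives is a dichotomy: (a) both pieces are quiver-like, their simples may be bundles, and $\P$ must be rational; or (b) one piece is quiver-like and consists of torsion sheaves. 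You only treat (b). Non-domesticity does not exclude (a); it is used to exclude \emph{$G$-invariant} decompositions of type (a). Your fallback does not do this: it rests on the false premise that $[\cO]$ and $[\cO_p]$ span the invariants, and its last step is only gestured at. The missing argument runs as follows. $G$ permutes the simples of the heart of each piece, and these form a directed exceptional collection, so each $G$-orbit is a completely orthogonal block. The orbit classes are sums of disjoint sets of basis vectors of $K_0$, hence linearly independent, so rank $2$ forces one orbit $\{E_i\}$ in one piece and one orbit $\{F_j\}$ in the other. Lemma~\ref{lemma_EF} shows $\Hom^\bullet(\bigoplus E_i,\bigoplus F_j)$ is concentrated in one degree $d$: via mutations, a nonzero map $E_i\to F_j$ would be both injective and surjective. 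Hence $\bigoplus E_i\oplus\bigoplus F_j[d]$ is a tilting object whose endomorphism algebra is a bipartite path algebra, hence hereditary. This is impossible for a non-domestic weighted projective line.

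In case (b) your argument is essentially the paper's, but two steps need repair.

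First, $\chi(e,e)=m$ does not follow when $e$ is the sum over \emph{all} simples of the torsion piece. These simples are $\Hom$-orthogonal but have $\Ext^1$ along the arrows of $\Gamma$ (e.g.\ two consecutive simples in a tube of rank $\ge 3$), so $\chi(e,e)=m-\#\{\text{arrows}\}$. The paper instead takes the class of a single $G$-orbit, which is completely orthogonal, so $\chi(e,e)=|I|>0$.

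Second, $[\cO]$ need not be $G$-invariant: $G$ may act through twists by $\L$. For a del Pezzo surface, $\Aut^0(X)$ acts through $\L^{\tors}$ and moves $\cO$. Invariance of the rank functional gives no new invariant class, since $\rank=\chi(-,[\cO_p])$. What you need is that rank does not vanish on $K_0(\P)^G_{\numerical}$, so that the torsion invariants form the line $\Q[\cO_p]$. The paper reads this off the rank exact sequence. That is justified when $G$ acts on $K_0$ through a finite group (average $[\cO]$ over it), as in all its applications. It is a real condition: take a line of type $(2,2,2,2,2)$ whose weighted points have a cyclic symmetry of order $5$, and let $G$ be generated by $-\otimes\cO(\bar c)$ and an order-$5$ lift of that symmetry. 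Then the invariants are spanned by $[\cO_p]$ and $\sum_i[S_i]$, where $S_1,\dots,S_5$ is a $G$-orbit of simple sheaves at the weighted points. Moreover, $\langle S_1,\dots,S_5\rangle$ is a $G$-invariant admissible subcategory.
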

\begin{proof}
Let $F^1K_0(\P)_{\numerical}\subset K_0(\P)_{\numerical}$ be the subspace generated by classes of torsion sheaves; then there is a $G$-invariant exact sequence $0\to F^1K_0(\P)_{\numerical}\to K_0(\P)_{\numerical}\xra{\rank} \Q\to 0$. It follows that 
\begin{equation}
    \label{eq_rank}
    \rank(F^1K_0( \P)_{\numerical}^G)=1.
\end{equation}
The proof of the Proposition is based on the analysis of triangulated  subcategories in $\Db(\coh\P)$ from~\cite{Elagin_WPC}. Recall the necessary results. For a thick triangulated subcategory $\bA\subset \Db(\coh \P)$ let $\cA=\bA\cap \coh \P$; this is an exact abelian subcategory of $\coh \P$ and the natural functor $\Db(\cA)\to \Db(\coh \P)$ is fully faithful with  image $\bA$, see, for instance,~\cite[Prop. 3.7]{Elagin_WPC}. 
Such a triangulated subcategory $\bA$ is called \emph{quiver-like} if the abelian category $\cA$ is equivalent to the category $\moddo \kk\Gamma$ of nilpotent finite-dimensional representations of some quiver $\Gamma$. In this case the simple objects of $\cA$ correspond to simple modules concentrated at vertices of $\Gamma$. If $\bA$ is left or right admissible then $\Gamma$ has to be finite and without oriented cycles (\cite[Prop. 4.6]{Elagin_WPC}); in particular, the simple objects of $\cA$ form a full  exceptional collection in $\bA$.

Assume that a semi-orthogonal decomposition $\Db(\coh\P)=\langle\bA,\bB\rangle$ into non-trivial subcategories is given. Then one of the following holds by~\cite[Th. 8.2, Def. 7.1]{Elagin_WPC}:
\begin{enumerate}[label=(\alph*)]
    \item both $\bA$ and $\bB$ are quiver-like; \label{eq_case-a}
    \item either $\bA$ or $\bB$ is quiver-like and contains only torsion sheaves. \label{eq_case-b}
\end{enumerate}
Assume case~\ref{eq_case-a}. Let $E_1,\ldots, E_n$ and $F_1,\ldots,F_m$ be full exceptional collections in $\bA, \bB$ formed by simple objects in $\cA, \cB$ respectively. Then $\Db(\P)$ has a full exceptional collection, therefore
$K_0(\P)$ is finitely generated, and $\P$ must be rational, so we have $K_0(\P)_{\numerical}=K_0(\P)_\Q$.  
Since $G$ preserves the abelian category $\cA$, $G$ must permute its simple objects $E_i$, and similarly for $F_j$. It is easy to see that the collections $E_1,\ldots, E_n$ and $F_1,\ldots,F_m$ split into exceptional orthogonal blocks (for instance, see~\cite[Lemma 2.12]{Ballard-Duncan-McFaddin}), each being a $G$-orbit of sheaves. For any such $G$-orbit 
$\{E_i\}_{i\in I}$ the class $\oplus_{i\in I}[E_i]\in K_0( \P)$ is $G$-invariant, and such classes are linearly independent in $K_0(\P)$ because the classes $[E_1],\ldots,[E_n], [F_1], \ldots,[F_m]$ of objects of a full exceptional collection form a basis in $K_0(\P)$. By the assumption $\rank(K_0(\P)^G)=2$ we get that there are just two such orbits, that is, $G$ acts transitively on the orthogonal blocks $E_1,\ldots, E_n$ and $F_1,\ldots,F_m$. By Lemma~\ref{lemma_EF} below,  the graded vector space $\Hom^\bul(\oplus_i E_i, \oplus_j F_j)$ is concentrated in one degree $d=0$ or $1$. Then the collection $E_1,\ldots, E_n, F_1[d],\ldots,F_m[d]$ is a full strong exceptional collection in $\Db(\coh\P)$, consisting of two blocks. It provides (by~\cite{Bondal_associative}) an equivalence $\Db(\coh\P)\cong \Db(\modd A)$, where $A=\End((\oplus_i E_i)\oplus (\oplus_j F_j[d]))$ is the path algebra of a bipartite quiver, hence $A$ is hereditary. This gives a contradiction because $\P$ is a weighted projective line not of domestic type, see, for instance,~\cite[Section 10.1]{Lenzing_hercat}.

Assume now case~\ref{eq_case-b}; suppose without loss of generality that $\bA$ is quiver-like and contains only torsion sheaves. Let $E_1,\ldots, E_n$ be a full exceptional collection in $\bA$ formed by simple objects in~$\cA$. As above, we observe that this collection splits into orthogonal blocks which are $G$-orbits. Let $\{E_i\}_{i\in I}$ be such a block. Consider the classes $e=\oplus_{i\in I}[E_i]$ and $f=[\cO_x]$ in $K_0(\P)_{\numerical}$, where $x\in \P$ is any non-weighted point. These classes are $G$-invariant; also, they are not proportional. Indeed, for the Euler form one has $\chi(e,e)=\sum_{i\in I}\chi(E_i,E_i)=|I|>0$ since $\{E_i\}_{i\in I}$ is an orthogonal exceptional block, but $\chi(f,f)=0$. Note that the $E_i$'s are torsion sheaves, so both $e,f\in F^1K_0(\P)_{\numerical}^G$, and we get  a contradiction with~\eqref{eq_rank}.

Therefore both cases~\ref{eq_case-a} and~\ref{eq_case-b} are impossible and $\Db(\P)$ has no semi-orthogonal decompositions into $G$-invariant subcategories.
\end{proof}

Using Proposition~\ref{prop_indecomp-for-P} we can also show indecomposability of the ``big atom'' of a minimal two-dimensional conic bundle. Let $\kk$ be algebraically closed. Assume $f\colon X\to C$ is a conic bundle over  a smooth projective curve $C$, and assume a group $G$ acts on $X$ and $C$ compatibly. There is a $G$-invariant semi-orthogonal decomposition~\cite[Prop. 2.3]{Bridgeland_Flops}
\begin{equation}
\label{eq_SOD-conic-bundle}
    \Db(X)=\langle \ker f_*, f^*\Db(C)\rangle, 
\end{equation}
where $\ker f_*\subset \Db(X)$ is the full subcategory of objects sent to zero by the derived push-forward functor $f_*$ and $f^*\Db(C)$ is the image of the fully faithful derived pull-back functor $f^*$.
It is said that $X/C$ is \emph{relatively $G$-minimal} if $\rank(\Pic(X/C)^G)=1$; suppose this is the case. Assume that~$C$ is irrational or $C$ is rational and $K_X^2\le 4$. Then in~\cite[Defs. 2.11, 4.21, 5.11, Prop. 2.12]{ElaginSchneiderShinder} the category 
$$\bA_{X/C}=\ker f_*\subset \Db(X)$$
is called the \emph{big atom} of $X/C$. Now we justify this name by showing that $\bA_{X/C}$ is indeed indecomposable.

\begin{theorem}
\label{th_indecomposability2}
Let $\kk$ be an algebraically closed field of characteristic $\ne 2$, let $f\colon X\to C$ be a conic bundle over a smooth curve. Suppose  that $C$ is irrational or $C$ is rational and $K_X^2\le 4$. Assume a group $G$ acts on $X$ and $C$ compatibly (not necessarily $\kk$-linearly) so that $X/C$ is relatively $G$-minimal. Then the big atom  $\bA_{X/C}$ has no semi-orthogonal decompositions into $G$-invariant subcategories.
\end{theorem}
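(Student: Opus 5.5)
The plan is to reduce to Proposition~\ref{prop_indecomp-for-P}, following the proof of Theorem~\ref{th_indecomposability} for the del Pezzo case. Let $x_1,\ldots,x_r\in C$ be the points over which $f$ has singular (reducible) fibres. By Kuznetsov's description of derived categories of quadric fibrations~\cite{Kuznetsov2008_Derived-categories-of-quadric-fibrations}, the category $\ker f_*$ is equivalent to $\Db(\modd \cB_0)$, where $\cB_0$ is the sheaf of even parts of Clifford algebras on $C$. Since $\har\kk\ne 2$ and the degenerate fibres have corank one, $\modd\cB_0$ is equivalent to $\coh\P$. Here $\P$ is the weighted projective curve with underlying curve the double cover $\tilde C\to C$ parametrising the components of the fibres, which is étale away from the $x_i$. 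More precisely, I would use the root-stack version: $\P$ has underlying curve $C$ with weight $2$ at each $x_i$, twisted by the Brauer-type data of $\cB_0$. I would argue that this equivalence is natural with respect to automorphisms of $X/C$, exactly as in Lemma~\ref{lemma_epsilon}. Then the $G$-action on $\bA_{X/C}$ corresponds to a $G$-action on the abelian category $\coh\P$, possibly after fixing the heart as in the construction of $\cP_X$, since any autoequivalence of $\Db(\coh\P)$ preserves the heart up to shift by Lemma~\ref{lemma_AutDbCohP}.

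Next, I would verify the hypotheses of Proposition~\ref{prop_indecomp-for-P}. The first is the domestic condition when $C$ is rational. Since $K_X^2=8-r$, the assumption $K_X^2\le 4$ means $r\ge 4$, so $\P$ is a weighted projective line of type $(2,\ldots,2)$ with at least four weights. Then $\deg\bar\omega=-2+r/2\ge 0$, so $\P$ is tubular or wild, not domestic. When $C$ is irrational, no such condition is needed.

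The second hypothesis is $\rank(K_0(\P)^G_{\numerical})=2$. Numerically, $K_0(X)_{\numerical}$ is isomorphic, $G$-equivariantly, to $\Z\oplus\Pic(X)_{\numerical}\oplus\Z$ via $(\rank,c_1,\chi)$. By~\eqref{eq_SOD-conic-bundle} and $K_0(C)_{\numerical}\cong\Z^2$ with trivial $G$-action on numerical classes (rank and degree), we get $K_0(\bA_{X/C})_{\numerical}$ of rank $\rank\Pic(X)_{\numerical}-2+2-2=\rank\Pic(X)-\ldots$. The exact count is best done via $\Pic(X)=\Z f^*\text{pt}\oplus\Z K_X\oplus\Pic(X/C)$-type decompositions. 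The $G$-invariant part of $K_0(X)_{\numerical}$ has rank $2+\rank\Pic(X)^G=2+(2+\ldots)$, using $\rank\Pic(X)^G=2$ from relative minimality; this gives rank $4$. Subtracting the $G$-invariant rank $2$ contributed by $f^*\Db(C)$ then gives rank $2$ for $K_0(\bA_{X/C})^G_{\numerical}$. I would make sure the numerical quotients are compatible with the semi-orthogonal decomposition: the Euler form is block upper triangular, so numerical $K_0$ splits as a direct sum. With that compatibility, the rank-$2$ count transfers to $K_0(\P)^G_{\numerical}$ through the equivalence, and Proposition~\ref{prop_indecomp-for-P} finishes the proof.

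The main obstacle is the first step: identifying $\ker f_*$ with $\Db(\coh\P)$ for a genuine weighted projective curve over an arbitrary base $C$. Over a non-rational $C$ the even Clifford algebra may be a nontrivial Azumaya algebra on the non-weighted locus. Over algebraically closed $\kk$, though, Tsen's theorem kills the Brauer group of the curve, so $\cB_0$ is Morita equivalent to the hereditary order describing $\coh\P$. Doing this $G$-equivariantly, including for non-$\kk$-linear actions, is the delicate point; I would handle it by the same twisting-of-scalars argument used in Lemma~\ref{lemma_diagonal}.
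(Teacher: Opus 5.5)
Your proposal is essentially the paper's proof. It uses the same numerical count $\rank K_0(\bA_{X/C})^G_{\numerical}=4-2=2$, the same Kuznetsov equivalence $\bA_{X/C}\cong\Db(\coh\P)$ with $\P$ the curve $C$ itself weighted by $2$ at the $r\ge 4$ degenerate points (Tsen's theorem removes any Brauer twist), and then Proposition~\ref{prop_indecomp-for-P}.

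Three points need tidying:
\begin{enumerate}
\item Justify $\rank\Pic(X)^G_{\numerical}=2$ by the exact sequence $0\to\Pic(C)_{\numerical}\to\Pic(X)_{\numerical}\to\Pic(X/C)_\Q\to 0$, which uses $\Pic^0(X)=f^*\Pic^0(C)$, as the paper does; the decomposition you wrote is garbled.
\item Drop the double-cover description: for conics $\cB_0$ is generically central simple, so there is no discriminant double cover.
\item Obtain the $G$-action on $\coh\P$ from the naturality of Kuznetsov's construction, not from Lemma~\ref{lemma_AutDbCohP}. That lemma excludes the tubular type, and the tubular case $r=4$ does occur here.
\end{enumerate}
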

\begin{corollary}
\label{corollary_indecomposability2}
Let $\kk$ be a perfect field of characteristic $\ne 2$, let $f\colon X\to C$ be a relatively minimal conic bundle over a smooth curve. Suppose  that $C$ is geometrically irrational or $C$ is geometrically rational and $K_X^2\le 4$.  Then the big atom  $\bA_{X/C}$ has no semi-orthogonal decompositions.
\end{corollary}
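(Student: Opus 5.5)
The plan is to reduce Theorem~\ref{th_indecomposability2} to Proposition~\ref{prop_indecomp-for-P}, exactly as Theorem~\ref{th_indecomposability} was reduced. The needed input is a $G$-equivariant equivalence between $\bA_{X/C}$ and $\Db(\coh\P)$ for a suitable weighted projective curve $\P$ over $C$. This is the Kuznetsov description for quadric fibrations~\cite{Kuznetsov2008_Derived-categories-of-quadric-fibrations}, the same ingredient as in Proposition~\ref{prop_BO}.

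For a conic bundle $f\colon X\to C$, the sheaf $\cB_0$ of even Clifford algebras is defined on $C$, and $\ker f_*\cong \Db(\modd\cB_0)$. Where the fibre is a smooth conic, $\cB_0$ is a matrix algebra locally, so it is Morita-trivial. At each of the $n$ degenerate fibres, which are pairs of lines, $\cB_0$ is locally a hereditary order of type $(2)$. Since $\kk$ is algebraically closed, the Brauer group of $C$ is trivial. Hence $\modd\cB_0\simeq\coh\P$, where $\P$ is $C$ with weight $2$ at each of the $n$ degeneration points. Naturality of Kuznetsov's construction with respect to automorphisms, as in the proof of Lemma~\ref{lemma_epsilon}, makes $G$ act on $\coh\P$ compatibly with its action on $\bA_{X/C}$. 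Here $G$ acts through $X$ and $C$, not necessarily $\kk$-linearly, but the construction is canonical, so this causes no problem.

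Next, check the hypotheses of Proposition~\ref{prop_indecomp-for-P}.

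\textbf{Domesticity.} If $C$ is rational, then $K_X^2=8-n$, so $K_X^2\le4$ gives $n\ge4$. The type is then $(2,2,2,2,\dots)$, which is tubular for $n=4$ and wild for $n>4$, so $\P$ is not domestic. If $C$ is irrational, no condition is needed.

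\textbf{Rank.} We need $\rank(K_0(\P)^G_{\numerical})=2$. The plan is to use the numerical Grothendieck group of $X$. From~\eqref{eq_SOD-conic-bundle}, numerically
\[
K_0(X)_{\numerical}=K_0(\bA_{X/C})_{\numerical}\oplus K_0(C)_{\numerical}.
\]
Here $K_0(C)_{\numerical}\cong\Q^2$ (rank and degree) with trivial $G$-action. Numerically, $K_0(X)_{\numerical}\cong\Q\oplus\mathrm{NS}(X)_\Q\oplus\Q$ via $(\rank,c_1,\chi)$, $G$-equivariantly. Moreover,
\[
\mathrm{NS}(X)^G_\Q=\Q K_X\oplus\Q F
\]
by relative $G$-minimality, where $F$ is the fibre class. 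So $\rank K_0(X)^G_{\numerical}=4$, hence $\rank K_0(\bA_{X/C})^G_{\numerical}=2$. This uses that the numerical quotients are compatible with the semi-orthogonal decomposition: the Euler form is block-triangular and non-degenerate on each piece because Serre duality holds on each admissible subcategory. Taking $G$-invariants is exact over $\Q$ when $G$ acts through a finite quotient. In general I would instead argue that the invariants of a direct sum of $G$-stable summands add up, which holds regardless.

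The equivalence $\bA_{X/C}\cong\Db(\coh\P)$ identifies the numerical groups $G$-equivariantly, so Proposition~\ref{prop_indecomp-for-P} applies and yields the theorem. Corollary~\ref{corollary_indecomposability2} then follows by base change to $\bar\kk$ with $G=\Gal(\bar\kk/\kk)$, as in Corollary~\ref{corollary_indecomposability}. For this step, relative minimality over $\kk$ must be read as relative $G$-minimality of $\oX/\oC$.

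The main obstacle I expect is the first step: producing the \emph{$G$-equivariant} identification $\modd\cB_0\simeq\coh\P$ for a non-$\kk$-linear action on an arbitrary-genus base. This includes checking the local structure of $\cB_0$ at singular fibres and the global Morita triviality. If the equivariance is delicate, I would avoid the identification with $\coh\P$ itself. I would work directly with the $G$-action on the abelian heart $\modd\cB_0$, which is intrinsic up to shift, and note that Proposition~\ref{prop_indecomp-for-P} only uses a $G$-action on an abelian category equivalent to $\coh\P$.
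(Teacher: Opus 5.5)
Your proposal is correct and follows the paper's route. The paper obtains the corollary by base change to $\bar\kk$ with the Galois action, exactly as in Corollary~\ref{corollary_indecomposability}, and you reconstruct the underlying Theorem~\ref{th_indecomposability2} the same way: Kuznetsov's equivalence $\bA_{X/C}\cong\Db(\coh\P)$ with weight $2$ at the $n$ degenerate fibres, the count $\rank(K_0(\bA_{X/C})^G_{\numerical})=4-2=2$, and $n\ge 4$ ruling out the domestic case before applying Proposition~\ref{prop_indecomp-for-P}. Your description $\mathrm{NS}(X)^G_\Q=\Q K_X\oplus\Q F$ is equivalent to the paper's exact sequence of invariants, and it has the advantage of making the surjectivity explicit through the invariant class $K_X$.
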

\begin{proof}[Proof of Corollary~\ref{corollary_indecomposability2}]
  See the proof of Corollary~\ref{corollary_indecomposability}.
\end{proof}
\begin{proof}[Proof of Theorem~\ref{th_indecomposability2}]
As in the proof of Theorem~\ref{th_indecomposability} we get a constraint on the rank of the $G$-invariant part of the Grothendieck group of  $\bA_{X/C}$. To accommodate irrational base curves, we need to consider the \emph{numeric} version of the relevant invariants. 
Let $K_0(\bT)_{\numerical}=(K_0(\bT)_\Q)/\ker\chi$ be the numeric Grothendieck group of a triangulated category $\bT$ (with respect to the Euler form $\chi$). For $0\le k\le 2$, let $N^k(X)_{\numerical}=(A^k(X)_\Q)/_{\equiv}$ be the $k$-th numeric Chow group of $X$. Denote by $\Pic(X)_{\numerical}=(\Pic (X)_\Q)/\equiv$ the numeric Picard group of $X$ (with respect to the intersection form).  Then the Chern character gives an isomorphism (for instance, by~\cite[Lemma 2.1]{ElaginLunts2016_Exceptional-Collections-on-del-Pezzo})
$$\mathrm{Ch}\colon K_0(X)_{\numerical}\xra{\sim}N^0(X)_{\numerical}\oplus N^1(X)_{\numerical}\oplus N^2(X)_{\numerical}\cong \Q\oplus \Pic(X)_{\numerical}\oplus\Q.$$
This map is $G$-equivariant and the action on both summands $\Q$ is trivial. Therefore we have $\rank(K_0(X)_{\numerical}^G)=\rank(\Pic(X)_{\numerical}^G)+2$. For the projective curve $C$ we have: 
$K_0(C)_{\numerical}\cong \Q^2$ (rank and degree form a dual basis in this space) and $\Pic(C)_{\numerical}:=\Pic(C)_\Q/\Pic_0(C)_\Q\cong \Q$ (with degree forming  a dual basis). The group $G$ acts trivially on these spaces, so we get $\rank(K_0(C)_{\numerical}^G)=2$, $\rank(\Pic(C)_{\numerical}^G)=1$.
Note that $\Pic_0(X)= \Pic_0(C)$, so there is an exact sequence
$$0\to \Pic(C)_{\numerical}^G\to \Pic(X)_{\numerical}^G\to  \Pic(X/C)_{\Q}^G\to 0,$$
from which we get $\rank(\Pic(X)_{\numerical}^G)=1+1=2$. Hence $\rank(K_0(X)_{\numerical}^G)=2+2=4$.
The semi-orthogonal decomposition~\eqref{eq_SOD-conic-bundle} yields an isomorphism 
$K_0(X)_{\numerical}^G\cong K_0(\bA_{X/C})_{\numerical}^G\oplus K_0(C)_{\numerical}^G$,
and we conclude that $ \rank(K_0(\bA_{X/C})_{\numerical}^G)=2$.

Let $C_1\subset C$ be the finite set of closed points parametrising degenerate conics (i.e. reducible fibres of $f$). Then $X$ is the blow-up of $n=|C_1|$ points in fibres of a $\P^1$-bundle over $C$. If $C$ is rational, it follows from the assumption $K_X^2\le 4$ that $n\ge 4$. 

Consider the weighted projective curve $\P$ obtained from $C$ by putting weights $2$ at points of $C_1$. Then by~\cite[Th. 4.2, Cor. 3.16]{Kuznetsov2008_Derived-categories-of-quadric-fibrations} there is an equivalence 
$$\bA_{X/C}\cong \Db(\coh \P).$$
Moreover, by construction of \emph{loc.\,cit.} the $G$-action on $\bA_{X/C}$ comes from a $G$-action on $\coh\P$. 
We have $\rank(K_0(\P)_{\numerical}^G)=\rank(K_0(\bA_{X/C})_{\numerical}^G)=2$.

We can apply Proposition~\ref{prop_indecomp-for-P} to finish the proof. Note that our assumptions exclude  the case of a domestic weighted projective line: if $C$ is rational then the number $n$ of weighted points is~$\ge 4$.
\end{proof}

Finally we prove the lemma used in the proof of Proposition~\ref{prop_indecomp-for-P}.
Variants of this lemma are standard; we recall the proof in the form suitable for our situation.
\begin{lemma}
\label{lemma_EF}
    Let $\P$ be a weighted projective line and $(E_1,\ldots, E_n,F_1,\ldots,F_m)$ be an exceptional collection of coherent sheaves on $\P$, where $(E_1,\ldots, E_n)$ and $(F_1,\ldots,F_m)$ are orthogonal blocks. Assume a group of autoequivalences of $\coh\P$ is given such that $\{E_1,\ldots, E_n\}$ and $\{F_1,\ldots,F_m\}$ are orbits. Then either $\Hom(E_i,F_j)=0$ for all $i,j$ or $\Ext^1(E_i,F_j)=0$ for all $i,j$.
\end{lemma}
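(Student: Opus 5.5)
Since the group acts transitively on $\{E_i\}$ and on $\{F_j\}$, the pair of numbers $(\dim\Hom(E_i,F_j),\dim\Ext^1(E_i,F_j))$ is a function on the orbit of $(i,j)$. The plan is to show that for a single pair $(i,j)$ with $\Hom(E_i,F_j)\ne 0$ and a single pair $(k,l)$ with $\Ext^1(E_k,F_l)\ne0$ we get a contradiction. The natural tool is the standard fact for hereditary categories: if $(A,B)$ is an exceptional pair of indecomposable sheaves, then $\Hom^\bul(A,B)$ is concentrated in a single degree. Hence each pair $(E_i,F_j)$ individually has either only $\Hom$ or only $\Ext^1$ (or nothing). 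This fact follows from the Happel--Ringel lemma: a nonzero $\Ext^1(B,A)$ between indecomposables with $\Hom(A,B)\neq0$ is impossible. One instead uses the version on $\coh\P$: if $\Hom(A,B)\ne0$ and $\Ext^1(A,B)\ne0$ for exceptional $A,B$, one derives a contradiction via the Euler form. I would just cite this standard lemma for exceptional pairs in hereditary categories.

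The main work is uniformity across different pairs. I would use orthogonality of the blocks. Suppose $\Hom(E_i,F_j)\ne0$ and $\Ext^1(E_k,F_l)\ne0$. Using the group action, move so that $j=l$. This is possible since the group acts transitively on the $F$'s; we may apply an autoequivalence sending $F_l$ to $F_j$, and it permutes the $E$'s. So we have $E_i,E_k$ with $\Hom(E_i,F)\ne 0$ and $\Ext^1(E_k,F)\ne0$ for the same $F$, where $E_i$ and $E_k$ are mutually orthogonal exceptional sheaves. Now consider the direct sum $E=\bigoplus E_i$. The object $E\oplus F$ need not be exceptional. The better idea is to consider the universal extension, or equivalently the mutation: take $F'$ defined by $0\to F\to F'\to E_k^{\oplus e}\to0$, the universal extension with $e=\dim\Ext^1(E_k,F)$. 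Then $F'$ is exceptional, and $\Hom(E_i,F')\supset\Hom(E_i,F)\neq0$ since $\Hom(E_i,E_k)=0$. Also $\Ext^1(E_i,F')$ surjects onto $\Ext^1(E_i,E_k)^{e}=0$. This alone does not yield a contradiction, so a different route is needed.

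A cleaner route is to use rank and degree. Since the group acts by autoequivalences of $\coh\P$, which preserve ranks up to the action, the $E_i$ all have the same rank and the same "slope type," and similarly for the $F_j$. If the $E$'s are torsion then $\Ext^1(E_i,F_j)=\Hom(F_j,\tau E_i)^*$, and $\tau E_i$ is also torsion. If the $F_j$ are vector bundles, then $\Ext^1(E_i,F_j)=0$. If both are torsion, then orthogonality together with exceptionality puts them in tubes, and a direct check in each tube is needed. If the $E_i$ are bundles and $F_j$ are torsion, then $\Ext^1(E_i,F_j)=0$. If both are bundles, I would compare slopes: autoequivalences in $\Aut(\coh\P)$ act on $\L$-twists by $\L$, which contains degree-shifting elements, so slopes are not preserved. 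Instead I would use the Serre-duality reformulation $\Ext^1(E_i,F_j)=\Hom(F_j,\tau E_i)^*$ and the group-invariant quantity $\chi(E_i,F_j)$, which is constant on the orbit of $(i,j)$ only if the group acts transitively on pairs. That need not hold. The main obstacle is therefore the bundle–bundle case, where I expect to argue via stability: exceptional bundles on $\P$ are stable, and $\Hom(E_i,F_j)\ne0$ forces $\mu(E_i)<\mu(F_j)$, whereas $\Ext^1(E_k,F_l)\ne0$ forces $\mu(F_l)<\mu(\tau E_k)=\mu(E_k)+\deg\bar\omega$, with differences controlled by the group action preserving the partition into blocks.
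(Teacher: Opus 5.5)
Your reformulation (one pair with $\Hom\neq0$, another with $\Ext^1\neq0$) is correct, and so is the per-pair fact. But the whole content of the lemma is how \emph{different} pairs interact, and none of your routes handles that.

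You rightly abandon the universal extension of $F$ by a single $E_k$: mutating through one $E_k$ says nothing about maps from the other $E_i$.

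The case analysis has three problems.
\begin{enumerate}
\item One case is misstated. If the $E_i$ are torsion and the $F_j$ are bundles, it is $\Hom(E_i,F_j)$ that vanishes; $\Ext^1(E_i,F_j)\cong\Hom(F_j,\tau E_i)^*$ need not.
\item The torsion--torsion case is left as an unperformed check.
\item The bundle--bundle case, which is the essential one, rests on stability. Stability of exceptional bundles is not available on wild weighted projective lines. For weights $(2,3,7)$, the middle term of the almost split sequence $0\to\cO(\bar\omega)\to E\to\cO\to0$ is exceptional but destabilised by $\cO(\bar\omega)$.
\end{enumerate}
Even granting stability, and a common slope $\mu_E$ on the $E$-block and $\mu_F$ on the $F$-block, your inequalities $\mu_E<\mu_F$ and $\mu_F\le\mu_E+\deg\bar\omega$ are compatible as soon as $\deg\bar\omega>0$. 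So slopes cannot give the contradiction in the wild case. That is exactly the case where the lemma is needed, including type $(2,2,2,2,2)$.

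The missing idea is to mutate through the \emph{whole} orthogonal block and use indecomposability of the result, not dimension counts. The left mutation $\tF_1$ of $F_1$ through $(E_1,\ldots,E_n)$ is the cone of the evaluation map $\bigoplus_i\Hom^\bullet(E_i,F_1)\otimes E_i\to F_1$. It is exceptional, hence indecomposable, hence a shifted sheaf because $\coh\P$ is hereditary. Its cohomology sequence is
\[
0\to\rH^{-1}(\tF_1)\to\bigoplus_i\Hom(E_i,F_1)\otimes E_i\to F_1\to\rH^0(\tF_1)\to\bigoplus_i\Ext^1(E_i,F_1)\otimes E_i\to0 .
\]
If $\Ext^1(E_1,F_1)\neq0$, then $\rH^0(\tF_1)\neq0$, so $\rH^{-1}(\tF_1)=0$. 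Hence every nonzero map $E_i\to F_1$, for \emph{every} $i$, is a monomorphism. This is how one pair with $\Ext^1\neq0$ controls the pairs with $\Hom\neq0$.

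Dually, the right mutation of $E_1$ through $(F_1,\ldots,F_m)$ shows that every nonzero map $E_1\to F_j$ is an epimorphism. The group acts transitively on each block by autoequivalences of $\coh\P$, which preserve monomorphisms and epimorphisms. So both statements extend to all pairs, and any nonzero map $E_i\to F_j$ would be an isomorphism. That contradicts $\Hom^\bullet(F_j,E_i)=0$.

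This is the paper's proof. It needs neither stability nor a torsion/bundle case split, and for $n=m=1$ it is also the proof of the per-pair fact you cite.
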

\begin{proof}
    Assume that $\Ext^1(E_i,F_j)\ne 0$ for some $i,j$; without loss of generality we may suppose that $i=j=1$. Consider the left mutation $\tF_1$ of $F_1$ through $(E_1,\ldots, E_n)$ (see \cite{BondalKapranov_Representable-functors-Serre-functors-mutations}), defined  by the exact triangle
    $$\oplus_i(\Hom^\bullet(E_i,F_1)\otimes E_i)\to F_1\to \tF_1\to.$$ Since the object $\tF_1$ is indecomposable, it has non-zero cohomology  exactly in one degree. The long exact sequence of cohomology sheaves associated with the above triangle is
    $$0\to \rH^{-1}(\tF_1)\to \oplus (\Hom(E_i,F_1)\otimes E_i)\to F_1\to \rH^0(\tF_1)\to \oplus (\Ext^1(E_i,F_1)\otimes E_i)\to 0.$$
    Since $\Ext^1(E_1,F_1)\ne 0$ it follows that $\rH^0(\tF_1)\ne 0$ and hence $\rH^{-1}(\tF_1)=0$. Therefore the evaluation  map $\oplus (\Hom(E_i,F_1)\otimes E_i)\to F_1$ is injective and hence any non-zero map $E_i\to F_1$ is also injective.

    From the group symmetry it follows that any non-zero map $E_i\to F_j$ is injective for all $i,j$. Similarly one shows, using a right mutation, that any non-zero map $E_i\to F_j$ is surjective for all $i,j$. Hence any non-zero map $E_i\to F_j$ must be an isomorphism and therefore cannot exist.    
\end{proof}

\bibliography{biblio_main}
\bibliographystyle{amsalpha}

\end{document}